\newcommand{\stkout}[1]{\ifmmode\text{\sout{\ensuremath{#1}}}\else\sout{#1}\fi}
\theoremstyle{plain}
\newtheorem*{assumption*}{\protect\assumptionname}
\theoremstyle{remark}
\newtheorem*{remark*}{\protect\remarkname}
\theoremstyle{plain}
\newtheorem{remark}{\protect\remarkname}
\theoremstyle{plain}
\newtheorem{theorem}{\protect\theoremname}
\theoremstyle{definition}
\newtheorem{definition}{\protect\definitionname}
\theoremstyle{plain}
\newtheorem{assumption}{\protect\assumptionname}
\theoremstyle{plain}
\newtheorem{proposition}{\protect\propositionname}
\theoremstyle{plain}
\newtheorem{lemma}{\protect\lemmaname}
\theoremstyle{plain}
\newtheorem{corollary}{\protect\corollaryname}
\providecommand{\assumptionname}{Assumption}
\providecommand{\corollaryname}{Corollary}
\providecommand{\definitionname}{Definition}
\providecommand{\lemmaname}{Lemma}
\providecommand{\propositionname}{Proposition}
\providecommand{\remarkname}{Remark}
\providecommand{\theoremname}{Theorem}
\numberwithin{equation}{section}
\newenvironment{keywords}{
  \par\noindent
  \textbf{Keywords:}
}{
  \par
}
\newif\ifsiam
\title{Optimal Primal-Dual Algorithm with Last iterate Convergence Guarantees for Stochastic Convex Optimization Problems 

}
\author{
Digvijay Boob\thanks{dboob@smu.edu, Operations Research and Engineering Management, Southern Methodist University}
\hspace{6em}
Mohammad Khalafi\thanks{mohamadk@smu.edu, Operations Research and Engineering Management, Southern Methodist University}
}
\date{}
\begin{document}
\allowdisplaybreaks
\global\long\def\vertiii#1{\left\vert \kern-0.25ex  \left\vert \kern-0.25ex  \left\vert #1\right\vert \kern-0.25ex  \right\vert \kern-0.25ex  \right\vert }%
\global\long\def\matr#1{\bm{#1}}%
\global\long\def\til#1{\tilde{#1}}%
\global\long\def\wt#1{\widetilde{#1}}%
\global\long\def\wh#1{\widehat{#1}}%
\global\long\def\wb#1{\widebar{#1}}%
\global\long\def\mcal#1{\mathcal{#1}}%
\global\long\def\mbb#1{\mathbb{#1}}%
\global\long\def\mtt#1{\mathtt{#1}}%
\global\long\def\ttt#1{\texttt{#1}}%
\global\long\def\inner#1#2{\langle#1,#2\rangle}%
\global\long\def\inter#1{\text{int }#1}%
\global\long\def\rinter#1{\text{rint }#1}%
\global\long\def\binner#1#2{\big\langle#1,#2\big\rangle}%
\global\long\def\Binner#1#2{\Big\langle#1,#2\Big\rangle}%
\global\long\def\br#1{\left(#1\right)}%
\global\long\def\bignorm#1{\bigl\Vert#1\bigr\Vert}%
\global\long\def\Bignorm#1{\Bigl\Vert#1\Bigr\Vert}%
\global\long\def\setnorm#1{\Vert#1\Vert_{-}}%
\global\long\def\rmn#1#2{\mathbb{R}^{#1\times#2}}%
\global\long\def\deri#1#2{\frac{d#1}{d#2}}%
\global\long\def\pderi#1#2{\frac{\partial#1}{\partial#2}}%
\global\long\def\onebf{\mathbf{1}}%
\global\long\def\zero{\mathbf{0}}%

\global\long\def\norm#1{\lVert#1\rVert}%
\global\long\def\bnorm#1{\big\Vert#1\big\Vert}%
\global\long\def\Bnorm#1{\Big\Vert#1\Big\Vert}%

\global\long\def\brbra#1{\big(#1\big)}%
\global\long\def\Brbra#1{\Big(#1\Big)}%
\global\long\def\rbra#1{(#1)}%
\global\long\def\sbra#1{[#1]}%
\global\long\def\bsbra#1{\big[#1\big]}%
\global\long\def\Bsbra#1{\Big[#1\Big]}%
\global\long\def\cbra#1{\{#1\}}%
\global\long\def\bcbra#1{\big\{#1\big\}}%
\global\long\def\Bcbra#1{\Big\{#1\Big\}}%

\global\long\def\grad{\nabla}%
\global\long\def\Expe{\mathbb{E}}%
\global\long\def\rank{\text{rank}}%
\global\long\def\range{\text{range}}%
\global\long\def\diam{\text{diam}}%
\global\long\def\epi{\text{epi }}%
\global\long\def\inte{\operatornamewithlimits{int}}%
\global\long\def\cov{\text{Cov}}%
\global\long\def\argmin{\operatornamewithlimits{argmin}}%
\global\long\def\argmax{\operatornamewithlimits{argmax}}%
\global\long\def\tr{\operatornamewithlimits{tr}}%
\global\long\def\dis{\operatornamewithlimits{dist}}%
\global\long\def\sign{\operatornamewithlimits{sign}}%
\global\long\def\prob{\mathbb{P}}%
\global\long\def\st{\operatornamewithlimits{s.t.}}%
\global\long\def\dom{\text{dom}}%
\global\long\def\diag{\text{diag}}%
\global\long\def\and{\text{and}}%
\global\long\def\st{\text{s.t.}}%
\global\long\def\Var{\operatornamewithlimits{Var}}%
\global\long\def\raw{\rightarrow}%
\global\long\def\law{\leftarrow}%
\global\long\def\Raw{\Rightarrow}%
\global\long\def\Law{\Leftarrow}%
\global\long\def\vep{\varepsilon}%
\global\long\def\dom{\operatornamewithlimits{dom}}%

\global\long\def\Lbf{\mathbf{L}}%

\global\long\def\Ffrak{\mathfrak{F}}%
\global\long\def\Gfrak{\mathfrak{G}}%
\global\long\def\gfrak{\mathfrak{g}}%
\global\long\def\sfrak{\mathfrak{s}}%
\global\long\def\vfrak{\mathfrak{v}}%
\global\long\def\xibar{\bar{\xi}}%
\global\long\def\Cbb{\mathbb{C}}%
\global\long\def\Ebb{\mathbb{E}}%
\global\long\def\Fbb{\mathbb{F}}%
\global\long\def\Nbb{\mathbb{N}}%
\global\long\def\Rbb{\mathbb{R}}%
\global\long\def\extR{\widebar{\mathbb{R}}}%
\global\long\def\Pbb{\mathbb{P}}%
\global\long\def\Acal{\mathcal{A}}%
\global\long\def\Bcal{\mathcal{B}}%
\global\long\def\Ccal{\mathcal{C}}%
\global\long\def\Dcal{\mathcal{D}}%
\global\long\def\Fcal{\mathcal{F}}%
\global\long\def\Gcal{\mathcal{G}}%
\global\long\def\Hcal{\mathcal{H}}%
\global\long\def\Ical{\mathcal{I}}%
\global\long\def\Kcal{\mathcal{K}}%
\global\long\def\Lcal{\mathcal{L}}%
\global\long\def\Mcal{\mathcal{M}}%
\global\long\def\Ncal{\mathcal{N}}%
\global\long\def\Ocal{\mathcal{O}}%
\global\long\def\Pcal{\mathcal{P}}%
\global\long\def\Ucal{\mathcal{U}}%
\global\long\def\Scal{\mathcal{S}}%
\global\long\def\Tcal{\mathcal{T}}%
\global\long\def\Xcal{\mathcal{X}}%
\global\long\def\Ycal{\mathcal{Y}}%
\global\long\def\Ubf{\mathbf{U}}%
\global\long\def\bt{\mathbf{t}}
\global\long\def\bx{\mathbb{x}}
\global\long\def\Pbf{\mathbf{P}}%
\global\long\def\Ibf{\mathbf{I}}%
\global\long\def\Ebf{\mathbf{E}}%
\global\long\def\Abs{\boldsymbol{A}}%
\global\long\def\Qbs{\boldsymbol{Q}}%
\global\long\def\Lbs{\boldsymbol{L}}%
\global\long\def\Pbs{\boldsymbol{P}}%
\newcommand{\proj}{\textbf{proj}}
\newcommand{\prox}{\textbf{prox}}
\newcommand{\proximal}{\text{prox}}
\def\bx{\mathbf{x}}
\global\long\def\i{i}%
\global\long\def\Ibb{\mathbb{I}}

\DeclarePairedDelimiterX{\inprod}[2]{\langle}{\rangle}{#1, #2}
\DeclarePairedDelimiter\abs{\lvert}{\rvert}
\DeclarePairedDelimiter{\bracket}{ [ }{ ] }
\DeclarePairedDelimiter{\paran}{(}{)}
\DeclarePairedDelimiter{\braces}{\lbrace}{\rbrace}
\DeclarePairedDelimiterX{\gnorm}[3]{\lVert}{\rVert_{#2}^{#3}}{#1}
\DeclarePairedDelimiter{\floor}{\lfloor}{\rfloor}
\DeclarePairedDelimiter{\ceil}{\lceil}{\rceil}

\global\long\def\tsum{{\textstyle {\sum}}}

\newcommand{\opconex}{\texttt{OpConEx}}
\newcommand{\sopconex}{\texttt{S-OpConEx}}
\newcommand{\stochsopconex}{\texttt{S-StOpConEx}}
\newcommand{\adopconex}{\texttt{AdLagEx}}
\newcommand{\gconex}{\texttt{GradConEx}}
\newcommand{\stopconex}{\texttt{StOpConEx}}
\newcommand{\fstopconex}{\texttt{F-StOpConEx}}
\newcommand{\aconex}{\texttt{Aug-ConEx}}
\newcommand{\augconex}{\text{Aug-ConEx}}
\maketitle

\begin{abstract}
This paper proposes a novel first-order algorithm that solves composite nonsmooth and stochastic convex optimization problem with function constraints. 
Most of the works in the literature provide convergence rate guarantees on the average-iterate solution. There is growing interest in the convergence guarantees of the last iterate solution due to its favorable structural properties, such as sparsity
or privacy guarantees and good performance in practice. We provide the first method that obtains the best-known convergence rate guarantees on the last iterate for stochastic composite nonsmooth convex function-constrained optimization problems. Our novel and easy-to-implement algorithm is based on the augmented Lagrangian technique and uses a new linearized approximation of constraint functions, leading to its name, the Augmented Constraint Extrapolation ($\augconex$) method. We show that \augconex~achieves $\mathcal{O}(1/\sqrt{K})$ convergence rate in the nonsmooth stochastic setting without any strong convexity assumption and $\mathcal{O}(1/K)$ for the same problem with strongly convex objective function. While optimal for nonsmooth and stochastic problems, the \augconex~method also accelerates convergence in terms of Lipschitz smoothness constants to $\mathcal{O}(1/K)$ and $\mathcal{O}(1/K^2)$ in the aforementioned cases, respectively. To our best knowledge, this is the first method to obtain such differentiated
convergence rate guarantees on the last iterate for a composite nonsmooth stochastic setting without
additional $\log{K}$ factors. We validate the efficiency of our algorithm by comparing it with a state-of-the-art algorithm through numerical experiments. 
\vspace{1em}
\end{abstract}

\begin{keywords}
    Last-iterate convergence, Convex function-constrained problems, Primal-dual methods, Stochastic methods, Nonsmooth composite problems.
\end{keywords}
\vspace{-1mm}
\section{Introduction}
We are interested in solving a convex function-constrained optimization problem
\begin{equation}\label{eq:prima-problem}
\begin{split}
    \min_{x\in X}\ \ &\psi_0(x) := f(x)+ \chi_0(x)\\
    \text{s.t.}\ \  &\psi_i (x):= g_i(x)+\chi_i(x)\leq 0 \quad \forall i  = 1,\cdots, m, 
\end{split}
\end{equation}
where $f:X\rightarrow \mathbb{R}$ and $g\added{_i}:X\rightarrow \mathbb{R}, {i = 1, \dots, m,}$ are either convex or strongly convex functions and $X$ is convex compact set. Moreover, $\chi_0$ is convex proper lower-semicontinuous function and $\chi_i = 1, \dots, m$ are continuous convex function. For ease of exposition, we use the notation $g$, $\chi$ and $\psi$ to denote the vector valued mapping associated with functions $g_i, \chi_i$ and $\psi_i$,  $i =1, \dots, m$, respectively. We allow both $f: X\rightarrow \mathbb{R}$ and $g: X\rightarrow \mathbb{R}^m $ to be {\em composite nonsmooth, i.e.,} $f$ and each component of $g$ are sum of smooth and nonsmooth functions. We also assume that $\chi_i$ for all $i = 0,1,\dots, m$ are ``simple'' functions in the sense that, for any fixed vector $v\in \mathbb{R}^n$ and non-negative weight vector $w\in \mathbb{R}^m$,  a certain proximal operator associated with the function $\chi_0(\cdot) + \sum_{i=1}^m w_i\chi_i(\cdot) + \inner{v}{\cdot}$ can be computed efficiently. We also let $f$ to be an expectation function $f(x) = \Ebb_\xi[\Ffrak(x, \xi)]$ where $\xi$ is the random data input to $\Ffrak$. We assume access to the first-order information of $f$ through a stochastic oracle which is unbiased and has bounded variance. Our main goal is to present easily implementable first-order methods with convergence rate guarantees on the last iterate for the composite nonsmooth stochastic convex optimization problem \eqref{eq:prima-problem}.

The constrained optimization problem of this form is often employed to impose a certain structure on the output solution. For example, in the field of sparsity-inducing optimization, one can use $\chi_0(x) = \gnorm{x}{1}{}$ whereas the constraint $\{x: g(x) \le 0\}$ enforces other operational requirements on $x$. One specific example of this type of problem includes sparse portfolio allocation, where $g$ can model the allocation of appropriate market capital to each sector that satisfies the real-world minimum and maximum limits imposed by legal requirements, risk constraints based on the investor's choice, and the normalization of the total allocation constraint. Note that the sparsity of the solution is an important metric for consolidating the number of investment assets.

There is a substantial amount of literature on solving convex function-constrained optimization. One approach uses the exact penalty, quadratic penalty, and augmented Lagrangian-based algorithms \cite{bertsekas99nonlinear, lan2013iteration, lan2016iteration, xu2021iteration, xu2020primal, li2021inexact}. A vast majority of this research typically applies to smooth and deterministic problems. Moreover, some methods involve solving complex penalty subproblems, leading to a two-loop algorithm structure with difficult subproblems to solve. The penalty subproblem can be further complicated if $f$ or $g$ have nonsmooth components, imposing high costs to retrieve their solution even when employing accelerated first-order methods. Hence, most works analyzing the total complexity of penalty-based methods deal with smooth functions $f$ and $g$ and assume that $\chi_i (x)  = 0$ for all $i =1,\dots,m$. Moreover, the works on stochastic penalty-based methods are relatively sparse \cite{xu2020primal}. Such consideration severely limits the applicability of penalty-based methods. However, if such restrictions are not an issue, a recent result shows that an augmented Lagrangian method converges at almost the same rate as unconstrained problems when the number of constraints is constant \cite{xu2022first}.

Another body of literature focuses on primal methods, such as cooperative subgradient \cite{Pol67, lan2020algorithms}, iterative regularization \cite{solodov2007explicit, helou2017e, kaushik2021method, samadi2023improved}, level-set methods \cite{lemarechal1995new, nesterov2018lectures, lin2018level, aravkin2019level} and moving ball type methods \cite{auslender2010moving, auslender2013very, nesterov2018lectures, boob2024level, singh2024stochastic} to solve \eqref{eq:prima-problem}. The level-set-based first-order methods require a root-finding procedure of a value function that composes $\psi_0$ and $\psi_i$ inside a max-function. Evaluation of such a value function is complicated when $f$ is a stochastic function. Hence, most works in the literature study the root-finding procedure for deterministic problems. Even in the deterministic setting, the root-finding procedure immediately leads to a complicated two-loop method with complex subproblems. Cooperative subgradient methods are easy-to-implement single-loop methods. However, they treat $f$ and $g$ as black-box nonsmooth functions and cannot directly accelerate the convergence in terms of their smooth component. Furthermore, the final convergence guarantee is provided for the average iterate only. The iterative regularization framework allows for only one constraint. Under this restriction, this framework places the constraint into the objective while introducing an iteration-dependent weight on the objective function. This revised objective does not have function constraints and can be solved using a single-loop method for unconstrained problems. The weighing scheme is changed systematically in every iteration to get convergence. However, such methods converge on the average iterate and suffer substantially in convergence rates compared to their unconstrained counterparts because of the constantly changing weight. Hence, they are suitable for ill-conditioned problems such as convex bilevel optimization or VI-constrained problems. Moving ball type methods are only applicable for problems smooth $f$ and $g$ since they require quadratic approximation of functions $f$ and $g$ to be tight. Moreover, these methods generate a QCQP subproblem, which requires implementing fast interior point type methods in the inner loop \cite{boob2024level}, giving rise to a two-loop scheme. More importantly, this approach does not work when the problem is a smooth composite, i.e., $f$ and $g$ are purely smooth and nonsmooth prox-friendly functions $\chi_i, i =0, \dots, m$ are present.

A major line of research focuses on primal-dual methods for smooth min-max saddle point problems independent of  \eqref{eq:prima-problem} \cite{Nemirovski2004, Nesterov2007, chambolle2011first, chen2014optimal, chambolle2016ergodic, hamedani2021primal, thekumparampil2022lifted, khalafi2023accelerated}. Note that if the strong-duality holds\footnote{Under the mild Slater assumption, one can show that strong duality holds \cite{bertsekas99nonlinear}}, one can reformulate \eqref{eq:prima-problem} into an equivalent saddle point problem which can be solved using primal-dual methods for non-bilinear coupling functions. However, the resulting saddle point problem is no longer uniformly smooth due to the unbounded domain of the dual variable. Hence, primal-dual methods require specialized and careful treatment for function-constrained problems. Such methods are considered in \cite{nedic2009approximate, yu2016primal, hamedani2021primal, boob2023stochastic, zhu2022new, zhang2022solving}. The majority of recently developed primal-dual methods are simple single-loop methods. They deal with the aforementioned problem of the unbounded set of dual variables in different ways. In particular, \cite{hamedani2021primal, zhu2022new} resolve this issue in the deterministic and smooth convex problems by adaptively searching for the step size and showing an explicit bound on the dual variable that holds throughout the run of the adaptive algorithm. \cite{zhang2022solving} implements a guess-and-check procedure for smooth and deterministic problems. Such techniques do not apply to nonsmooth or stochastic problems like those considered in this paper. Identifying this issue, \cite{boob2023stochastic} proposes a constraint extrapolation type design that directly deals with the unbounded dual set. Using this approach, they could address problem \eqref{eq:prima-problem}  in the composite nonsmooth stochastic setting as considered in this paper. However, the convergence result of almost all simple primal-dual methods is in terms of the average iterate, which destroys the desirable structural properties of the output solution. The only exception is \cite{zhu2022new}, who prove the last iterate convergence in the limited scope of smooth and deterministic problem \eqref{eq:prima-problem}. There also exists substantial recent interest in obtaining convergence rate guarantees for (stochastic) nonconvex optimization with function constraints. We refer the reader to \cite{boob2023stochastic, cartis2011evaluation,kong2019complexity, li2021augmented, lin2022complexity, boob2024level} and references therein. The best-known convergence rate guarantee to produce an $\epsilon$-KKT point is of $\mathcal{O}(\tfrac{1}{\sqrt{K}})$ under a well-known constraint qualification assumption \cite{boob2024level}. In this context, $K$ is the number of gradient evaluations.

There is a growing interest in algorithms converging in the last iterate for convex optimization problem \eqref{eq:prima-problem} without function constraints $\psi_i(x) \le 0,  i=1, \dots, m$. For such problems, when the objective is strongly convex and smooth with respect to the optimal solution, \cite{rakhlin2011making} shows a convergence rate of $\mathcal{O}(\frac{1}{K})$ where $K$ is the number of iterations of stochastic gradient descent. Note that the smoothness here requires existence of $L$ such that $f(x) -f(x^*) \le \frac{L}{2}\gnorm{x-x^*}{}{2}$ which is a stronger condition than the Lipschitz smoothness.  \cite{shamir2013stochastic, harvey2019tight} provide convergence rate of $\mathcal{O}(\frac{\log{K}}{\sqrt{K}})$ and $\mathcal{O}(\frac{\log{K}}{K})$  for nonsmooth stochastic convex and strongly convex problem, respectively. This result is almost optimal except for the presence of $\log{K}$ factor in the convergence rate. \cite{jain2021making} show that stochastic gradient descent can be optimal by appropriately modifying the step size scheme. In particular, they get the convergence rate of 
$\mathcal{O}(\frac{1}{\sqrt{K}})$ and $\mathcal{O}(\frac{1}{K})$ for nonsmooth stochastic convex and strongly convex optimization problems, respectively. The results of \cite{shamir2013stochastic, harvey2019tight, jain2021making} work for pure nonsmooth problems without function constraints, i.e., $f$ is a nonsmooth function and $\chi_0(x) = \psi_i (x) = 0$ for all $i =1, \dots, m$. Very recently, \cite{liu2023revisiting} shows such results can be extended to composite nonsmooth problems while maintaining the optimal convergence rate guarantees of \cite{jain2021making} for both convex and strongly convex problems. They also showed that if the stepsizes are properly selected, one can accelerate the convergence rate in the smooth component of $f$. It is also worth mentioning the recent progress on incremental gradient-type methods with convergence on the last-iterate \cite{cai2024last}. 
A substantial limitation of this entire body of literature on the last iterate convergence is that they all require projection onto the constraint set. This assumption may be challenging to satisfy when complicated function constraints such as $\psi_i(x) \le 0$ in \eqref{eq:prima-problem} are present. 

Many of the studies mentioned earlier for the last iterate convergence are motivated by the fact that the usual information-theoretically optimal convergence guarantees for convex and strongly convex problems \cite{polyak1992acceleration, nemirovskij1983problem, zinkevich2003online, deng2018optimal} hold for some average iterate. On the other hand, it is common in practice to consider the final iterate as the solution \cite{shalev2007pegasos, shamir2012averaging}. In recent years, many works have shown other hidden benefits of last-iterate convergence on differential privacy. In particular, instead of revealing the entire history of iterates visited through the average iterate, if one only reveals the last iterate obtained through stochastic gradient type methods, then it leads to substantially stronger differential privacy guarantees \cite{feldman2018privacy, altschuler2022privacy, asoodeh2023privacy}. 

Clearly, there is a significant amount of literature on the two independent streams of research: the convex function constrained optimization and the last iterate convergence of first-order methods. Despite such vital developments, to the best of our knowledge, no algorithm in the literature provides convergence rate guarantees in the last iteration for stochastic composite-nonsmooth convex function-constrained optimization problems. We intend to contribute to this area in this paper. In particular, we present an Augmented Constraint Extrapolation (\augconex) method. This method aims to solve a min-min-max problem associated with a specific Lagrangian formulation of \eqref{eq:prima-problem} that introduces primal slack variables in the constraint. We also augment a quadratic penalty of the resulting constraint function inside the Lagrangian formulation. Finally, to deal with unbounded Lagrange multipliers and slack variables, we introduce a novel linearization scheme of the constraint functions $g_i, i =1, \dots, m$, used inside an extrapolation term. Hence, we call the resulting algorithm the Augmented Constraint Extrapolation method. Apart from the novel design of this algorithm, which may be of independent interest, this method exhibits convergence guarantees in the last iterate that matches the corresponding guarantees in the average iterate for problem \eqref{eq:prima-problem} or the convex composite-nonsmooth problems without function constraints. Note that the composite-nonsmooth function class contains Hölder-smooth functions as a specific subclass \cite[Proposition 2.1]{liang2024unified}. Hence, our setting strictly generalizes pure smooth, Hölder smooth, and pure nonsmooth problems - covering a wide range of problems.  

Table \ref{tab:comparison} provides a brief summary of the aforementioned results from the literature and puts our contribution in the context. Throughout this paper, we follow the well-known criterion of convergence for problem \eqref{eq:prima-problem} which requires both the optimality gap and infeasibility of the candidate solution to be small simultaneously. In particular, when the objective is strongly convex, \augconex~ method converges at the rate of $\mathcal{O}(\tfrac{1}{K})$ (i.e., optimality gap and infeasibility are of $\mathcal{O}(\tfrac{1}{K})$), where $K$ is the number of stochastic gradients of $f$, and number of gradients of each $g_i$. It is worth noting that this $\mathcal{O}(\tfrac{1}{K})$ term is due to nonsmooth components of functions $f$ and $g$, as well as errors associated with the stochastic oracle of $f$. The method exhibits the accelerated $\mathcal{O}(\frac{1}{K^2})$ convergence in terms of smooth components of the problem.  When the problem is convex, the \augconex~method converges at the rate of $\mathcal{O}(\tfrac{1}{\sqrt{K}})$ in terms of nonsmooth components of $f$ and $g$, as well as stochastic errors due to $f$. In terms of smooth components, the method exhibits accelerated convergence rate of $\mathcal{O}(\tfrac{1}{K^2})$. This type of composite convergence rate results of the last iterate are state-of-the-art even when compared with literature on convex problems without function constraints. Indeed, \cite{liu2023revisiting} have dependence on $\log{K}$-factor in the convergence rate when dealing with composite nonsmooth problems. 
\begin{table}[h]
        \centering
        \begin{tabular}{c|c|c|c|c|c|c}
            \hline
             \multirow{3}{1.7cm}{Algorithms}& \multicolumn{4}{c|}{Setting}&\multicolumn{2}{c}{Convergence rate}\\
            \cline{2-7}
             & \multirow{2}{*}{Stochastic} & \multirow{2}{*}{Last-iterate} & Composite-  &Function- & \multirow{2}{*}{Convex} & Strongly \\
             & & &nonsmooth$^\ast$ &constraint & &convex\\
            \hline
            \multirow{2}{*}{ConEx\cite{boob2023stochastic}}  &\multirow{2}{*}{$\checkmark$ }& \multirow{2}{*}{$\times$} & \multirow{2}{*}{$\checkmark$ } & \multirow{2}{*}{$\checkmark$} & \multirow{2}{*}{$\mathcal{O}\big(\frac{1}{\sqrt{K}}\big)$}& \multirow{2}{*}{$\mathcal{O}\big(\frac{1}{K}\big)$}\\
            &&&&&& \\\hline
            Primal- &\multirow{2}{*}{$\times$} & \multirow{2}{*}{$\checkmark$} & \multirow{2}{*}{$\times$} &\multirow{2}{*}{$\checkmark$} & \multirow{2}{*}{$\mathcal{O}\big(\frac{1}{K}\big)$} &\multirow{2}{*}{$\mathcal{O}\big( \frac{1}{K^2}\big)$}\\
            Dual \cite{zhu2022new} & & & & & & \\\hline
            Stochastic & \multirow{2}{*}{$\checkmark$} &\multirow{2}{*}{$\checkmark$} &\multirow{2}{*}{$\times$} &\multirow{2}{*}{$\times$} &\multirow{2}{*}{$\mathcal{O}\big( \frac{1}{\sqrt{K}}\big)$} &\multirow{2}{*}{$ \mathcal{O} \big( \frac{1}{K}\big)^\dagger$}\\
            Gradient \cite{jain2021making} & & & & & &\\ \hline
            Stochastic & \multirow{2}{*}{$\checkmark$} &\multirow{2}{*}{$\checkmark$} &\multirow{2}{*}{$\checkmark$} &\multirow{2}{*}{$\times$} &\multirow{2}{*}{$\mathcal{O}\big( \frac{\log{K}}{\sqrt{K}}\big)^\dagger$} &\multirow{2}{*}{$ \mathcal{O} \big( \frac{\log{K}}{K}\big)^\dagger$}\\
            Gradient \cite{liu2023revisiting} & & & & & &\\ \hline
            \multirow{2}{1.7cm}{\centering$\augconex$\\(this work) } & \multirow{2}{*}{$\checkmark$} &\multirow{2}{*}{$\checkmark$} &\multirow{2}{*}{$\checkmark$} &\multirow{2}{*}{$\checkmark$} &\multirow{2}{*}{$\mathcal{O}\big( \frac{1}{\sqrt{K}}\big)$} &\multirow{2}{*}{$ \mathcal{O} \big( \frac{1}{K}\big)$}\\
            & & & & & & \\ \hline
        \end{tabular}
        \caption{
        Comparison of {state-of-the-art} algorithms with $\augconex$ method. $K$ denotes the number of (stochastic) gradient evaluations of $f$ and $g$. \\
        $\ast$ Not being composite nonsmooth refers to the problem being purely smooth.\\
        $\dagger$ The $\log{K}$ factor in the convergence rate can be removed in the purely nonsmooth case.
        }
        \label{tab:comparison}
    \end{table}
 \subsection{Outline} 
 In Section \ref{sec:notation}. we describe notation and terminologies. Section \ref{sec:formulation} discusses the preliminaries and fundamental assumptions. Section \ref{sec:algorithm} comprehensively illustrates the $\augconex$ method for solving problem \eqref{eq:prima-problem}. Section \ref{sec:convegence-analysis-augconex} presents unified convergence analysis of $\augconex$ method in both convex and strongly convex cases. The numerical experiments are shown in Section \ref{sec:numerical} and concluding remarks are mentioned in Section~\ref{sec:conclusion}. 
\subsection{Notation and Terminologies}\label{sec:notation}
In this paper, we use the following notations. Let $[m]:=\{1,\dots,m\}, \psi(x):= [\psi_1(x), \dots, \psi_m(x)]^\top, g(x):= [g_1(x),\dots,g_m(x)]^\top$ and $\chi:= [\chi_1(x),\dots,\chi_m(x)]^\top$ and the constraint in \eqref{eq:prima-problem} can be expressed as $\psi(x)\leq \textbf{0}$ where the bold $\textbf{0}$ denotes the vector of elements $0$. Here, the size of the vector is not specified and depends on the context. Also, $\mathbb{R}_+^k$ and $\mathbb{R}_-^k$ represent non-negative and non-positive orthants of a $k$-dimensional Euclidean space, respectively. Euclidean norm is denoted as $\gnorm{\cdot}{}{}$ and the standard inner product is defined as $\inner{\cdot}{\cdot}$. Further, $\gnorm{\cdot}{1}{}$ denote the $\ell_1$ norm. 
Let $\mathcal{B}^2(r):= \{x:\gnorm{x}{}{}\leq r\}$ be the Euclidean ball of radius $r$ centered at origin. For a convex set $X$, we denote the normal cone at a point $x\in X$ as $N_X(x)$. The set of interior points and relative interior points of the set $X$ are defined as $\inter{X}$ and $\rinter{X}$, respectively. $I_X$ denotes the indicator function of the set $X$. For any proper, closed, and convex function $r:\mathbb{R}^k \rightarrow \mathbb{R}\cup \{+\infty\}$ and $\dom(r): = \{x\in \mathbb{R}^k|r(x)<+\infty\} $ as its effective domain, $r^*(y): = \sup_{x}\{\inner{x}{y}-r(x)\}$ denotes its Fenchel conjugate. The $``+"$ operation on sets denotes the Minkowski sum. $[x]_+:= \max\{x,0\} $ and $[x]_- := \min\{x,0\}$ for any $x\in \mathbb{R}$. For any vector $x\in \mathbb{R}^k$, we denote $[x]_+$ and $[x]_-$ as element-wise application of the operator $[\cdot]_+$ and $[\cdot]_-$, respectively. The $i$-th element of vector $x$ is denoted as $x^{i}$. A function $r(\cdot)$ is $\lambda$-\textit{Lipschitz smooth} if the gradient $\grad r(x)$ is $\lambda$-Lipschitz continuous, i.e.,  for some $\lambda\geq 0$, we have 
\[
\gnorm{\grad r(x) - \grad r(y)}{}{} \le \lambda\gnorm{x-y}{}{},\quad \forall x,y\in \dom(r).
\]
This implies
\[
 r(x)-r(y)-\inner{\grad r(y)}{x-y}\leq \tfrac{\lambda}{2}\gnorm{x-y}{}{2}, \quad \forall x,y\in \dom(r).
\]   
In many cases, it is possible that a convex function $ r $ is a combination of Lipschitz smooth and nonsmooth functions. We call such functions as composite-nonsmooth satisfying 
\[r(x)-r(y)-\inner{\grad r(y)}{x-y}\leq \tfrac{\lambda}{2}\gnorm{x-y}{}{2} + \omega \gnorm{x-y}{}{}, \quad \forall x,y\in \dom(r).\]
Here, when $\lambda = 0$, we call $r$ a purely nonsmooth function. Moreover, when $\omega = 0$, it results in the usual purely smooth function $r$. 
We define the diameter of the compact set $X$ by 
\begin{equation}\label{eq:diam}
	D_X: = \max_{x,y\in X}\tfrac{\|x-y\|}{2}.
\end{equation}
For any convex function $ h $, we denote the subdifferential as $\partial h $ as follows: at a point $ x \in \rinter{X} $,  $\partial h $ is comprised of all subgradients $ h' $ of $ h $ at $ x $ which are in the linear span of $ X - X $. For a point $ x \in  X\backslash \rinter{X} $, the set $\partial h(x) $ consists of all vectors $ h' $, if any, such that there exists $ x_i \in  \rinter{X} $ and $ h'_i \in \partial h(x_i), i=1,2,\dots,$ with $x = \lim_{i\rightarrow \infty} x_i, h'  = \lim_{\i\rightarrow \infty }h'_i. $ With this definition, we have the following relation for a $\mathcal{M}$-smooth convex function with respect to norm $\gnorm{\cdot}{}{}$ for any $x\in X$ whose set $\partial h(x)$ is non-empty
\[
h'\in \partial h(x)\Rightarrow \|\inner{h'}{d}\|\leq \mathcal{M}\|d\|,\quad \forall d \in \text{lin}(X-X),
\]   
implying that 
\[
h'\in \partial h(x)\Rightarrow \|h'\|\leq \mathcal{M}.
\]
A function $r(\cdot)$ is $\beta$-strongly convex if 
\begin{equation}\label{eq:strong-convex}
	r(x)\geq r(y) + \inner{r'(y)}{x-y} + \tfrac{\beta}{2}\|x-y\|^2, \forall x,y\in X.
\end{equation}
We denote the strong-convexity parameter of $f$ as $\mu_f \ge 0$. When $\mu_f = 0$, we have that $f$ is merely convex. Otherwise, $f$ is $\mu_f$-strongly convex.

\section{Preliminaries}\label{sec:formulation}
Let $\Lcal(x,y)$ be the associated Lagrangian function of problem \eqref{eq:prima-problem}. Therefore, one can write the corresponding saddle point problem as follows
\begin{equation}\label{eq:sp-problem}
	\min_{x\in X}\max_{y\in \mathbb{R}^m_+}\{ \Lcal(x,y): = \psi_0(x)+ \inner{y}{\psi(x)} \},
\end{equation}
where $\Lcal(x,y)$ is convex in $x$ for any $y\in \mathbb{R}_+^m$ and linear in $y$ for any $x\in X$.
First, we assume that there exist solutions for \eqref{eq:sp-problem} and its corresponding dual problem. In particular, we have the following assumption
\begin{assumption}\label{assump:SPP}
	There exists $(x, y )\in  X\times \mathbb{R}^m_+$  of $\Lcal(x,y)$ such that
	\begin{equation}\label{eq:SPP-assump}
		\Lcal(x^*,y)\leq \Lcal(x^*,y^*)\leq \Lcal(x,y^*) \quad \forall (x,y)\in X\times \mathbb{R}^m_+.
	\end{equation}
	Moreover, we assume $\Lcal(x^*,y^*)$ is finite. 
\end{assumption} 
This assumption is mild and is widely used in the literature on dual-based methods. In particular, the existence of a Slater point is sufficient to ensure the existence of $y^*$\cite{bertsekas99nonlinear}. Existence of $x^*$ follows by the compactness of $X$. Furthermore, due to strong duality, $x^*$ is the optimal solution of \eqref{eq:prima-problem}. Throughout this section, we assume the existence of $y^*$ satisfying \eqref{eq:SPP-assump}. Denote the optimal value $\psi_0^*:= \psi_0(x^*)$. Then the following definition illustrates a commonly used optimality measure for problem \eqref{eq:prima-problem}. 
\begin{definition}\label{def:gaps}
	A point $\widebar{x}\in X $ is called $(\epsilon_o,\epsilon_c)$-optimal solution of the problem \eqref{eq:prima-problem} if 
	\begin{equation*}
		\psi_0(\widebar{x}) - \psi_0^*\leq \epsilon_o\quad \text{and}\quad \gnorm{[\psi(\widebar{x})]_+}{}{}\leq \epsilon_c.
	\end{equation*}
	Consequently, a stochastic $(\epsilon_o,\epsilon_c)$-approximately optimal solution satisfies 
	\begin{equation*}
		\Ebb[\psi_0(\widebar{x}) - \psi_0^*]\leq \epsilon_o\quad \text{and}\quad \Ebb[\gnorm{[\psi(\widebar{x})]_+}{}{}]\leq \epsilon_c.
	\end{equation*}
\end{definition}
Moreover, $f$ can be a stochastic function covering a wide variety of data-driven convex optimization problems. In particular, we assume the objective function $f$ is an expectation function, whose first-order information can be approximated by an unbiased estimator obtained through a stochastic oracle $\mathfrak{F}(x,\xi)$ for any $x \in X$. In particular, we have
\begin{equation}\label{eq:SO_F_objective}
	\forall x \in X, \quad  f'(x) = \Ebb[\Ffrak(x,\xi)], \quad \gnorm{\Ffrak(x, \xi) - f'(x)}{}{2} \le \sigma^2,
\end{equation} 
where $\xi$ is a random variable that indicates the source of uncertainty and is independent of the search point $x$. 
Next, we provide a new Lagrangian function $\Lcal(x,s,y)$ by introducing slack variables $-s^{i},i\in [m]$ to the constraints of problem \eqref{eq:prima-problem} as follows 
\begin{equation}\label{eq:sp-problem-conj}
 \Lcal(x,s,y): = \psi_0(x)+ \inner{y}{\psi(x)-s}. 
\end{equation}
	We explain the reason for defining this function in Section \ref{sec:algorithm} in detail. One immediate result of \eqref{eq:sp-problem-conj} is the following relation 
\begin{equation}\label{eq:upperbound}
	\Lcal(x,y)\leq \Lcal(x,s,y)\quad  \forall (x,s,y)\in X\times  \mathbb{R}_-^m\times  \mathbb{R}_+^m\quad \text{and}\quad 	\Lcal(x,y)= \Lcal(x,0,y)\quad \forall (x,y)\in X\times\mathbb{R}_+^m   .
\end{equation}

As mentioned earlier, we assume that $\chi_i, i\in [m]$ are ``simple'' functions such that for any scalar $\eta >0$, vector $v\in \mathbb{R}^n$ and $w\in \mathbb{R}^m_+$, we can efficiently compute the following \textbf{prox} operator  
	\begin{equation}\label{eq:prox}
		\prox (w,v,\tilde{x},\eta): = \argmin_{x \in X}\{\chi_0(x) + w^\top \chi + \inner{v}{x} + \tfrac{\eta}{2}\gnorm{x-\tilde{x}}{}{2}\}.
	\end{equation}
To avoid any confusion, we define the classical proximal operator of a function $h(\cdot)$ at point $\wb{x}$, $\proximal_h(\wb{x})$ as follows 
	\begin{equation}\label{eq:prox-1}
		\proximal_{h}(\wb{x}) = \argmin_{x}\{h(x)+ \tfrac{1}{2}\gnorm{x-\wb{x}}{}{2}\}.
	\end{equation}
Then, in view of \eqref{eq:prox} and \eqref{eq:prox-1}, it is easy to see that $\prox(w,v,\tilde{x}, \eta) \equiv \proximal_{\frac{1}{\eta}(\chi_0(\cdot) + w^\top \chi(\cdot) + \inprod{v}{\cdot}) + I_X(\cdot)}(\wt{x})$.

\section{The Aug-ConEx Method  } \label{sec:algorithm}
 Before stating the algorithm, let us delve into important details. First, we assume that $f$ and $g_i$, $i\in[m]$  satisfy the following
\begin{equation}\label{eq:smooth-nonsmooth}
	f(x)-f(y) - \inner{f'(y)}{x-y} \le \tfrac{L_f}{2}\gnorm{x-y}{}{2} + H_f\gnorm{x-y}{}{}, \quad \forall x, y\in X,\quad\forall f'(y)\in \partial f(y),
\end{equation}
where $L_f$ and $H_f$ are the smoothness and non smoothness constants of $f$. For constraints, we make a similar assumption as in \eqref{eq:smooth-nonsmooth} 
\begin{equation}\label{eq:sonstraint-smooth-nonsmooth}
	g_i(x)-g_i(y) - \inner{g_i'(y)}{x-y} \le \tfrac{L_{g_i}}{2}\gnorm{x-y}{}{2} + H_{g_i}\gnorm{x-y}{}{}, \quad \forall x, y\in X,\quad\forall g_i'(y)\in \partial g_i(y), i\in [m],
\end{equation}
where $L_{g_i}$ and $H_{g_i}$ are smoothness and non smoothness constants of the functions $g_i$ for all $i\in[m]$. Due to \cite[Proposition 2.1]{liang2024unified}, we note that the class of functions defined in \eqref{eq:smooth-nonsmooth}, \eqref{eq:sonstraint-smooth-nonsmooth} contain Hölder smooth functions as a subclass. Hence, our problem setting contains pure smooth, pure nonsmooth, and Hölder smooth settings as specific cases. In addition, we assume that the constraint functions are Lipschitz continuous. Specifically, we have the following
\begin{subequations}\label{eq:M_g}
	\begin{equation}
		g_i(x)-g_i(y)\leq M_{g_i}\gnorm{x-y}{}{} \quad \forall x,y\in X\quad i\in [m],
	\end{equation}
	\begin{equation}
		\chi_i(x)-\chi_i(y)\leq M_{\chi_i}\gnorm{x-y}{}{} \quad \forall x,y\in X\quad i \in [m].
	\end{equation}
\end{subequations}
The Lipschitz-continuity assumption in \eqref{eq:M_g} is common in the literature when $g_i$, $i\in [m]$, are nonsmooth functions. If $g_i$, $i\in [m]$, are Lipschitz smooth then their gradients are bounded due to the compactness of $X$. Hence \eqref{eq:M_g} is not a strong assumption for the given setting. Also note that due to the definition of subgradient for the convex function in Section \ref{sec:notation}, we have $\gnorm{g'_i(\cdot)}{}{}\leq M_{g_i}$ which implies $\gnorm{g'_i(y)^\top (x-y)}{}{}\leq \gnorm{g'_i(y)}{}{}\gnorm{x-y}{}{}\leq M_{g_i} \gnorm{x-y}{}{}$. Therefore, using the relations in \eqref{eq:sonstraint-smooth-nonsmooth} and \eqref{eq:M_g}, one can state the following assumption for $g = [g_1,\cdots,g_m]^\top$ and $\chi =  [\chi_1,\cdots,\chi_m]^\top$.
\begin{subequations}\label{eq:M_g_1}
	\begin{equation}
		\gnorm{g(x)-g(y)}{}{}\leq M_{g}\gnorm{x-y}{}{} \quad \forall x,y\in X,
	\end{equation}
	\begin{equation}
		\gnorm{\chi(x)-\chi(y)}{}{}\leq M_{\chi}\gnorm{x-y}{}{} \quad \forall x,y\in X,
	\end{equation}
	\begin{equation}\label{eq:sonstraint-smooth-nonsmooth-1}
		\gnorm{g(x)-g(y) - \inner{g'(y)}{x-y}}{}{} \le \tfrac{L_{g}}{2}\gnorm{x-y}{}{2} + H_{g}\gnorm{x-y}{}{}\quad \forall x, y\in X,
	\end{equation}
	\begin{equation}
		\gnorm{g'(y)^\top (x-y)}{}{}\leq M_g\gnorm{x-y}{}{}\quad \forall x,y\in X,
	\end{equation}
\end{subequations}
where $g'(\cdot):=[g'_1(\cdot),\dots, g'_m(\cdot)] \in \mathbb{R}^{n\times  m}$, $M_g  := (\sum_{i=1}^{m}M_{g_i}^2)^{1/2}\text{, }M_{\chi} := (\sum_{i=1}^{m}M_{\chi_i}^2)^{1/2} $, $L_g  := (\sum_{i=1}^{m}L_{g_i}^2)^{1/2}$ and $H_g  := (\sum_{i=1}^{m}H_{g_i}^2)^{1/2}$. As mentioned earlier, we denote $\mu_f$ as the modulus of strong convexity for $f$. We say that the convex ($\mu_f = 0$)or strongly convex ($\mu_f > 0-$) problem \eqref{eq:prima-problem} is deterministic composite nonsmooth if we can evaluate exact subgradients of $f$. In that case, we have $\sigma = 0$ in \eqref{eq:SO_F_objective}. Otherwise, the it is stochastic composite nonsmooth problem. 
In this paper, we denote $\ell_g(x_{t+1},\hat{x}_t)$ a linear approximation of $g(\cdot)$ at point $\hat{x}_t$, i.e., 
\begin{equation}\label{eq:ell_g}
	\ell_g(x_{t+1};\hat{x}_t) : =  g(\hat{x}_t)+\inner{g'(\hat{x}_t)}{x_{t+1}-\hat{x}_t}.
\end{equation}
We use this quantity regularly throughout our algorithm and analysis. Hence, for the ease of notation, we denote $ \ell_g(x_{t+1};\hat{x}_t)$ as $\ell_g(x_{t+1})$ and drop the linearization point $\wh{x}_t$.

We are now ready to introduce $\augconex$ for stochastic composite nonsmooth convex program \eqref{eq:prima-problem} (see Algorithm \ref{alg:augConEx}). First, the objective function $f$ is evaluated by a stochastic oracle $\mathfrak{F}$ at point $\hat{x}_k$ using random input $\xi_k$. Second, the updates for $\wh{x}_{k+1}, \wt{y}_{k+1}$ and $\wb{y}_{k+1}$ in \eqref{eq:Denotes} are quite simple to compute. Notice that the update in \eqref{eq:conceptual-x-s-update} requires solving for $x_{k+1}$ and $s_{k+1}$ simultaneously. Indeed, the definition of $y_{k+1}$ and $\tilde{V}_{k+1}$ in \eqref{eq:Denotes} implies $x_{k+1}$ depends on $s_{k+1}$. Moreover, $s_{k+1}$ clearly depends on $x_{k+1}$. At this point, it is unclear whether the step in \eqref{eq:conceptual-x-s-update} can be computed efficiently. Later, we answer this question with a resounding yes. In particular, we show a simple proximal operator fixed point iteration that efficiently obtains $(s_{k+1},x_{k+1})$ without any new evaluation of $\mathfrak{F}$, $g'$ or $g$. We call this method efficient since it converges to its unique fixed point $x_{k+1}$ linearly. Section \ref{sec:implicit-update} discusses implementing the update in \eqref{eq:conceptual-x-s-update} in detail. 
\begin{algorithm}[t]
	\caption{Conceptual \textbf{Aug}mented \textbf{Con}straint \textbf{Ex}trapolation method (\aconex)}\label{alg:augConEx}
	\begin{algorithmic}[1]
		\State {\bf Input:} $x_1 \in X$, $y_1$, $\{\rho_{1},\tau_1,\eta_{1},L_1\}$, $K$
		\State Set $\wh{x}_1 \gets x_1$, $\wt{y}_1 \gets y_1$, $V_1 \gets g(x_1)+ \chi(x_1)$
		\For{k = $1, \dots, K-1$}
		\State Update $(s_{k+1},x_{k+1})$ satisfying
		\begin{equation}\label{eq:conceptual-x-s-update}
			\begin{split}
				&s_{k+1} \gets\argmin_{s \le \mathbf{0}} \inprod{-\wt{y}_k}{s} + \tfrac{\rho_k}{2}\gnorm{\ell_g(x_{k+1})+ \chi(x_{k+1}) - (1-\tau_k)V_k -s}{}{2},\\
				&x_{k+1} \gets \prox(y_{k+1},\mathfrak{F}(\hat{x}_k,\xi_k) +  g'(\wh{x}_k) y_{k+1},\hat{x}_k,L_k ),\\
			\end{split}
		\end{equation}
		\begin{equation}\label{eq:Denotes}
			\textbf{Set }\begin{cases}
				V_{k+1} \gets& g(x_{k+1})+ \chi(x_{k+1}) - s_{k+1},\\
				\wt{V}_{k+1} \gets& \ell_g(x_{k+1})+ \chi(x_{k+1}) - s_{k+1},\\
				y_{k+1}\gets& \wt{y}_k + \rho_k(\wt{V}_{k+1}-(1-\tau_{k})V_k),\\
				\wt{y}_{k+1} \gets& \wt{y}_k + \eta_k[\wt{V}_{k+1} - (1-\tau_k)V_k],\\
				\wb{y}_{k+1} \gets& (1-\tau_k)\wb{y}_k + \tau_k y_{k+1},\\
				\wh{x}_{k+1} \gets& x_{k+1} + \beta_{k+1}(x_{k+1}-x_k).\\
			\end{cases}
		\end{equation}
		\EndFor\\
		\Return $x_K$.
	\end{algorithmic}
\end{algorithm}
Before providing the precise statement on the computation of \eqref{eq:conceptual-x-s-update} and discussing the convergence guarantees of Algorithm \ref{alg:augConEx}, we elaborate on the motivation for this design as well as the introduction of the new variable $s \le \zero$ inside the \augconex~method. For ease of exposition, we remove the components $\chi_i, i = 0, \dots,  m$ throughout this discussion. Then, first note that the Lagrangian reformulation of \eqref{eq:prima-problem} is 
\begin{align*}
    &\min_{x\in X} \max_{y \ge \zero} f(x) + y^\top g(x) \equiv \min_{x\in X} \max_{y} f(x) + y^\top g(x) - h(y) ,
\end{align*}
where $h$ is the indicator function of the nonnegative orthant $\{y \ge \zero\}$. It is easy to see that the conjugate of function $h$, represented as $h^*$, is the indicator of the nonpositive orthant. Then, in the above Lagrangian formulation, we write $h$ in its conjugate form as 
\begin{align*}
    \min_{x \in X} \max_{y} f(x) + y^\top g(x) - \sup_{s}[s^\top y - h^*(s)]  &\equiv \min_{x \in X} \max_{y} f(x) + y^\top g(x) - \sup_{s \le \zero} s^\top y \\
    &\equiv \min_{x \in X} \max_y \inf_{s\le \zero} f(x) + y^\top [g(x)-s] \\
    &\equiv \min_{x \in X} \min_{s \le \zero} \max_{y} f(x) + y^\top [g(x)-s],
\end{align*}
where first equivalence follows since $h^*(s)$ is the indicator of $\{s \le \zero \}$, third equivalence follows due to strong duality. The resulting min-min-max problem can be reimagined as the introduction of the slack variable $-s$ inside the constraint $g(x) \le 0$. Finally, we add a penalty for the constraint $g(x) - s$ and rename the unconstrained $y$ variable as $\wt{y}$ to avoid confusion with the nonnegative Lagrange multiplier in the first reformulation to get the following augmented Lagrangian form
\begin{equation}\label{eq:s_opt_prob}
\min_{x\in X} \min_{s \le 0} \max_{\wt{y}} [L_\rho(x,s,\wt{y}) := f(x) + \wt{y}^\top [g(x)-s] + \frac{\rho}{2} \gnorm{g(x)-s}{}{2}].    
\end{equation}
Note that such a form of augmented Lagrangian is already used in the context of compositional optimization in \cite{zhu2022new}. For a given $(\wt{y}_k, x_{k+1})$ and penalty parameter $\rho_k$, the minimizer $s \le \zero$ is essentially 
\[\argmin_{s \le \zero} -\wt{y}^\top s + \frac{\rho_k}{2} \gnorm{g(x_{k+1})-s}{}{2}.\]
In $s_{k+1}$ update of \eqref{eq:conceptual-x-s-update}, we modify the above step with a linearization of $g(x_{k+1})$ as $\ell_g(x_{k+1})$ and add $-(1-\tau_k)(g(x_k)-s_k)$ which is effectively the constraint evaluation in the previous iteration. It is not difficult to see that $y_{k+1}$ is the negative of the gradient at $s_{k+1}$ for the optimization problem in \eqref{eq:s_opt_prob} Hence, by optimality condition of $s_{k+1}$, we must have $y_{k+1} \ge \zero$. We treat $y_{k+1}$ as the nonnegative Lagrange multiplier. Since $s$ is the conjugate variable of $y$, the expression of $y_{k+1}$ as the gradient at $s_{k+1}$ is a natural choice. We use the Lagrange multiplier $y_{k+1}$ in the update of $x_{k+1}$ and perform a proximal gradient update over set $X$. In particular, expanding $\prox(y_{k+1}, \Ffrak(\wh{x}_k, \xi_k) + g'(\wh{x}_k)y_{k+1}, \wh{x}_k, L_k)$, we have
\[x_{k+1} = \argmin_{x \in X} \inprod{\Ffrak(\wh{x}_k, \xi_k) + g'(\wh{x}_k) y_{k+1}}{x} + \chi_0(x) + \sum_{i = 1}^m y_{k+1}^{(i)}\chi_i(x) + \frac{L_k}{2} \gnorm{x-\wh{x}_k}{}{2}.\]
This is a legitimate proximal update for the function $\Lcal(\cdot, y_{k+1})$.
The  gradient ascent update for $\wt{y}_{k+1}$ with step size parameter $\eta_k$ yields $\wt{y}_{k+1} = \wt{y}_k + \eta_k[g(x_{k+1}) -s_{k+1}]$. We modify this step with a linearization of $g$ as $\ell_g(x_{k+1})$ and introducing the $-(1-\tau_k)(g(x_k)-s_k)$. The  combination terms  of linearization $\ell_g$ and constraint evaluation $(1-\tau_k)V_k$ are novel design elements that are important for the convergence of the last iterate in presence of unbounded variables $\wt{y}$ and $s$. Finally, the extrapolation 
\[\wh{x}_{k+1} = x_{k+1} + \beta_{k+1}(x_{k+1}-x_k),\]
is reminiscent of Nesterov's acceleration which is important to get a fast convergence guarantee. Indeed, it is not hard to verify that if $\psi(x) = g(x) = \zero$ and $\chi_0(x) = 0$, then the resulting \augconex~method has $s_k = \wt{y}_k = y_k = \zero$ and it reduces to the well-known Accelerated Gradient Descent method \cite{nesterov83, nesterov2018lectures}. 
We now focus our attention to the evaluation of the update in \eqref{eq:conceptual-x-s-update}.
\subsection{Computation of the $(x,s)$-update in \augconex~Method}\label{sec:implicit-update}
In this section, we show a computational scheme for evaluating the update in \eqref{eq:conceptual-x-s-update} in Algorithm \ref{alg:x-s-update} below. We first define the following $ \prox $ operator which is used inside Algorithm \ref{alg:augConEx}.
\begin{equation}\label{eq:prox-operaotr1}
	F(\bx) := \prox \paran[\Big]{\rho_k\bracket[\big]{U_k + g'(\hat{x}_k)^\top \bx  + \chi (\bx)}_+,\mathfrak{F}(\wh{x}_k,\xi_k) + \rho_k g'(\wh{x}_k)[U_k + g'(\wh{x}_k)^\top \bx+\chi (\bx)]_+, \wh{x}_k, L_k},
\end{equation}

where $U_k = g(\wh{x}_k) - g'(\wh{x}_k)^T\wh{x}_k - (1-\tau_k)V_k + \tfrac{\wt{y}_k}{\rho_k}$. Equivalently, one can use the classical definition of $\proximal$ operator and rewrite  $F(\bx)$ in \eqref{eq:prox-operaotr1} as follows 
\begin{equation}\label{eq:prox-operaotr2}
	F(\bx) = \proximal_{\Gamma(x;\bx)/L_k}\paran[\Big]{\wh{x}_k - \tfrac{1}{L_k}\braces[\big]{\mathfrak{F}(\wh{x}_k,\xi_k) + \rho_k g'(\wh{x}_k)[U_k + g'(\wh{x}_k)^\top \bx+\chi (\bx)]_+}},
\end{equation}
where $\Gamma(x;\bx) = I_X(x)+\chi_0(x) + \inner{\chi(x)}{c(\bx)}$, $c(\bx) = \rho_k [U_k + g'(\wh{x}_k)^\top \bx + \chi(\bx)]_+$ and $I_X(x)$ is the indicator function of set $X$. Note that $c(\bx):\mathbb{R}^n\rightarrow \mathbb{R}^m$ is a mapping independent of $x$. 
\begin{algorithm}[H]
	\caption{Compute update in \eqref{eq:conceptual-x-s-update} of $\augconex$ method}\label{alg:implicit}
	\label{alg:x-s-update}
	\begin{algorithmic}
		\State $w_0 = \wh{x}_k$
		\For{ $t = 0, \dots, T-1$}
		\State $w_{t+1} = F(w_t)$
		\EndFor\\
		\Return $w_T$.
	\end{algorithmic}
\end{algorithm}
Algorithm \ref{alg:x-s-update} computes the fixed point of $F$. In what follows, we show that $x_{k+1}$ is a fixed point of $F$. Moreover, if $L_k$ satisfies a simple condition, then $x_{k+1}$ is a unique fixed point and Algorithm \ref{alg:x-s-update} converges linearly. Lemma \ref{lem:F-contraction} states that $F$ is a contraction operator for sufficiently large $L_k$. The proofs and analysis regarding the results in Lemma \ref{lem:F-contraction} and Theorem \ref{thm:linear-convergence} is discussed in Appendix Section \ref{sec:implicit}. 
\begin{lemma}\label{lem:F-contraction}
	If $L_k\geq 2\rho_k(M_g + M_{\chi})^2$, then $F$ is a contraction operator satisfying satisfying $\gnorm{F(\bx_1) - F(\bx_2)}{}{} \le \tfrac{1}{2} \gnorm{\bx_1-\bx_2}{}{}$. Moreover, $x_{k+1}$ is the unique fixed point of $F$.
\end{lemma}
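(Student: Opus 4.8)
The plan is to recognize $F(\bx)$ as the unique minimizer of a strongly convex function and to track precisely how that minimizer moves as $\bx$ varies. Unfolding \eqref{eq:prox-operaotr1} through the definition \eqref{eq:prox}, I would write
\[
F(\bx) = \argmin_{x \in X}\Big\{\chi_0(x) + \inner{c(\bx)}{G(x)} + \inner{\mathfrak{F}(\wh{x}_k,\xi_k)}{x} + \tfrac{L_k}{2}\gnorm{x-\wh{x}_k}{}{2}\Big\},
\]
where $G(x) := \chi(x) + g'(\wh{x}_k)^\top x$ absorbs both the term $c(\bx)^\top\chi(x)$ and the linear term $\inner{g'(\wh{x}_k)c(\bx)}{x}$ appearing in the second argument of the \prox. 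The decisive observation is that the \emph{only} dependence on $\bx$ resides in the weight $c(\bx) = \rho_k[U_k + G(\bx)]_+$, because $g'$ is frozen at $\wh{x}_k$; the map $G$ and all other terms are independent of $\bx$. Since $c(\bx) \ge \zero$ and each $\chi_i$ is convex, the objective is a genuine $L_k$-strongly convex function of $x$.

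Writing $x_j^+ := F(\bx_j)$ and letting $\Phi_{\bx}$ denote the objective above, I would apply the strong-convexity optimality inequality $\Phi_{\bx_1}(x_2^+) \ge \Phi_{\bx_1}(x_1^+) + \tfrac{L_k}{2}\gnorm{x_1^+-x_2^+}{}{2}$ together with its symmetric counterpart for $\Phi_{\bx_2}$. Adding the two and cancelling every term shared by $\Phi_{\bx_1}$ and $\Phi_{\bx_2}$ (namely $\chi_0$, $I_X$, the stochastic gradient term, and the quadratic) leaves only the $\bx$-dependent piece, yielding
\[
L_k\gnorm{x_1^+ - x_2^+}{}{2} \le \inner{c(\bx_1) - c(\bx_2)}{G(x_2^+) - G(x_1^+)}.
\]
This cancellation is the heart of the argument: the displacement of the minimizer is controlled purely by the variation of the weight $c(\bx)$ paired against the common constraint map $G$.

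The remaining step bounds the two factors. Using the nonexpansiveness of $[\cdot]_+$ followed by the estimates $\gnorm{\chi(x)-\chi(y)}{}{} \le M_\chi\gnorm{x-y}{}{}$ and $\gnorm{g'(\wh{x}_k)^\top(x-y)}{}{} \le M_g\gnorm{x-y}{}{}$ from \eqref{eq:M_g_1}, one obtains $\gnorm{c(\bx_1)-c(\bx_2)}{}{} \le \rho_k(M_g+M_\chi)\gnorm{\bx_1-\bx_2}{}{}$ and $\gnorm{G(x_2^+)-G(x_1^+)}{}{} \le (M_g+M_\chi)\gnorm{x_1^+-x_2^+}{}{}$. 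Substituting these, applying Cauchy--Schwarz, and dividing by $L_k\gnorm{x_1^+-x_2^+}{}{}$ gives $\gnorm{F(\bx_1)-F(\bx_2)}{}{} \le \tfrac{\rho_k(M_g+M_\chi)^2}{L_k}\gnorm{\bx_1-\bx_2}{}{}$, which is at most $\tfrac12\gnorm{\bx_1-\bx_2}{}{}$ exactly under the hypothesis $L_k \ge 2\rho_k(M_g+M_\chi)^2$.

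Finally, since $F$ maps the compact (hence complete) set $X$ into itself and is a contraction, Banach's fixed-point theorem supplies a unique fixed point. To identify it with $x_{k+1}$, I would evaluate the $s_{k+1}$-subproblem in \eqref{eq:conceptual-x-s-update} coordinatewise to show $y_{k+1} = \rho_k[\ell_g(x_{k+1}) + \chi(x_{k+1}) - (1-\tau_k)V_k + \wt{y}_k/\rho_k]_+ = c(x_{k+1})$; plugging $\bx = x_{k+1}$ into \eqref{eq:prox-operaotr1} then reproduces the $x_{k+1}$-update verbatim, so $F(x_{k+1}) = x_{k+1}$. I expect the main obstacle to be the clean execution of the two-point strong-convexity cancellation in the second step, specifically verifying that the shared terms really are identical across $\bx_1$ and $\bx_2$ (which hinges on $g'$ being evaluated at $\wh{x}_k$ rather than at $\bx$) so that only the $\inner{c(\bx_1)-c(\bx_2)}{G(\cdot)}$ term survives.
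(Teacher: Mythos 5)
Your proof is correct, and it delivers exactly the paper's contraction constant $\tfrac{\rho_k(M_g+M_\chi)^2}{L_k}$, but the contraction step is argued by a genuinely different decomposition. The paper identifies the fixed point the same way you do (solving the $s_{k+1}$-subproblem coordinatewise to get $y_{k+1}=c(x_{k+1})$, hence $F(x_{k+1})=x_{k+1}$); for the contraction itself, however, it views $F(\bx)$ as the proximal evaluation $\proximal_{\Gamma(\cdot;\bx)/L_k}\big(\mathcal{T}(\bx)\big)$ and runs an asymmetric monotonicity argument: it invokes monotonicity of $\partial\big(\chi_0+\chi^\top c(\bx_1)\big)/L_k$ at the two minimizers and splits the perturbation into (i) the shift of the prox center, $\gnorm{\mathcal{T}_1-\mathcal{T}_2}{}{}\le\tfrac{\rho_kM_g(M_g+M_\chi)}{L_k}\gnorm{\bx_1-\bx_2}{}{}$, which carries the linear term $g'(\wh{x}_k)c(\bx)$, and (ii) the change in the $\chi$-weights, contributing $\tfrac{\rho_kM_\chi(M_g+M_\chi)}{L_k}\gnorm{\bx_1-\bx_2}{}{}$ through a selected subgradient $\chi'(u_2)$; the two pieces sum to the same $(M_g+M_\chi)^2$ factor. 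Your symmetric two-point strong-convexity argument instead keeps both effects inside $G(x)=\chi(x)+g'(\wh{x}_k)^\top x$, cancels every shared term in one stroke, and bounds the single surviving product $\inner{c(\bx_1)-c(\bx_2)}{G(x_2^+)-G(x_1^+)}$ by Cauchy--Schwarz; this avoids choosing a particular subgradient of $\chi$ and avoids the two-term split, and it isolates cleanly the point you correctly flag as essential --- that $g'$ is frozen at $\wh{x}_k$, so only the weight $c(\bx)$ varies and the cancellation is exact. What the paper's formulation buys in exchange is the explicit resolvent form of $F$, which is reused verbatim in the proof of Theorem \ref{thm:linear-convergence}. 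Two minor remarks: Banach's theorem is more machinery than needed, since existence is already witnessed by $x_{k+1}$ and uniqueness follows from the contraction inequality alone; and your estimate $\gnorm{g'(\wh{x}_k)^\top d}{}{}\le M_g\gnorm{d}{}{}$ at the third point $\wh{x}_k$ is legitimate, being exactly the operator-norm bound the paper derives from $\gnorm{g_i'(\cdot)}{}{}\le M_{g_i}$ just before \eqref{eq:M_g_1} and uses in the same way in its own proof.
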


One important consequence of \ref{lem:F-contraction} is that Algorithm \ref{alg:x-s-update} converges to $x_{k+1}$ linearly. Theorem \ref{thm:linear-convergence} formally states such a claim. 
\begin{theorem}\label{thm:linear-convergence}
	Let $\{w_t\}_{t=1}^T$ be the sequence generated by Algorithm \ref{alg:x-s-update}. If conditions in Lemma \ref{lem:F-contraction} are satisfied then 
	$\gnorm {w_T-x_{k+1}}{}{} \le2^{-T}\gnorm{\wh{x}_k - x_{k+1}}{}{}.$ Furthermore, implementing Algorithm \ref{alg:x-s-update} requires only $T$ steps.
\end{theorem}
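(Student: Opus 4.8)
The plan is to recognize that the theorem is a direct application of the Banach-type contraction estimate already packaged in Lemma \ref{lem:F-contraction}, so the only real work is a clean induction; all the analytical difficulty has been front-loaded into establishing that $F$ is a $\tfrac{1}{2}$-contraction. First I would invoke the two conclusions of Lemma \ref{lem:F-contraction} under the hypothesis $L_k \ge 2\rho_k(M_g + M_\chi)^2$: (i) the one-step bound $\gnorm{F(\bx_1)-F(\bx_2)}{}{} \le \tfrac{1}{2}\gnorm{\bx_1-\bx_2}{}{}$ for all $\bx_1,\bx_2$, and (ii) that $x_{k+1}$ is the unique fixed point of $F$, so that $F(x_{k+1}) = x_{k+1}$.

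Next I would combine the update rule $w_{t+1} = F(w_t)$ with the fixed-point identity to obtain the contractive recursion on the error. Concretely, for each $t$,
\[
\gnorm{w_{t+1}-x_{k+1}}{}{} = \gnorm{F(w_t)-F(x_{k+1})}{}{} \le \tfrac{1}{2}\gnorm{w_t-x_{k+1}}{}{},
\]
where the equality uses $x_{k+1} = F(x_{k+1})$ and the inequality is the contraction property. I would then induct over $t = 0,1,\dots,T-1$, telescoping the factor of $\tfrac{1}{2}$, to conclude $\gnorm{w_T-x_{k+1}}{}{} \le 2^{-T}\gnorm{w_0-x_{k+1}}{}{}$. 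Substituting the initialization $w_0 = \wh{x}_k$ from Algorithm \ref{alg:x-s-update} yields exactly the claimed bound $\gnorm{w_T-x_{k+1}}{}{} \le 2^{-T}\gnorm{\wh{x}_k - x_{k+1}}{}{}$.

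Finally, the \emph{furthermore} clause is immediate from inspecting the algorithm: the loop executes exactly once per index $t = 0,\dots,T-1$, each iteration performing a single evaluation of $F$, so the procedure terminates in $T$ steps. I do not expect a genuine obstacle here; the only point requiring care is being explicit that the fixed-point identity $F(x_{k+1})=x_{k+1}$ is precisely what lets me replace $x_{k+1}$ by $F(x_{k+1})$ inside the norm, and that the contraction constant $\tfrac{1}{2}$ (rather than a generic $c<1$) is what produces the exact $2^{-T}$ rate. The substantive content — verifying contractivity and uniqueness of the fixed point under the stated condition on $L_k$ — is entirely delegated to Lemma \ref{lem:F-contraction}, whose proof is deferred to Appendix \ref{sec:implicit}.
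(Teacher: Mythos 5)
Your proposal is correct and follows the paper's own argument exactly: the same one-step estimate $\gnorm{w_{t+1}-x_{k+1}}{}{} = \gnorm{F(w_t)-F(x_{k+1})}{}{} \le \tfrac{1}{2}\gnorm{w_t-x_{k+1}}{}{}$ via the fixed-point identity and contraction from Lemma \ref{lem:F-contraction}, telescoped over $t$ and closed by the initialization $w_0 = \wh{x}_k$. No gaps; the paper's proof in Appendix \ref{sec:implicit} is the same induction.
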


\begin{remark}
	Note that Algorithm \ref{alg:x-s-update} does not need any new computation of $\mathfrak{F}, g$ or $g'$. Indeed, the linearization of $g$ in the operator $F$ implies that the variable x appears linearly inside the proximal operator. Hence, the gradient complexity of Algorithm \ref{alg:augConEx} exactly equals $K$ even though it requires additional proximal operations in the inner loop. Since $X$ is an easy set to project onto, and the algorithm converges linearly, we can get arbitrarily close to $x_{k+1}$ quickly with little computational effort. Moreover, as the algorithm proceeds, the last iterate $x_k$ will get close to $x^*$. Hence, the distance between $\wh{x}_k$ and $x_{k+1}$ will decrease as well. Hence, there is a warm-starting effect of choosing $w_0 = \wh{x}_k$. 
    The convergence guarantee in Theorem \ref{thm:linear-convergence} implies $\gnorm{w_T- x^{k+1}} \le \delta$ when $T = \log_{2}(\frac{\gnorm{\wh{x}_k - x_{k+1}}{}{}}{\delta})$. However, the base 2 of the log here is an underestimate. E.g., in the convex case, this base is as large as $\sqrt{K}$. In sublinear convergence case, where $K = poly(\frac{1}{\epsilon})$, we have $T = \log_{\sqrt{K}}(\frac{\gnorm{\wh{x}_k - x_{k+1}}{}{}}{\delta}) = O(1)$ for any $\delta = O(\text{poly}(\frac{1}{\epsilon}))$. Hence, the log factor of $T$ vanishes if we carefully analyze the chosen value of $L_k$. 
\end{remark}
We observe that Algorithm \ref{alg:x-s-update} behaves along the expected lines in our numerical experiments. In particular, due to its ``strong'' linear rate, the fixed point scheme converges to $x_{k+1}$ in only four projections in the initial phase of Algorithm \ref{alg:augConEx}. This number quickly comes down to two projections in the middle phase and only one towards the end. Most iterations require two projections (see Section \ref{sec:numerical}).
Due to the fast converging and computationally cheap method for update \eqref{eq:conceptual-x-s-update}, we henceforth assume that it can be computed directly.

We are now ready to discuss the convergence results of the $\augconex$ method. In the following section, we present theorems that illustrate the step size policy and convergence results of $\augconex$ for solving stochastic composite nonsmooth function constrained optimization problems with or without strong-convexity assumption. We provide the detailed proofs of these theorems in Section \ref{sec:convegence-analysis-augconex}. 
\subsection{Convergence rate results for \augconex~Method}
We first discuss the convergence guarantees for the convex case. Recall that in this case, the strong convexity parameter $\mu_f = 0$. 
\begin{theorem}\label{thm:convex-case}
	Suppose \eqref{eq:SO_F_objective}, \eqref{eq:smooth-nonsmooth}, \eqref{eq:sonstraint-smooth-nonsmooth}, \eqref{eq:M_g} are satisfied for the convex problem in \eqref{eq:prima-problem} with $\mu_f = 0$. Let $B\geq 1$ be a constant. Set $K\geq 2$ and assume $\{\rho_{k},\tau_k,L_k,\beta_{k+1}\}_{k\geq 1}$ are generated by the following step size policy parameter in Algorithm \ref{alg:augConEx}
	\begin{equation}\label{eq:non-ergo-stepsize-non-smooth-eta}
		\begin{aligned}
			&\tau_k = \tfrac{2}{k+1}, \quad \rho_{k+1} = \rho_1 (k+2),\quad
			\eta_k = \rho_{1}\tfrac{k^2}{K},\quad
			\beta_{k+1} = \tfrac{(1-\tau_{k})\tau_{k+1}}{\tau_{k}},\\
			& L_k =L=  2(L_f+ BL_g + \rho_{1}K(M_g + M_\chi)^2)+\tfrac{K\sqrt{120K(\mathcal{H}_*^2 + 2(H_f^2+\sigma^2))}}{120D_X},
		\end{aligned}
	\end{equation}
	where $\mathcal{H}_*=L_gD_x  [\gnorm{y^\ast}{}{}+1 - B]_++H_g(\gnorm{y^\ast}{}{}+1) $. Then, we have the following bounds for optimality and feasibility gaps 
	\begin{equation}\label{eq:convex-optim}
		\begin{aligned}
		\Ebb[\psi_0(x_K)- \psi_0(x^*)]\leq&\tfrac{13D_X\sqrt{120K(\mathcal{H}_*^2 + 2(H_f^2+ \sigma^2))}}{60(K-1)}+ \tfrac{1}{(K-1)\rho_{1}}\gnorm{y_1}{}{2} + \tfrac{52D_X^2(L_f+ BL_g +\rho_{1} K(M_g + M_\chi)^2)}{(K-1)K}\\
		& \quad+\tfrac{80D_X(K+3)(H_f^2+ \sigma^2)}{K(240D_X\rho_{1}(M_g+M_\chi)^2 + \sqrt{120K(\mathcal{H}_*^2 + 2(H_f^2+\sigma^2))})}+ \tfrac{2(H_f^2 + \sigma^2)}{K^2(K-1)\rho_1(M_g + M_\chi)^2}.
	\end{aligned}
	\end{equation}
	and 
	\begin{equation}\label{eq:convex-feasib}
		\begin{aligned}
			\Ebb(\gnorm{[\psi(x_K)]_+}{}{})& \leq  \tfrac{13D_X\sqrt{120K(\mathcal{H}_*^2 + 2(H_f^2+ \sigma^2))}}{6(K-1)} + \tfrac{52D_X^2(L_f+ BL_g + \rho_{1}K(M_g + M_\chi)^2)}{(K-1)K}\\
		& \quad + \tfrac{40D_X(K+3)(\mathcal{H}_*^2+2(H_f^2+ \sigma^2) )}{K(240D_X\rho_{1}(M_g+M_\chi)^2 + \sqrt{120K(\mathcal{H}_*^2 + 2(H_f^2+\sigma^2))})}+ \tfrac{1}{(K-1)\rho_{1}}\gnorm{y_1 - \hat{y}}{}{2} \\
		&\quad +\tfrac{\mathcal{H}_*^2+2(H_f^2 + \sigma^2)}{K^2(K-1)\rho_1(M_g + M_\chi)^2},
		\end{aligned}
	\end{equation} 
 where $\hat{y} = (\gnorm{y^\ast}{}{}+1)\tfrac{[\psi(\bar{x}_K)]_+}{\gnorm{[\psi(\bar{x}_K)]_+}{}{}}$.
\end{theorem}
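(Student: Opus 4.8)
The plan is to bound a suitably weighted saddle-point gap of the augmented Lagrangian $\Lcal(x,s,y)$ from \eqref{eq:sp-problem-conj} and then split it into the optimality and feasibility bounds \eqref{eq:convex-optim}--\eqref{eq:convex-feasib}. First I would record the first-order optimality conditions of the two updates in \eqref{eq:conceptual-x-s-update}. The prox step for $x_{k+1}$ yields, for every $x \in X$, a three-point inequality of the form
\[
\Lcal(x_{k+1}, y_{k+1}) - \Lcal(x, y_{k+1}) \le \tfrac{L_k}{2}\brbra{\gnorm{x - \wh{x}_k}{}{2} - \gnorm{x - x_{k+1}}{}{2} - \gnorm{x_{k+1} - \wh{x}_k}{}{2}} + E_k,
\]
where $E_k$ collects the linearization error of $f$ (controlled by $\tfrac{L_f}{2}\gnorm{\cdot}{}{2} + H_f\gnorm{\cdot}{}{}$ via \eqref{eq:smooth-nonsmooth}), the linearization error of $g$ scaled by $y_{k+1}$ (via \eqref{eq:sonstraint-smooth-nonsmooth-1}), and the stochastic error $\inner{\Ffrak(\wh{x}_k,\xi_k) - f'(\wh{x}_k)}{x - x_{k+1}}$; meanwhile the $s_{k+1}$ step identifies $y_{k+1} \ge \zero$ as the negative gradient of the augmented term at $s_{k+1}$, which is exactly what lets me replace the true constraint value by its linearized surrogate $\wt{V}_{k+1}$.

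Second, I would assemble the dual side. Writing the $\wt{y}$-update as a proximal/mirror step on $-\,\wt{y}^\top(\wt{V}_{k+1} - (1-\tau_k)V_k)$ gives a second three-point inequality in $\gnorm{\wt{y}_k - y}{}{2}$. The decisive step is the constraint-extrapolation telescoping: the combination $\wt{V}_{k+1} - (1-\tau_k)V_k$ together with the linearization $\ell_g$ is designed so that the cross terms $\inner{y - y_{k+1}}{\wt{V}_{k+1} - (1-\tau_k)V_k}$ telescope across iterations, up to a residual that is exactly the $g$-linearization error bounded through $\mathcal{H}_*$. Combining the primal and dual inequalities and invoking the key relation \eqref{eq:upperbound} (so that $\Lcal(x,y) \le \Lcal(x,s,y)$, with equality at $s = 0$) produces a one-step bound on $\Lcal(x_{k+1}, y) - \Lcal(x, \wb{y}_{k+1})$.

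Third, I would multiply the one-step bound by the weights $1/\tau_k$ and sum. The momentum relation $\beta_{k+1} = (1-\tau_k)\tau_{k+1}/\tau_k$ makes the $\tfrac{L}{2}(\cdots)$ distance terms telescope in Nesterov-accelerated fashion, producing the accelerated $\mathcal{O}(1/K^2)$ dependence on $L_f + BL_g + \rho_1 K(M_g+M_\chi)^2$. The schedule $\rho_{k+1} = \rho_1(k+2)$, $\eta_k = \rho_1 k^2/K$ is chosen so the dual distance terms telescope and the growing penalty absorbs the quadratic $g$-linearization residual, while the calibrated $L = L_k$ in \eqref{eq:non-ergo-stepsize-non-smooth-eta} balances the nonsmooth and stochastic contributions $H_f, H_g, \sigma$ against the retained negative quadratic terms by a Young-type inequality, yielding the $\mathcal{O}(\sqrt{K\,(\mathcal{H}_*^2 + H_f^2 + \sigma^2)}/(K-1)) = \mathcal{O}(1/\sqrt{K})$ contributions. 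Taking expectations eliminates the martingale stochastic terms by unbiasedness and bounds the variance by $\sigma^2$ from \eqref{eq:SO_F_objective}.

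Finally, I would extract the two estimates from the weighted gap. For optimality I take $x = x^*$ and $y = 0$, using $\Lcal(x_{k+1},0) = \psi_0(x_{k+1})$ together with the saddle property in Assumption \ref{assump:SPP} to pass from $\wb{y}_{k+1}$ to $y^*$; for feasibility I take $y = \hat{y} = (\gnorm{y^*}{}{}+1)[\psi(\bar{x}_K)]_+/\gnorm{[\psi(\bar{x}_K)]_+}{}{}$, so that $\hat{y}^\top \psi(x) = (\gnorm{y^*}{}{}+1)\gnorm{[\psi(x)]_+}{}{}$ combined with the standard consequence $\psi_0(x) - \psi_0^* \ge -\gnorm{y^*}{}{}\gnorm{[\psi(x)]_+}{}{}$ isolates $\gnorm{[\psi(x_K)]_+}{}{}$. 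I expect the main obstacle to be precisely the last-iterate part: the telescoping naturally controls a weighted average, so I must exploit the co-designed extrapolation and the retained negative terms $-\gnorm{x_{k+1}-x_k}{}{2}$ in the sum to show that the gap at the \emph{final} iterate $x_K$ — rather than a running average — is the quantity actually bounded, which is the technical crux separating this analysis from the classical ergodic ConEx bounds.
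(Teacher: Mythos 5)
Your overall architecture (one-step prox inequalities for the $x$-, $s$-, and $\wt{y}$-updates, handling the extrapolation residual through $\mathcal{H}_*$, telescoping under the accelerated step-size policy, then testing at $(x^*,\mathbf{0})$ for optimality and at $(x^*,\hat{y})$ for feasibility) matches the paper's proof, which runs through Lemma \ref{lemma:importantlemma} and the recursion \eqref{eq:recur-nonsmooth-eta}. However, there is a genuine gap at exactly the point you flag as the ``technical crux.'' Your plan produces a \emph{non-recursive} one-step bound on $\Lcal(x_{k+1},y)-\Lcal(x,\wb{y}_{k+1})$, multiplies it by weights, and sums; as you yourself note, this controls a weighted average of gaps, and you propose to upgrade it to a last-iterate statement by exploiting the retained negative terms $-\gnorm{x_{k+1}-x_k}{}{2}$. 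That upgrade is not how the result is obtained, and it is doubtful it can work: the gap at a fixed $(x,y)$ is not monotone along the iterates, and in the actual analysis the negative quadratic terms $-\tfrac{L_k-L_f}{2}\gnorm{x_{k+1}-\wh{x}_k}{}{2}$ are already fully consumed to absorb the nonsmooth and stochastic contributions $H_f$, $H_g$, $\gnorm{\delta_k}{}{}$ and the $g$-linearization error via Young-type inequalities, so they are not available for an ergodic-to-pointwise conversion.

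The paper's mechanism is different and is built into the one-step lemma itself: in the proof of Lemma \ref{lemma:importantlemma}, the primal prox inequality \eqref{eq:int_rel4} and the $s$-optimality inequality \eqref{eq:int_rel3} are each evaluated \emph{twice} --- once at the test point $(x,s)$ with weight $\tau_k$, and once at the \emph{previous iterate} $(x_k,s_k)$ with weight $1-\tau_k$ --- and the four resulting inequalities are summed using the two identities in \eqref{eq:recurtion}. This convex-combination testing is what creates the contraction-type recursion
\[
\Delta_{k+1} \le (1-\tau_k)\,\Delta_k + \epsilon_k, \qquad \Delta_k := \Lcal(x_k,s_k,y)-\Lcal(x,s,\wb{y}_k) + \tfrac{1}{2\eta_{k-1}}\gnorm{y-\wt{y}_k}{}{2},
\]
directly on the gaps evaluated \emph{at the iterates}, with the condition $\tfrac{1}{\eta_k}\le \tfrac{1-\tau_k}{\eta_{k-1}}$ folding the dual distances into $\Delta_k$. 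Telescoping the products $\prod_i(1-\tau_i)$ (equivalently, weighting by roughly $1/\tau_k^2$ rather than your $1/\tau_k$ --- the quadratic weights are what deliver the $\mathcal{O}(1/K^2)$ factor on $L_f+BL_g+\rho_1K(M_g+M_\chi)^2$) then bounds $\Lcal(x_K,s_K,y)-\Lcal(x,s,\wb{y}_K)$ itself, so the last-iterate property requires no conversion step at all; the remaining work in the paper is bookkeeping of the three summations \eqref{eq:first summation}--\eqref{eq:third summation} and the choice of $L$ to balance the error terms. Without restructuring your second step so that the recursive $(1-\tau_k)$ term appears already in the one-step bound, your outline cannot close.
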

one immediate result of Theorem \ref{thm:convex-case} is the following corollary
\begin{corollary}\label{cor:convex}
	we obtain an $(\epsilon,\epsilon)$-optimal solution of the problem \eqref{eq:prima-problem} in $K_\epsilon$ iterations, where 
	\begin{equation}\label{eq:K-epsilon-convex}
		\begin{aligned}
			K_\epsilon =& \max \big\{  \tfrac{(130D_x)^2(\mathcal{H}_*^2 +2( H_f^2+\sigma^2))}{0.3\epsilon^2} + \tfrac{(400D_x)^2(\mathcal{H}_*^2 +2( H_f^2+\sigma^2))}{120\epsilon^2}+1,\tfrac{10}{\epsilon\rho_{1}}\gnorm{y_1 - \hat{y}}{}{2}+\tfrac{520D_x^2\rho_1(M_g + M_\chi)^2}{\epsilon}+2,\\
			&(\tfrac{5(H_f^2 + \sigma^2)}{\epsilon\rho_1(M_g + M_\chi)^2})^{1/3}+1, (\tfrac{260D_x^2(L_f+BL_g)}{\epsilon})^{1/2}+1, (\tfrac{3000D_x^2(\mathcal{H}_*^2 +8( H_f^2+\sigma^2))}{\epsilon^2})^{1/3}.
			\big\}
		\end{aligned}
	\end{equation}
\end{corollary}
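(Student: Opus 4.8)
The plan is to treat Corollary \ref{cor:convex} purely as an algebraic inversion of the two bounds \eqref{eq:convex-optim} and \eqref{eq:convex-feasib} established in Theorem \ref{thm:convex-case}. I want to exhibit a threshold $K_\epsilon$ such that $K \ge K_\epsilon$ forces both right-hand sides to be at most $\epsilon$; by Definition \ref{def:gaps} this certifies that the returned last iterate $x_K$ is a stochastic $(\epsilon,\epsilon)$-optimal solution. Since each right-hand side is a finite sum of five explicit summands, the natural strategy is to assign a fixed fraction of $\epsilon$ as a budget to each summand and to require $K$ to be large enough that every summand meets its budget; the final $K_\epsilon$ is then the maximum over all the per-term thresholds, and summing the (at most ten) budgeted terms recovers the total bound $\epsilon$ for each gap.

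First I would reduce every summand to a clean monomial in $K$. For $K \ge 2$ one uses $K-1 \ge K/2$, and for the ``fourth'' summands one lower-bounds the composite denominator $240 D_X \rho_1 (M_g + M_\chi)^2 + \sqrt{120K(\mathcal{H}_*^2 + 2(H_f^2 + \sigma^2))}$ by its $\sqrt{K}$ part, discarding the constant part (which only enlarges the term). This exposes the decay order of each piece: the terms carrying $\sqrt{120K(\mathcal{H}_*^2+2(H_f^2+\sigma^2))}/(K-1)$ are $\Theta(1/\sqrt{K})$; the terms $\gnorm{y_1}{}{2}/((K-1)\rho_1)$, $\gnorm{y_1-\hat y}{}{2}/((K-1)\rho_1)$, together with the $\rho_1 K (M_g+M_\chi)^2$ piece, are $\Theta(1/K)$; the pure smoothness piece $D_X^2(L_f + BL_g)/((K-1)K)$ is $\Theta(1/K^2)$; and the final summands are $\Theta(1/K^3)$.

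Inverting each simplified inequality then yields a threshold of the matching order, and I claim these reassemble into \eqref{eq:K-epsilon-convex}. An $\Theta(1/\sqrt{K})$ term needs $K = \Omega(\epsilon^{-2})$, producing the two $\epsilon^{-2}$ contributions bundled in the first entry of $K_\epsilon$ (one from the leading optimality term, one from the leading feasibility term, whose numerical constants differ because the feasibility bound carries $6(K-1)$ rather than $60(K-1)$ in its denominator); an $\Theta(1/K)$ term needs $K = \Omega(\epsilon^{-1})$, giving the second entry with the $\gnorm{y_1-\hat y}{}{2}$ and $\rho_1(M_g+M_\chi)^2$ pieces; the smoothness term needs $K = \Omega(\epsilon^{-1/2})$, giving the fourth entry involving $L_f + BL_g$; and the $\Theta(1/K^3)$ term needs $K = \Omega(\epsilon^{-1/3})$, giving the third entry (with $\rho_1$ and $H_f^2+\sigma^2$). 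The remaining $\epsilon^{-2/3}$ entry appears to arise from a combined contribution of order $K^{-3/2}$, obtained when the $\sqrt{K}$ denominator is paired against the full numerator $\mathcal{H}_*^2 + 2(H_f^2+\sigma^2)$ in the feasibility estimate; the enlarged coefficient $\mathcal{H}_*^2 + 8(H_f^2+\sigma^2)$ reflects absorbing several such pieces into one bound. Taking the maximum of all thresholds and adding the additive slack $+1,+2$ to cover the $K-1 \ge K/2$ reduction and integer rounding reproduces \eqref{eq:K-epsilon-convex}.

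The hard part will not be conceptual but the constant bookkeeping. One must decide term by term which part of each composite denominator to drop, track that the feasibility bound's leading constants are roughly an order of magnitude larger than the optimality bound's, and reconcile the slightly different numerators ($H_f^2+\sigma^2$ versus $\mathcal{H}_*^2 + 2(H_f^2+\sigma^2)$) so that one single max expression dominates every summand of both \eqref{eq:convex-optim} and \eqref{eq:convex-feasib} simultaneously. Once the fractions of $\epsilon$ allocated to each summand are fixed consistently across the two bounds, verifying that $K \ge K_\epsilon$ implies both right-hand sides are at most $\epsilon$ is a routine substitution.
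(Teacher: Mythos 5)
Your proposal is correct and is essentially the paper's own argument: the paper proves the corollary by budgeting $\epsilon/5$ to each of the five summands in \eqref{eq:convex-optim} and \eqref{eq:convex-feasib} and checking that $K \ge K_\epsilon$ (using reductions like $K-1 \ge K/2$ and discarding the constant part of the composite denominators) makes every summand meet its budget, exactly the per-term inversion-and-max scheme you describe. Your only deviations are cosmetic bookkeeping details (e.g., both $\epsilon^{-2}$ pieces of the first entry trace to the two $\Theta(1/\sqrt{K})$ feasibility terms, which dominate their optimality counterparts, and the $\epsilon^{-2/3}$ entry comes from the $3/K$ part of the split $(K+3)/K = 1 + 3/K$), none of which changes the approach or its validity.
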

\begin{proof}
	Using \eqref{eq:convex-feasib}, and \eqref{eq:K-epsilon-convex}, we have $	\Ebb[\gnorm{[\psi(x_K)]_+}{}{}] \leq \tfrac{\epsilon}{5}+\tfrac{\epsilon}{5}+\tfrac{\epsilon}{5}+\tfrac{\epsilon}{5}+\tfrac{\epsilon}{5}$. Similarly, using \eqref{eq:convex-optim} and \eqref{eq:K-epsilon-convex}, one can see $ \Ebb[\psi_0(x_K) - \psi(x^*)] \leq \epsilon$.
\end{proof}
Theorem \ref{thm:convex-case} states the convergence result of the $\augconex$ method for the stochastic nonsmooth case of the optimization problem in \eqref{eq:prima-problem}. One important observation is related to the choice of the initial parameter $\rho_{1}$. Specifically, to have the error complexity of $\mathcal{O}(1/\epsilon^2)$ or the convergence rate of $ \mathcal{O}(1/\sqrt{K}) $, one should choose $\rho_{1}$ as $\mathcal{O}(\sqrt{K})$ and $\Omega(1/\sqrt{K})$. In detail, one can derive the following error complexity results for nonsmooth deterministic and stochastic settings. First, let us derive the complexity for the nonsmooth deterministic case (i.e., $\sigma=0$). Considering $\rho_{1}=1$ the error complexity after $K$ iterations is of 
\[
\mathcal{O}\big( \tfrac{D_X^2(\mathcal{H}_*^2 + H_f^2)}{\epsilon^2} + \tfrac{D_X\sqrt{(L_f+BL_g)}}{\sqrt{\epsilon}} + \tfrac{D_X^2(M_g+M_\chi)^2}{\epsilon}\big).
\]
Let us consider the stochastic case where $\sigma>0$. We have the following error complexity for $\rho_{1} =1$
\[
\mathcal{O}\big( \tfrac{D_X^2(\mathcal{H}_*^2 + H_f^2+\sigma^2)}{\epsilon^2} + \tfrac{D_X\sqrt{(L_f+BL_g)}}{\sqrt{\epsilon}} + \tfrac{D_X^2(M_g+M_\chi)^2}{\epsilon}\big).
\]
Observe that the effect of having a stochastic approximation of $f'$ is the same as non smoothness of $f$. 

Now, we specify a step size policy for the convergence of $\augconex$ method for solving problem \eqref{eq:prima-problem} in the strongly convex setting where $f$ is $\mu_f$-strongly convex. The proof of Theorem \ref{thm:non-ergo-strong-semi} is presented in Section~\ref{sec:convegence-analysis-augconex}. 
\begin{theorem}\label{thm:non-ergo-strong-semi}
	Suppose \eqref{eq:SO_F_objective}, \eqref{eq:smooth-nonsmooth}, \eqref{eq:sonstraint-smooth-nonsmooth}, \eqref{eq:M_g} are satisfied, $\mu_f>0$ and we have the following step size policy for $\{\tau_{k},\rho_{k},\eta_k,\beta_{k+1}\}_{k\geq 1}$
	\begin{equation}\label{eq:non-ergo-stepsize-strong}
		\begin{aligned}
			&\tau_{k+1} = \tfrac{\tau_k}{2}(\sqrt{\tau_k^2+4}-\tau_k), \quad \rho_{k+1} = \tfrac{\rho_1}{\tau_{k+1}^2},\quad
			\eta_k = \rho_k,\quad \beta_{k+1} = \tfrac{(1-\tau_{k})\tau_{k} L_{k}}{\tau_{k}^2 L_{k} + L_{k+1}\tau_{k+1}}\\
			& L_k =2(L_f + B L_g+\rho_k (M_g + M_\chi)^2),
		\end{aligned}
	\end{equation}
	where $\tau_1 = 1$, $ \rho_{1}=\tfrac{\mu_f}{2(M_g + M_\chi)^2}$. 
	Then we have the following expected optimality and feasibility gaps
	\begin{equation}\label{eq:opt-1-non-ergo}
		\begin{aligned}
			\Ebb[\psi_0(x_K) - \psi(x^*)]\leq& \tfrac{4}{K^2}[4(L_f+BL_g)D_X^2+ \tfrac{1}{2\eta_{1}}\gnorm{y_1}{}{2} ]+ \tfrac{8(K+2)( H_f^2+\sigma^2)}{\mu_fK^2}
		\end{aligned}
	\end{equation}
	\begin{equation}\label{eq:feasib-1-non-ergo}
		\begin{aligned}
			\Ebb[\gnorm{[\psi(x_K)]_+}{}{}]&\leq \tfrac{4}{K^2}[(L_f+BL_g)\gnorm{x_1-x^*}{}{2}
			+ \tfrac{1}{2\eta_{1}}\gnorm{y_1-\hat{y}}{}{2} ]+ \tfrac{4(K+2)(\mathcal{H}_*^2+ 2(H_f^2+\sigma^2))}{\mu_fK^2},
		\end{aligned}
	\end{equation}
	where $\hat{y} = (\gnorm{y^*}{}{}+1)\tfrac{[\psi(\bar{x}_K)]_+}{\gnorm{[\psi(\bar{x}_K)]_+}{}{}}$ and $\mathcal{H}_* = L_gD_x[\gnorm{y^*}{}{}+1-B]_++ H_g(\gnorm{y^*}{}{}+1)$.

 Also, convergence in the last iterate can be stated as below 
 \begin{equation}\label{eq:last-iter}
     \begin{aligned}
	\Ebb[\gnorm{\tfrac{1}{\tau_{K-1}}[x_{K} - (1-\tau_{K-1})x_{K-1}] - x^*}{}{2}]&\leq \tfrac{8}{\mu_f K^2}[4(L_f+BL_g)\gnorm{x_1-x^*}{}{2}
	+ \tfrac{2}{\eta_{1}}\gnorm{y_1-y^*}{}{2}\\
        &\quad+ \tfrac{4(K+2)((L_gD_x[\gnorm{y^*}{}{}-B]_++ H_g\gnorm{y^*}{}{})^2+ 2(H_f^2+\sigma^2))}{\mu_f}],
	\end{aligned}
 \end{equation}
\end{theorem}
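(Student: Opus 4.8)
\emph{Overall approach.} The plan is to obtain \eqref{eq:last-iter} from the \emph{same} aggregated estimate that produces \eqref{eq:opt-1-non-ergo}--\eqref{eq:feasib-1-non-ergo}, but evaluated at the dual test point $y=y^*$ and solved for the primal distance term rather than for the Lagrangian gap. First I would introduce the momentum sequence $z_k$ implicit in the accelerated update through $x_k=(1-\tau_{k-1})x_{k-1}+\tau_{k-1}z_k$, so that $z_K=\tfrac{1}{\tau_{K-1}}[x_K-(1-\tau_{K-1})x_{K-1}]$ is exactly the vector appearing on the left of \eqref{eq:last-iter}; this $z_k$ is the sequence whose squared distance to $x^*$ is carried inside the Lyapunov function of \aconex.

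\emph{Master inequality.} The hard core is to establish the aggregated bound. Combining the prox optimality condition for the $x_{k+1}$ update (which supplies the negative quadratic $-\tfrac{L_k}{2}\gnorm{x_{k+1}-\wh x_k}{}{2}$) with the three-point/descent lemma, the dual step for $\wt y_{k+1}$, the $\mu_f$-strong convexity of $\Lcal(\cdot,y)$ (valid for every $y\in\mathbb{R}^m_+$ since $y^\top\psi(\cdot)$ is convex), and the extrapolation identity for $\wh x_k$, then telescoping against the weights induced by the schedule \eqref{eq:non-ergo-stepsize-strong} with $\rho_k=\rho_1/\tau_k^2$ and $\eta_k=\rho_k$, I expect an estimate of the form, for every $y\in\mathbb{R}^m_+$,
\begin{equation*}
\Lambda_K\big[\Lcal(x_K,y)-\Lcal(x^*,\wb y_K)\big]+\tfrac{L_K}{2}\gnorm{z_K-x^*}{}{2}+\tfrac{1}{2\eta_{K-1}}\gnorm{\wt y_K-y}{}{2}\le 4(L_f+BL_g)\gnorm{x_1-x^*}{}{2}+\tfrac{2}{\eta_1}\gnorm{y_1-y}{}{2}+\mathcal{E}_K(y),
\end{equation*}
where $\Lambda_K=\Theta(\tau_{K-1}^{-2})$ and the accumulated error obeys $\Ebb[\mathcal{E}_K(y)]\le\tfrac{4(K+2)}{\mu_f}(\mathcal{H}(y)^2+2(H_f^2+\sigma^2))$ with $\mathcal{H}(y):=L_gD_X[\gnorm{y}{}{}-B]_++H_g\gnorm{y}{}{}$. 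The $y$-dependence enters only through the constraint-linearization residual $g(x_{k+1})-\ell_g(x_{k+1})$ paired with $y$; taking $\Ebb$ annihilates the martingale cross terms from $\Ffrak(\wh x_k,\xi_k)-f'(\wh x_k)$ and, via \eqref{eq:SO_F_objective}, replaces the remaining squared stochastic term by $\sigma^2$.

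\emph{Specialization and extraction.} Next I would set $y=y^*$ and discard two nonnegative terms. Since $y^*\ge\zero$ and $\wb y_K\ge\zero$ (a convex combination of the multipliers $y_k\ge\zero$ forced by the optimality of the $s$-update), Assumption \ref{assump:SPP} gives $\Lcal(x_K,y^*)\ge\Lcal(x^*,y^*)\ge\Lcal(x^*,\wb y_K)$, so the Lagrangian-gap term is nonnegative; the dual term is nonnegative as well. Dropping both leaves $\tfrac{L_K}{2}\gnorm{z_K-x^*}{}{2}\le(\text{RHS at }y^*)$ in expectation. Finally, using $L_K\ge 2\rho_K(M_g+M_\chi)^2$ together with $\rho_K=\rho_1/\tau_K^2\ge\rho_1(K+1)^2/4$ and $\rho_1(M_g+M_\chi)^2=\mu_f/2$ yields $L_K\ge\mu_f K^2/4$, hence the prefactor $2/L_K\le 8/(\mu_f K^2)$; substituting $\mathcal{H}(y^*)=L_gD_X[\gnorm{y^*}{}{}-B]_++H_g\gnorm{y^*}{}{}$ and $\eta_1=\rho_1$ reproduces \eqref{eq:last-iter} exactly.

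\emph{Main obstacle.} The difficulty is entirely in assembling the master inequality so that the $\tau_k$-recursion, the penalty schedule $\rho_k\propto\tau_k^{-2}$, and the dual step $\eta_k=\rho_k$ telescope cleanly while the novel terms $\ell_g(x_{k+1})$ and the correction $-(1-\tau_k)V_k$ in the $y$- and $\wt y$-updates cancel across iterations. One must show the coupling among the unbounded dual $\wt y$, the slack $s$, and the linearization residual is absorbed: the residual is controlled by the prox quadratic $-\tfrac{L_k}{2}\gnorm{x_{k+1}-\wh x_k}{}{2}$ through Young's inequality, leaving a net error that aggregates into the $\mathcal{H}(y)^2$ term, and the strong convexity must be spent to upgrade the per-iteration weight from $\tau_k^{-1}$ to $\tau_k^{-2}$ so that the distance coefficient attains the $\Theta(\mu_f K^2)$ magnitude needed for the $1/(\mu_f K^2)$ rate.
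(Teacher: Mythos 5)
Your proposal takes essentially the same route as the paper: your ``master inequality'' is precisely the paper's telescoped potential recursion $Q_{k+1}(x,y)\le(1-\tau_k)Q_k(x,y)+\mathrm{err}_k$ (obtained from Lemma \ref{lemma:importantlemma} combined with the step-size lemma of \cite{zhu2022new}, which is exactly where strong convexity ``upgrades'' the weights as you anticipate), and the last-iterate bound is then extracted, just as in the paper's proof of \eqref{eq:last-iter}, by evaluating at $(x^*,s^*,y^*)$, discarding the nonnegative gap and dual-distance terms, and converting the surviving coefficient via $L_{K-1}\tau_{K-1}^2\ge 2\rho_1(M_g+M_\chi)^2=\mu_f$ together with the factor $\prod_{k=2}^{K-1}(1-\tau_k)=\tau_{K-1}^2\le 4/K^2$. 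Apart from harmless slips (after rescaling by $\Lambda_K=\tau_{K-1}^{-2}$ the distance coefficient is $L_{K-1}/2$, not $L_K/2$, and the dual term also picks up the factor $\Lambda_K$ --- immaterial since both terms are dropped), your decomposition, error aggregation, and final constants match the paper's argument.
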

Similar to Corollary \ref{cor:convex}, one can derive the following corollary from the above theorem. 
\begin{corollary}\label{cor:strong-convex}
	we obtain an $(\epsilon,\epsilon)$-optimal solution of the problem \eqref{eq:prima-problem} in $K_\epsilon$ iterations, where 
	\begin{equation}\label{eq:K-epsilon}
		K_\epsilon = \max \big\{ \tfrac{8D_x\sqrt{L_f+BL_g}}{\sqrt{\epsilon}} + \tfrac{2\sqrt{2}\gnorm{y_1-y^*}{}{}}{\sqrt{\eta_1\epsilon}} +\tfrac{4\sqrt{2(\mathcal{H}_*^2 +2( H_f^2+\sigma^2))}}{\sqrt{\mu_f\epsilon}} , \tfrac{16(\mathcal{H}_*^2 +2( H_f^2+\sigma^2))}{\mu_f\epsilon}\}.
	\end{equation}
\end{corollary}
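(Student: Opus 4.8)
The plan is to treat the Corollary purely as an \emph{inversion} of the two rate bounds \eqref{eq:opt-1-non-ergo} and \eqref{eq:feasib-1-non-ergo} of Theorem \ref{thm:non-ergo-strong-semi}: I seek a $K$ that drives both right-hand sides below $\epsilon$, and then exhibit the displayed $K_\epsilon$ as a clean sufficient threshold. The first move is cosmetic, to put the two bounds in a common form. Using the diameter definition \eqref{eq:diam} I replace $\gnorm{x_1-x^*}{}{2}$ by $4D_X^2$, so that the leading term of the feasibility bound becomes identical to that of the optimality bound, namely $\tfrac{16(L_f+BL_g)D_X^2}{K^2}$. The dual-distance terms $\tfrac{1}{2\eta_1}\gnorm{y_1}{}{2}$ (optimality) and $\tfrac{1}{2\eta_1}\gnorm{y_1-\hat y}{}{2}$ (feasibility) are both controlled by $\gnorm{y_1-y^*}{}{}$ through the triangle inequality together with $\gnorm{\hat y}{}{}=\gnorm{y^*}{}{}+1$, the residual additive constant being absorbed into the $\mathcal{H}_*$-type quantity (note $\mathcal{H}_*$ already carries the factor $\gnorm{y^*}{}{}+1$). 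After equalizing these initialization terms, each bound has the shape $\tfrac{A}{K^2}+\tfrac{C(K+2)}{\mu_f K^2}$, and the feasibility version dominates term-by-term since its constant $C=4(\mathcal{H}_*^2+2(H_f^2+\sigma^2))\ge 8(H_f^2+\sigma^2)$ exceeds the optimality constant; hence it suffices to invert the feasibility bound.

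The key algebraic step is to split the slowly decaying term: since $\tfrac{C(K+2)}{\mu_f K^2}=\tfrac{C}{\mu_f K}+\tfrac{2C}{\mu_f K^2}$, the bound separates into a genuine $O(1/K)$ piece and a collection of $O(1/K^2)$ pieces. I then impose $\tfrac{C}{\mu_f K}\le\tfrac{\epsilon}{2}$, which forces $K=\Omega\big(\tfrac{C}{\mu_f\epsilon}\big)$ and yields the second element $\tfrac{16(\mathcal{H}_*^2+2(H_f^2+\sigma^2))}{\mu_f\epsilon}$ of the max. For the remaining $O(1/K^2)$ mass $\tfrac{A+2C/\mu_f}{K^2}$ I require it to be at most $\tfrac{\epsilon}{2}$, i.e. $K\ge\sqrt{2(A+2C/\mu_f)/\epsilon}$, and apply subadditivity of the square root, $\sqrt{a+b+c}\le\sqrt a+\sqrt b+\sqrt c$, to break this single threshold into three summands matching (up to the displayed numerical constants) $\tfrac{8D_X\sqrt{L_f+BL_g}}{\sqrt\epsilon}$, $\tfrac{2\sqrt2\gnorm{y_1-y^*}{}{}}{\sqrt{\eta_1\epsilon}}$, and $\tfrac{4\sqrt{2(\mathcal{H}_*^2+2(H_f^2+\sigma^2))}}{\sqrt{\mu_f\epsilon}}$. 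Since requiring $K$ to exceed the \emph{sum} of these three thresholds guarantees $K$ exceeds each one individually, the $O(1/K^2)$ mass falls below $\epsilon/2$, and this sum is exactly the first element of the max.

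Combining the two requirements, any $K\ge K_\epsilon$ as defined in \eqref{eq:K-epsilon} drives the feasibility bound—and hence the dominated optimality bound—below $\epsilon$, delivering a stochastic $(\epsilon,\epsilon)$-optimal point in the sense of Definition \ref{def:gaps} (both are expectation bounds). I would also record the harmless standing condition $K\ge 2$, needed for $(K+2)\le 2K$ and for the step-size policy \eqref{eq:non-ergo-stepsize-strong}, noting it is implied by the thresholds once $\epsilon$ is small.

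I expect the only friction to be bookkeeping rather than mathematics: reconciling the two distinct dual-distance terms $\gnorm{y_1}{}{}$ and $\gnorm{y_1-\hat y}{}{}$ against the single $\gnorm{y_1-y^*}{}{}$ appearing in $K_\epsilon$, and rounding the leading numerical constants (the factors of two from halving the $\epsilon$-budget, from $\gnorm{x_1-x^*}{}{}\le 2D_X$, and the $\sqrt2$'s introduced by the subadditivity step) upward so the stated thresholds genuinely sit above the inverted quantities. None of these affects the order of $K_\epsilon$, so the main obstacle is merely choosing the $\epsilon$-budget split and the constant rounding consistently across both bounds.
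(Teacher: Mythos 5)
Your proposal is correct and takes essentially the same route as the paper: the paper's own proof is exactly this term-by-term inversion, plugging $K\ge K_\epsilon$ into \eqref{eq:opt-1-non-ergo} and \eqref{eq:feasib-1-non-ergo}, splitting the $(K+2)/K^2$ term into its $1/K$ and $1/K^2$ pieces, and checking that the max in \eqref{eq:K-epsilon} drives each of the four resulting terms below $\epsilon/4$ (your $\epsilon/2$-budget plus square-root subadditivity is the same computation with marginally different constants, and your observation that the feasibility bound dominates is consistent with the paper checking feasibility first). The friction you flag is real but is inherited from the paper itself: \eqref{eq:K-epsilon} is stated with $\gnorm{y_1-y^*}{}{}$ while the theorem's bounds carry $\gnorm{y_1}{}{}$ and $\gnorm{y_1-\hat{y}}{}{}$, and your proposed fix of absorbing the residual $\gnorm{y^*-\hat{y}}{}{}\le 2\gnorm{y^*}{}{}+1$ into $\mathcal{H}_*$ does not work in general (e.g.\ $H_g=0$ and $B\ge\gnorm{y^*}{}{}+1$ give $\mathcal{H}_*=0$ while $\gnorm{y^*}{}{}$ is arbitrary), but the paper's one-line proof silently elides the identical mismatch, so your argument is no less rigorous than the original.
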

\begin{proof}
	Using \eqref{eq:feasib-1-non-ergo}, and \eqref{eq:K-epsilon}, we have $	\Ebb[\|[\psi(x_K)]_+\|] \leq \tfrac{\epsilon}{4}+\tfrac{\epsilon}{4}+\tfrac{\epsilon}{4}+\tfrac{\epsilon}{4}$. Similarly, using \eqref{eq:opt-1-non-ergo} and \eqref{eq:K-epsilon}, one can see $ \Ebb[\psi_0(x_K) - \psi(x^*)] \leq \epsilon$.
\end{proof}
Theorem \ref{thm:non-ergo-strong-semi} and Corollary \ref{cor:strong-convex} provide iteration complexity for the stochastic nonsmooth functional constrained optimization problem with strong convexity assumption. In general $ \augconex $ obtain the following convergence rate for the strongly convex problem 
\[
\mathcal{O}\big( \tfrac{D_x\sqrt{L_f+BL_g}}{\sqrt{\epsilon}}+ \tfrac{\mathcal{H}_*^2+H_f^2+\sigma^2}{\mu_f\epsilon}\big).
\] 
Considering convergence rate and choice of $\rho_{1}$, the overall convergence rate is of order $\mathcal{O}(\tfrac{1}{\rho_{1}K})$ meaning that choosing an extremely small value for $\rho_{1}$ will lead to divergence. Specifically, $\rho_{1}$ should be in order of $\Omega(K^{-1})$. Furthermore, one can observe that the change rate in $L_k$ is of order $\mathcal{O}(k^2)$. One important aspect of the convergence rate of $\augconex$ method is its low dependency on the Lipschitz constant $L_f$ as it comes with factor $\mathcal{O}(1/\sqrt{\epsilon})$. Also, the significance of $L_g$ is controlled by choice of $B$, i.e., $B\geq \|y^*\|+1$. 

 \section{Convergence Analysis of $ \augconex $ Method}\label{sec:convegence-analysis-augconex}
In this section, we provide a comprehensive analysis of Theorem \ref{thm:convex-case} and Theorem \ref{thm:non-ergo-strong-semi}. First, since $\augconex$ is a primal-dual algorithm, one can use the primal-dual gap as a measure of convergence. In particular, for two feasible solutions $z = (x,s,y)$ and $\bar{z} = (\bar{x},\bar{s},\bar{y})$, one can define the primal-dual gap as follows 
\[
Gap:= \Lcal(\bar{x},\bar{s},y) - \Lcal(x,s,\bar{y}).
\]
In the following key lemma, we construct a recursive relation in terms of the iterates $\{x_{k+1},s_{k+1},\tilde{y}_{k+1},\bar{y}_{k+1}\}_{k\geq 1}$ generated by Algorithm \ref{alg:augConEx}. Before formally stating the lemma,
notice from optimality of $s_{k+1}$ and the definition of $y_{k+1}$ based on \eqref{eq:Denotes} of Algorithm \ref{alg:augConEx}, one can say $y_{k+1}= -\grad h(s_{k+1})$ where
\[
h(s): = \inprod{-\wt{y}_k}{s} + \tfrac{\rho_k}{2}\gnorm{\ell_g(x_{k+1})+ \chi(x_{k+1}) - (1-\tau_k)V_k -s}{}{2}.
\]
Moreover, since $\grad h(s_{k+1})  + N_{\{s:s\leq \mathbf{0}\}}(s_{k+1})\ni \mathbf{0} $, one can conclude that $y_{k+1}\in  N_{\{s:s\leq \mathbf{0}\}}(s_{k+1})$ meaning $y_{k+1}\geq \mathbf{0}$. Now, we are ready to mention the Lemma \ref{lemma:importantlemma}. 
\begin{lemma}\label{lemma:importantlemma}
	Suppose $\tfrac{\eta_k}{2} \le \rho_k$ and $L_k\geq 2L_f+2BL_g$, where $B\geq 0$. Then for all $(x,s,y)\in X\times \mathbb{R}_{-}^m\times\mathbb{R}_{+}^m$ and $k=1,\dots, K-1$, we have
	\begin{align*}
		\Lcal(x_{k+1}, &s_{k+1}, y) - \Lcal(x,s,\wb{y}_{k+1}) - (1-\tau_k)[\Lcal(x_k, s_k, y) - \Lcal(x,s,\wb{y}_k)] \\
		&\le\tfrac{(L_k-\mu_f)\tau_k^2}{2}\gnorm{\tfrac{1}{\tau_k}[\wh{x}_k-(1-\tau_k)x_k] -x}{}{2} - \tfrac{L_k\tau_k^2}{2}\gnorm{\tfrac{1}{\tau_k}[x_{k+1} - (1-\tau_k)x_k] - x}{}{2} -\tau_k\inner{\delta_k }{\dot{x}_k-x} \nonumber\\
		&\quad + \tfrac{1}{2\eta_k}[\gnorm{y-\wt{y}_k}{}{2} - \gnorm{y-\wt{y}_{k+1}}{}{2} ] + \tfrac{(L_gD_x[\gnorm{y}{}{}-B]_++ H_g\gnorm{y}{}{})^2}{L_k-2(L_f + BL_g)}+ \tfrac{2(H_f^2+\gnorm{\delta_k}{}{2})}{L_k}- \tfrac{\mu_f\tau_k(1-\tau_k)}{2}\gnorm{x_k-x}{}{2}.  
	\end{align*}
	where $\ell_g(x_{k+1}) $ is defined as \eqref{eq:ell_g}, $\delta_k =  \mathfrak{F}(\hat{x}_k,\xi_k)-f'(\hat{x}_k)$ and $\dot{x}_k = \tfrac{\hat{x}_k-(1-\tau_{k})x_k}{\tau_k}$. 
\end{lemma}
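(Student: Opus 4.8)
The plan is to bound the Lagrangian-gap recursion by treating its primal and dual parts separately and then recombining. First I would split the left-hand side, using $\wb{y}_{k+1}-(1-\tau_k)\wb{y}_k=\tau_k y_{k+1}$ together with $V_{k+1}=g(x_{k+1})+\chi(x_{k+1})-s_{k+1}$ and $V_k=g(x_k)+\chi(x_k)-s_k$, into a purely primal combination $\psi_0(x_{k+1})-(1-\tau_k)\psi_0(x_k)-\tau_k\psi_0(x)$ plus the coupling term $\inner{y}{V_{k+1}-(1-\tau_k)V_k}-\tau_k\inner{y_{k+1}}{g(x)+\chi(x)-s}$. The engine for the primal part is the prox optimality of the $x_{k+1}$ step, which for every $u\in X$ gives a three-point inequality featuring the $L_k$-strongly convex quadratic. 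The decisive device is to apply it not at $u=x$ but at the convex combination $u=\tau_k x+(1-\tau_k)x_k\in X$: then $u-\wh{x}_k=\tau_k(x-\dot{x}_k)$ and $u-x_{k+1}=\tau_k(x-z_{k+1})$ with $z_{k+1}:=\tfrac1{\tau_k}[x_{k+1}-(1-\tau_k)x_k]$, which manufactures exactly the telescoping quadratics $\tfrac{L_k\tau_k^2}{2}\gnorm{\dot{x}_k-x}{}{2}$ and $-\tfrac{L_k\tau_k^2}{2}\gnorm{z_{k+1}-x}{}{2}$.

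Next I would convert the linear part of the prox inequality into function values. Writing $\Ffrak(\wh{x}_k,\xi_k)=f'(\wh{x}_k)+\delta_k$, the composite-nonsmooth bound \eqref{eq:smooth-nonsmooth} upper-bounds $f(x_{k+1})$ around $\wh{x}_k$ while $\mu_f$-strong convexity lower-bounds $f$ at $x$ and $x_k$; together they yield $f(x_{k+1})-(1-\tau_k)f(x_k)-\tau_k f(x)$ along with $-\tfrac{L_f}{2}\gnorm{x_{k+1}-\wh{x}_k}{}{2}$, $-H_f\gnorm{x_{k+1}-\wh{x}_k}{}{}$, and the strong-convexity surplus $\tfrac{\mu_f}{2}[\tau_k\gnorm{x-\wh{x}_k}{}{2}+(1-\tau_k)\gnorm{x_k-\wh{x}_k}{}{2}]$. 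Convexity of $\chi_0$ and of each $\chi_i$ (the latter paired with $y_{k+1}\ge\zero$, which holds by the normal-cone optimality of $s_{k+1}$) supplies the matching three-point combinations, and the identity $g'(\wh{x}_k)^\top(x_{k+1}-u)=\ell_g(x_{k+1})-\ell_g(u)$ with affinity of $\ell_g$ rewrites $\inner{g'(\wh{x}_k)y_{k+1}}{x_{k+1}-u}$ as $\inner{y_{k+1}}{\ell_g(x_{k+1})-(1-\tau_k)\ell_g(x_k)-\tau_k\ell_g(x)}$. Finally the stochastic term equals $-\tau_k\inner{\delta_k}{z_{k+1}-x}$; I split $z_{k+1}-x=(\dot{x}_k-x)+(z_{k+1}-\dot{x}_k)$, keep $-\tau_k\inner{\delta_k}{\dot{x}_k-x}$ (which vanishes in expectation), and use $\tau_k(z_{k+1}-\dot{x}_k)=x_{k+1}-\wh{x}_k$ so the remainder pairs with $\gnorm{x_{k+1}-\wh{x}_k}{}{}$.

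Then I would close the dual side by combining the negated $\ell_g$/$\chi$-coupling from the prox bound with the Lagrangian coupling. Using $V_{k+1}-\wt{V}_{k+1}=g(x_{k+1})-\ell_g(x_{k+1})$ and $\ell_g(x_k)+\chi(x_k)=V_k+s_k+(\ell_g(x_k)-g(x_k))$, the coupling collapses to $\inner{y-y_{k+1}}{\wt{V}_{k+1}-(1-\tau_k)V_k}=\inner{y-y_{k+1}}{d_k}$ plus leftover terms. The standard three-point identity for $\wt{y}_{k+1}=\wt{y}_k+\eta_k d_k$, combined with $y_{k+1}-\wt{y}_{k+1}=(\rho_k-\eta_k)d_k$ and the hypothesis $\tfrac{\eta_k}{2}\le\rho_k$, turns $\inner{y-y_{k+1}}{d_k}$ into the telescoping $\tfrac1{2\eta_k}[\gnorm{y-\wt{y}_k}{}{2}-\gnorm{y-\wt{y}_{k+1}}{}{2}]$ after discarding a nonpositive $\gnorm{d_k}{}{2}$ contribution. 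Every leftover is then killed by sign/complementarity: $\inner{y_{k+1}}{s_{k+1}}=0$, $\inner{y_{k+1}}{s_k}\le0$, $\tau_k\inner{y_{k+1}}{s}\le0$ (since $s,s_k\le\zero$, $y_{k+1}\ge\zero$), and $\inner{y_{k+1}}{\ell_g(x_k)-g(x_k)}\le0$, $\inner{y_{k+1}}{\ell_g(x)-g(x)}\le0$ by convexity of $g$ with $y_{k+1}\ge\zero$. Exactly one linearization error survives, namely $\inner{y}{g(x_{k+1})-\ell_g(x_{k+1})}\ge0$.

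The crux, and the step I expect to be most delicate, is the simultaneous bookkeeping of the budget $-\tfrac{L_k}{2}\gnorm{x_{k+1}-\wh{x}_k}{}{2}$ against the \emph{unbounded} multiplier $y$. I would bound the surviving error by $\gnorm{y}{}{}\big(\tfrac{L_g}{2}\gnorm{x_{k+1}-\wh{x}_k}{}{2}+H_g\gnorm{x_{k+1}-\wh{x}_k}{}{}\big)$ via \eqref{eq:sonstraint-smooth-nonsmooth-1}, split $\gnorm{y}{}{}$ into $B$ and $[\gnorm{y}{}{}-B]_+$ so that the $\tfrac{BL_g}{2}\gnorm{x_{k+1}-\wh{x}_k}{}{2}$ piece is charged to the budget (this is where $L_k\ge2L_f+2BL_g$ enters), and linearize the remaining quadratic via $\gnorm{x_{k+1}-\wh{x}_k}{}{2}\le 2D_x\gnorm{x_{k+1}-\wh{x}_k}{}{}$. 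One Young's inequality against the residual $\tfrac{L_k-2(L_f+BL_g)}{4}\gnorm{x_{k+1}-\wh{x}_k}{}{2}$ yields $\tfrac{(L_gD_x[\gnorm{y}{}{}-B]_++H_g\gnorm{y}{}{})^2}{L_k-2(L_f+BL_g)}$, while a second Young against a final $\tfrac{L_k}{4}\gnorm{x_{k+1}-\wh{x}_k}{}{2}$ absorbs $(\gnorm{\delta_k}{}{}+H_f)\gnorm{x_{k+1}-\wh{x}_k}{}{}$ into $\tfrac{2(H_f^2+\gnorm{\delta_k}{}{2})}{L_k}$; one verifies the four pieces $\tfrac{L_f}{2}+\tfrac{BL_g}{2}+\tfrac{L_k-2(L_f+BL_g)}{4}+\tfrac{L_k}{4}$ exactly exhaust $\tfrac{L_k}{2}$. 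Lastly the variance identity $\tau_k\gnorm{x-\wh{x}_k}{}{2}+(1-\tau_k)\gnorm{x_k-\wh{x}_k}{}{2}=\tau_k^2\gnorm{\dot{x}_k-x}{}{2}+\tau_k(1-\tau_k)\gnorm{x_k-x}{}{2}$ converts the strong-convexity surplus into $-\tfrac{\mu_f\tau_k^2}{2}\gnorm{\dot{x}_k-x}{}{2}$ (merged into $\tfrac{(L_k-\mu_f)\tau_k^2}{2}$) and $-\tfrac{\mu_f\tau_k(1-\tau_k)}{2}\gnorm{x_k-x}{}{2}$, completing the recursion.
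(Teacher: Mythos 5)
Your proposal is correct and follows essentially the same route as the paper's proof: the same splitting of the Lagrangian gap, the same prox and dual three-point inequalities with the $(\eta_k-2\rho_k)$ discard, the same sign/complementarity arguments for $s$, the same single surviving error $\inner{y}{g(x_{k+1})-\ell_g(x_{k+1})}$, and the same $B$-split plus Young's inequalities with the exact $\tfrac{L_k}{2}$ budget accounting. The only cosmetic difference is that you plug the convex combination $\tau_k x+(1-\tau_k)x_k$ into the prox inequality directly, whereas the paper evaluates its combined inequality at $x$ and at $x_k$ separately and recombines via the identities \eqref{eq:recurtion}; these two maneuvers produce identical terms.
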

\begin{proof}
	First, using optimality of $x_{k+1}$, we have for all $x\in X$
	\begin{equation}\label{eq:int_rel2}
		\begin{aligned}
			\inprod{f'(\wh{x}_k) + g'(\wh{x}_k)y_{k+1}}{x_{k+1}-x}&+ \inner{\delta_k }{\hat{x}_k-x} + \chi_0(x_{k+1}) - \chi_0(x) + \inner{\chi(x_{k+1})-\chi(x))}{y_{k+1}} \\
			& \le \tfrac{L_k}{2} [\gnorm{x-\wh{x}_k}{}{2} - \gnorm{x-x_{k+1}}{}{2} - \gnorm{x_{k+1}-\wh{x}_k}{}{2}]+\inner{\delta_k}{\hat{x}_k-x_{k+1}}.
		\end{aligned}
	\end{equation}
	Using strong-convexity and convexity of $f, g$ respectively, Lipschitz property of $f$ and definition of $\ell_g(x_{k+1})$, we have for all $x\in X$
	\begin{align*}
		f(x_{k+1}) &\le f(\wh{x}_k) + \inprod{f'(\wh{x}_k)}{x_{k+1}-\wh{x}_k} + \tfrac{L_f}{2}\gnorm{x_{k+1}-\wh{x}_k}{}{2} + H_f\gnorm{x_{k+1}-\wh{x}_k}{}{},\\
		f(\wh{x}_k) &\le f(x) + \inprod{f'(\wh{x}_k)}{\wh{x}_k-x}- \tfrac{\mu_f\gnorm{x-\hat{x}_k}{}{2}}{2},\\
		g(\wh{x}_k) &\le g(x) + \inprod{g'(\wh{x}_k)}{\wh{x}_k-x},\\
		\ell_g(x_{k+1})&= g(\wh{x}_k) + \inprod{g'(\wh{x}_k)}{x_{k+1}-\wh{x}_k}.
	\end{align*}
	Multiplying the last two relations by $y_{k+1}$, noting that $y_{k+1} \ge \mathbf{0}$ and then summing the above relations, we have for all $x \in X$
	\begin{align*}
		f(x_{k+1}) -f(x) &+ \inprod{y_{k+1}}{\ell_g(x_{k+1}) - g(x)}-  \tfrac{L_f}{2}\gnorm{x_{k+1}-\wh{x}_k}{}{2} - H_f\gnorm{x_{k+1}-\wh{x}_k}{}{} + \tfrac{\mu_f\gnorm{x-\hat{x}_k}{}{2}}{2} \\
		&\le \inprod{f'(\wh{x}_k) + g'(\wh{x}_k)y_{k+1}}{x_{k+1}-x}. 
	\end{align*}
	Using the above relation inside \eqref{eq:int_rel2}, we have for all $x\in X$
	\begin{equation}\label{eq:int_rel4}
		\begin{split}
			f(x_{k+1}) -& f(x) + \inprod{y_{k+1}}{\ell_g(x_{k+1}) -g(x)}+\inner{\delta_k }{\hat{x}_k-x} +\chi_0(x_{k+1}) - \chi_0(x) + \inner{\chi(x_{k+1})-\chi(x))}{y_{k+1}}\\ &\le  \tfrac{(L_k-\mu_f)\gnorm{x-\wh{x}_k}{}{2}}{2}- \tfrac{L_k\gnorm{x-x_{k+1}}{}{2}}{2}
			- \tfrac{(L_k-L_f)\gnorm{x_{k+1}-\wh{x}_k}{}{2}}{2} + H_f\gnorm{x_{k+1}-\wh{x}_k}{}{}+\inner{\delta_k}{\hat{x}_k-x_{k+1}}.
		\end{split}
	\end{equation}
	Similarly, using first-order optimality condition of $s_{k+1}$ and noting $y_{k+1}\geq \mathbf{0}$, we have for all $s \le \mathbf{0}$, 
	\begin{equation}\label{eq:int_rel3}
		\inprod{y_{k+1}}{s - s_{k+1}} \le 0.
	\end{equation}
	Also, let us consider these two equations 
	\begin{equation}\label{eq:recurtion}
		\begin{aligned}
			&(1-\tau_k)\gnorm{x_{k+1} - x_{k}}{}{2} + \tau_k\gnorm{x_{k+1}-x}{}{2} = \gnorm{x_{k+1}-(1-\tau_k)x_k-\tau_kx}{}{2} + \tau_k(1-\tau_k)\gnorm{x_k-x}{}{2},\\
			&(1-\tau_k)\gnorm{\hat{x}_k - x_{k}}{}{2} + \tau_k\gnorm{\hat{x}_k-x}{}{2} = \gnorm{\hat{x}_k-(1-\tau_k)x_k-\tau_kx}{}{2} + \tau_k(1-\tau_k)\gnorm{x_k-x}{}{2},\quad \forall x\in X.
		\end{aligned}
	\end{equation}
	Now, we sum the following four relations: (1) Multiply \eqref{eq:int_rel4} by $\tau_k \in (0,1]$, (2) multiply \eqref{eq:int_rel3} by $\tau_k$, (3) use $x=x_k$ in \eqref{eq:int_rel4} and multiply by $1-\tau_k$, and (4) use $s= s_k$ in \eqref{eq:int_rel3} and multiply by $1-\tau_k$. Then regarding \eqref{eq:recurtion}, we have for all $x \in X, s \le \mathbf{0}$
	\begin{align}
		&f(x_{k+1}) + \chi_0(x_{k+1}) - \tau_k(f(x)+ \chi_0(x)) -(1-\tau_k)(f(x_k)+ \chi_0(x_k))+\inner{\delta_k }{\hat{x}_k-\tau_kx - (1-\tau_k)x_k}\label{eq:int_rel5}\\
		&\quad  + \inprod{y_{k+1}}{\ell_g(x_{k+1})+ \chi(x_{k+1}) - s_{k+1} - \tau_k (g(x)+\chi(x)-s) - (1-\tau_k)(g(x_k)+\chi(x_k)-s_k)}\nonumber\\
		& \le  \tfrac{(L_k-\mu_f)\tau_k^2}{2}\gnorm{\tfrac{1}{\tau_k}[\wh{x}_k-(1-\tau_k)x_k] -x}{}{2} - \tfrac{L_k\tau_k^2}{2}\gnorm{\tfrac{1}{\tau_k}[x_{k+1} - (1-\tau_k)x_k] - x}{}{2}\nonumber\\
		&\quad - \tfrac{\mu_f\tau_k(1-\tau_k)}{2}\gnorm{x_k-x}{}{2}- \tfrac{L_k-L_f}{2}\gnorm{x_{k+1} - \wh{x}_k}{}{2} +  H_f\gnorm{x_{k+1}-\wh{x}_k}{}{}+\inner{\delta_k}{\hat{x}_k-x_{k+1}}\nonumber.
	\end{align}
	Finally, noting that $\wt{y}_{k+1} = \argmin_{y} -\inprod{\wt{V}_{k+1} - (1-\tau_k)V_k}{y} + \tfrac{1}{2\eta_k} \gnorm{y-\wt{y}_k}{}{2}$, we have
	\begin{align*}
		\inprod{\wt{V}_{k+1} - (1-\tau_k)V_k}{y-\wt{y}_{k+1}} \le \tfrac{1}{2\eta_k}[\gnorm{y-\wt{y}_k}{}{2} - \gnorm{y-\wt{y}_{k+1}}{}{2} - \gnorm{\wt{y}_{k+1}-\wt{y}_k}{}{2}].
	\end{align*}
	Adding the above relation to \eqref{eq:int_rel5} and noting definitions of $\wt{V}_{k+1}, V_k$ with respect to \eqref{eq:Denotes},  we have
	\begin{equation}\label{eq:int_rel6}
		\begin{aligned}
			&f(x_{k+1}) + \chi_0(x_{k+1}) - \tau_k(f(x)+ \chi_0(x)) -(1-\tau_k)(f(x_k)+ \chi_0(x_k))+\inner{\delta_k }{\hat{x}_k-\tau_kx - (1-\tau_k)x_k}\\
			&\quad-\inprod{\tau_k y_{k+1}}{g(x)+\chi(x)-s} + \inprod{y}{\wt{V}_{k+1} - (1-\tau_k)V_k}\\
			& \le \inprod{\wt{y}_{k+1}-y_{k+1}}{\wt{V}_{k+1} - (1-\tau_k)V_k} - \tfrac{L_k-L_f}{2}\gnorm{x_{k+1} - \wh{x}_k}{}{2} +  H_f\gnorm{x_{k+1}-\wh{x}_k}{}{}\\
			&\quad +\tfrac{(L_k-\mu_f)\tau_k^2}{2}\gnorm{\tfrac{1}{\tau_k}[\wh{x}_k-(1-\tau_k)x_k] -x}{}{2} - \tfrac{L_k\tau_k^2}{2}\gnorm{\tfrac{1}{\tau_k}[x_{k+1} - (1-\tau_k)x_k] - x}{}{2}\\
			&\quad + \tfrac{1}{2\eta_k}[\gnorm{y-\wt{y}_k}{}{2} - \gnorm{y-\wt{y}_{k+1}}{}{2} - \gnorm{\wt{y}_{k+1}-\wt{y}_k}{}{2}]+\inner{\delta_k }{\hat{x}_k-x_{k+1}}- \tfrac{\mu_f\tau_k(1-\tau_k)}{2}\gnorm{x_k-x}{}{2}.
		\end{aligned}
	\end{equation}
	From the definitions of $y_{k+1}$ and $\wt{y}_{k+1}$ in \eqref{eq:conceptual-x-s-update} and \eqref{eq:Denotes} of Algorithm \ref{alg:augConEx}, we have 
	$\inprod{\wt{y}_{k+1}-y_{k+1}}{\wt{V}_{k+1} - (1-\tau_k)V_k} = (\eta_k-\rho_k) \gnorm{\wt{V}_{k+1} - (1-\tau_k)V_k}{}{2}$. Moreover, $-\tfrac{1}{2\eta_k}\gnorm{\wt{y}_{k+1} - \wt{y}_k}{}{2} = -\tfrac{\eta_k}{2}\gnorm{\wt{V}_{k+1} - (1-\tau_k)V_k}{}{2}$. Using these two relations in \eqref{eq:int_rel6}, alongside with \eqref{eq:sp-problem-conj} and Cauchy-Schwarz inequality for $\inner{\delta_k }{\hat{x}_k-x_{k+1}}$ we have 
	\begin{align*}
		&\Lcal(x_{k+1}, s_{k+1}, y) - \Lcal(x,s,\wb{y}_{k+1}) - (1-\tau_k)[\Lcal(x_k, s_k, y) - \Lcal(x,s,\wb{y}_k)] \\
		& \le \tfrac{\eta_k - 2\rho_k}{2}\gnorm{\wt{V}_{k+1} - (1-\tau_k)V_k}{}{2} - \tfrac{L_k-L_f}{2}\gnorm{x_{k+1} - \wh{x}_k}{}{2} +  (H_f+\|\delta_k\|)\gnorm{x_{k+1}-\wh{x}_k}{}{}\nonumber\\
		&\quad +\tfrac{(L_k-\mu_f)\tau_k^2}{2}\gnorm{\tfrac{1}{\tau_k}[\wh{x}_k-(1-\tau_k)x_k] -x}{}{2} - \tfrac{L_k\tau_k^2}{2}\gnorm{\tfrac{1}{\tau_k}[x_{k+1} - (1-\tau_k)x_k] - x}{}{2}- \tfrac{\mu_f\tau_k(1-\tau_k)}{2}\gnorm{x_k-x}{}{2} \nonumber\\
		&\quad + \tfrac{1}{2\eta_k}[\gnorm{y-\wt{y}_k}{}{2} - \gnorm{y-\wt{y}_{k+1}}{}{2} ] + \inprod{y}{g(x_{k+1}) - \ell_g(x_{k+1})}-\inner{\delta_k }{\hat{x}_k-\tau_kx - (1-\tau_k)x_k}.
	\end{align*}
	Considering the following relations, one can conclude the proof
	\begin{subequations}
		\begin{equation*}
			(H_f+\|\delta_k\|)\gnorm{x_{k+1}-\wh{x}_k}{}{}\nonumber \leq \tfrac{L_k\gnorm{x_{k+1}-\wh{x}_k}{}{2}\nonumber}{4} + \tfrac{2(H_f^2+\gnorm{\delta_k}{}{2})}{L_k},
		\end{equation*}
		\begin{equation*}
			\begin{aligned}
				\langle y , g(x_{k+1})- \ell_g(x_{k+1})\rangle \leq& \tfrac{L_g}{2}  \gnorm{y}{}{}\gnorm{x_{k+1}- \hat{x}_k}{}{2} + H_g\gnorm{y}{}{}\gnorm{x_{k+1}- \hat{x}_k}{}{}, \\
				& = \tfrac{L_g}{2} (\gnorm{y}{}{} - B)\gnorm{x_{k+1}- \hat{x}_k}{}{2} + \tfrac{B L_g}{2}\gnorm{x_{k+1}- \hat{x}_k}{}{2} + H_g\gnorm{y}{}{}\gnorm{x_{k+1}- \hat{x}_k}{}{},\\
				& \leq (L_gD_x [\gnorm{y}{}{} - B]_++H_g\gnorm{y}{}{}) \gnorm{x_{k+1}- \hat{x}_k}{}{} + \tfrac{B L_g}{2}\gnorm{x_{k+1}- \hat{x}_k}{}{2},
			\end{aligned}
		\end{equation*}
		\begin{equation*}
			(L_gD_x  [\gnorm{y}{}{} - B]_++H_g\gnorm{y}{}{})\gnorm{x_{k+1}- \hat{x}_k}{}{} - \tfrac{L_k-2(L_f + BL_g)}{4} \gnorm{x_{k+1}- \hat{x}_k}{}{2}\leq \tfrac{(L_gD_x  [\gnorm{y}{}{} - B]_++H_g\gnorm{y}{}{})^2}{L_k-2(L_f + BL_g)}.
		\end{equation*}
	\end{subequations}
\end{proof}
Lemma \ref{lemma:importantlemma} is crucial to obtain optimality and feasibility gaps in Definition \ref{def:gaps}. Indeed, we use Lemma \ref{lemma:importantlemma} to construct convergence results for convex and strongly convex settings of the optimization problem in \ref{eq:prima-problem}. In the following section, we provide the detailed technical results we used to obtain Theorem \ref{thm:convex-case}. 
\subsection{Convergence analysis for the convex problem}
\paragraph{Proof of Theorem \ref{thm:convex-case}}
First, assume the following conditions for the convex problem
\begin{equation}\label{eq:conditions}
	\tfrac{\eta_k}{2}\leq \rho_k, \quad L_k\geq 2(L_f+ BL_g + \rho_k(M_g+M_\chi)^2),\quad \tfrac{1}{\eta_k}\leq \tfrac{1-\tau_k}{\eta_{k-1}}.
\end{equation}
Thus, one can easily verify that the step-size policy in \eqref{eq:non-ergo-stepsize-non-smooth-eta} satisfies the conditions in \eqref{eq:conditions} for $k\geq 2$. 
In the view of \eqref{eq:Denotes} and \eqref{eq:non-ergo-stepsize-non-smooth-eta}, we let $\tilde{x}_k  = \tfrac{1}{\tau_k}[\hat{x}_k-(1-\tau_k)x_k]$ and $\tilde{x}_{k+1}  = \tfrac{1}{\tau_k}[x_{k+1}-(1-\tau_k)x_k]$. Taking the expectation from both sides regarding \eqref{eq:SO_F_objective}, we can write the following recursion for $k\geq 2$
\begin{equation}\label{eq:non-ergo-5-non-smooth-recur-eta}
	\begin{aligned}
		&\Ebb[\Lcal(x_{k+1},s_{k+1}, y)- \Lcal(x,s,\bar{y}_{k+1})  + \tfrac{1}{2\eta_k}\gnorm{\tilde{y}_{k+1}-y}{}{2}]\leq \Ebb\big[
		(1-\tau_k)[ \Lcal(x_{k},s_{k}, y)- \Lcal(x,s,\bar{y}_{k})
		+  \tfrac{1}{2\eta_{k-1}}\gnorm{\tilde{y}_k-y}{}{2}\big]\\
		&+\Ebb\big[\tfrac{\tau_k^2L_k}{2}[\gnorm{\tilde{x}_k-x}{}{2}- \gnorm{\tilde{x}_{k+1}-x}{}{2}]\big] -\tau_k\Ebb[\inner{\delta_k }{\dot{x}_k-x}]
		+ \Ebb\big[ \tfrac{(L_gD_x[\gnorm{y}{}{}-B]_++ H_g\gnorm{y}{}{})^2}{L_k-2(L_f + BL_g)}\big]+ \tfrac{2(H_f^2+ \sigma^2)}{L_k}.
	\end{aligned}
\end{equation}
Consequently, one can write the following recursion 
\begin{equation}\label{eq:recur-nonsmooth-eta}
	\begin{aligned}
		&\Ebb[\Lcal(x_{K},s_{K}, y)- \Lcal(x,s,\bar{y}_{K})] \leq  [\prod_{k=2}^{K-1}(1-\tau_k)]\Ebb[\Lcal(x_{2},s_{2}, y)- \Lcal(x,s,\bar{y}_{2})+ \tfrac{1}{2\eta_{1}}\gnorm{\tilde{y}_2-y}{}{2}]\\
		& \quad  +\Ebb\big[ \sum_{k=2}^{K-1} [\prod_{i=k+1}^{K-1} (1-\tau_{i})] \tfrac{\tau_k^2L_k}{2}[\gnorm{\tilde{x}_k-x}{}{2}-\gnorm{\tilde{x}_{k+1}-x}{}{2}]\big] -\Ebb\big[\sum_{k=2}^{K-1}[\prod_{i=k+1}^{K-1}(1-\tau_{i})]\tau_k\inner{\delta_k}{\dot{x}_k-x}\big] \\
		& \quad + \sum_{k=2}^{K-1}\big[[ \prod_{i=k+1}^{K-1} (1-\tau_{i})] \Ebb\big[ \tfrac{(L_gD_x[\gnorm{y}{}{}-B]_++ H_g\gnorm{y}{}{})^2}{L_k-2(L_f + BL_g)}\big] + \tfrac{2(H_f^2+ \sigma^2)}{L_k}\big].
	\end{aligned}
\end{equation}
Now, let us simplify each line in the RHS of \eqref{eq:recur-nonsmooth-eta}. Note that from Lemma \ref{lemma:importantlemma} and $L_1 = L, \eta_{1} = \tfrac{\rho_{1}}{K}, \hat{x}_1 = x_1 , \tilde{y}_1 = y_1, \tau_1=1$ , we have 
\begin{equation*}
	\begin{aligned}
		\Ebb&[\Lcal(x_{2},s_{2}, y)-\Lcal(x,s,\bar{y}_{2})+ \tfrac{1}{2\eta_{1}}\gnorm{\tilde{y}_2-y}{}{2}]\leq (1-\tau_1)\Ebb[(\Lcal(x_{1},s_{1}, y)- \Lcal(x,s,\bar{y}_{1}))]\\
		& \quad + 2LD_X^2 + \tfrac{K}{2\rho_{1}}\Ebb[\gnorm{y_1 - y}{}{2}] + \Ebb\big[\tfrac{(L_gD_x  [\gnorm{y}{}{} - B]_++H_g\gnorm{y}{}{})^2}{2\rho_1K(M_g + M_\chi)^2}\big] + \tfrac{H_f^2 + \sigma^2}{\rho_1K(M_g + M_\chi)^2} - \Ebb[\inner{\delta_1}{\dot{x}_1-x}].
	\end{aligned}
\end{equation*}
Noting that $\tau_1 = 1$ and $\prod_{k=2}^{K-1}(1-\tau_k) = \tfrac{2}{K(K-1)}$ we have the following relation for the first line in RHS of \eqref{eq:recur-nonsmooth-eta}
\begin{equation}\label{eq:first summation}
	\begin{aligned}
		[\prod_{k=2}^{K-1}(1-\tau_k)]&\Ebb[\Lcal(x_{2},s_{2}, y)- \Lcal(x,s,\bar{y}_{2})+ \tfrac{1}{2\eta_{1}}\gnorm{\tilde{y}_2-y}{}{2}]\leq \tfrac{4LD_X^2}{K(K-1)} + \tfrac{1}{(K-1)\rho_1}\Ebb[\gnorm{y_1 - y}{}{2}]\\
		&\quad +\Ebb\big[\tfrac{(L_gD_x  [\gnorm{y}{}{} - B]_++H_g\gnorm{y}{}{})^2}{K^2(K-1)\rho_1(M_g + M_\chi)^2}\big]+ \tfrac{2(H_f^2 + \sigma^2)}{K^2(K-1)\rho_1(M_g + M_\chi)^2} - \tfrac{2\Ebb[\inner{\delta_1}{\dot{x}_1-x}]}{K(K-1)},
	\end{aligned}
\end{equation}
For the first summation  in the second line of \eqref{eq:recur-nonsmooth-eta} we have 
\begin{equation}\label{eq:second summation}
	\begin{aligned}
		\Ebb\big(\sum_{k=2}^{K-2}& [\prod_{i=k+1}^{K-1} (1-\tau_{i})] \tfrac{\tau_k^2L_k}{2}[\gnorm{\tilde{x}_k-x}{}{2}-\gnorm{\tilde{x}_{k+1}-x}{}{2}]\big) +\Ebb\bracket[\big]{\tfrac{\tau_{K-1}^2L_{K-1}}{2}[\gnorm{\tilde{x}_{K-1}-x}{}{2}-\gnorm{\tilde{x}_{K}-x}{}{2}]}\\
		&  \leq \Ebb\big( \sum_{k=2}^{K-2} \tfrac{ k (k+1)}{(K-1)K} \tfrac{4}{(k+1)^2} \tfrac{ L}{2}[\gnorm{\tilde{x}_k-x}{}{2}-\gnorm{\tilde{x}_{k+1}-x}{}{2}]\big) + \tfrac{8D_X^2L}{K^2} \\
		& = \Ebb\big(\sum_{k=2}^{K-2} \tfrac{2 k }{(K-1)K (k+1)} L [\gnorm{\tilde{x}_k-x}{}{2}-\gnorm{\tilde{x}_{k+1}-x}{}{2}]\big) + \tfrac{8D_X^2L}{K^2}\\
		& = \tfrac{2L}{(K-1)K}\Ebb \Big(\big(\sum_{k=2}^{K-2} \big[\tfrac{k+1}{k+2}
		-\tfrac{k}{k+1}\big] \gnorm{\tilde{x}_{k+1}-x}{}{2} \big)+ \tfrac{2}{3}  \gnorm{\tilde{x}_{2}-x}{}{2}\Big) + \tfrac{8D_X^2L}{K^2} \\
		&  = \tfrac{2L}{(K-1)K} \sum_{k=2}^{K-2} \big(\tfrac{k+1}{k+2} - \tfrac{k}{k+1}\big) \Ebb(\gnorm{\tilde{x}_{k+1}-x}{}{2})  + \tfrac{4L}{3(K-1)K}  \Ebb[\gnorm{\tilde{x}_{k+1}-x}{}{2}] + \tfrac{8D_X^2L}{K^2}\\
		& \leq  \tfrac{8L}{(K-1)K} D_x^2 + \tfrac{16L}{3(K-1)K} D_x^2 + \tfrac{8L}{K^2}D_X^2\leq  \tfrac{22L}{(K-1)K}D_X^2.
	\end{aligned}
\end{equation}
The last summation line in the RHS of \eqref{eq:recur-nonsmooth-eta} has the following upper bound.
\begin{equation}\label{eq:third summation}
	\begin{aligned}
		&\sum_{k=2}^{K-2}\big[[ \prod_{i=k+1}^{K-1} (1-\tau_{i})] \Ebb\big[ \tfrac{(L_gD_x[\gnorm{y}{}{}-B]_++ H_g\gnorm{y}{}{})^2}{L_k-2(L_f + BL_g)}\big] + \tfrac{2(H_f^2+ \sigma^2)}{L_k}\big] + \big[\Ebb[\tfrac{(L_gD_x  [\gnorm{y}{}{} - B]_++H_g\gnorm{y}{}{})^2}{L_{K-1}-2(L_f+ BL_g )}] + \tfrac{2(H_f^2+ \sigma^2)}{L_{K-1}}\big]\\
		& = \Big(\tfrac{\Ebb[(L_gD_x  [\gnorm{y}{}{} - B]_++H_g\gnorm{y}{}{})^2]}{(K-1)K(L-2(L_f+ BL_g ))} + \tfrac{2(H_f^2+ \sigma^2)}{(K-1)KL}\Big) \sum_{k=2}^{K-2} k(k+1)  + \big[\tfrac{\Ebb[(L_gD_x  [\gnorm{y}{}{} - B]_++H_g\gnorm{y}{}{})^2]}{L-2(L_f+ BL_g )} + \tfrac{2(H_f^2+ \sigma^2)}{L}\big]\\
		&  \leq \Big(\tfrac{\Ebb[(L_gD_x  [\gnorm{y}{}{} - B]_++H_g\gnorm{y}{}{})^2]}{(K-1)K(L-2(L_f+ BL_g ))} + \tfrac{2(H_f^2+ \sigma^2)}{(K-1)KL}\Big) \tfrac{ (K-1)^3}{3} + \big[\tfrac{\Ebb[(L_gD_x  [\gnorm{y}{}{} - B]_++H_g\gnorm{y}{}{})^2]}{L-2(L_f+ BL_g )} + \tfrac{2(H_f^2+ \sigma^2)}{L}\big] \\
		& \leq  \tfrac{K\Ebb[(L_gD_x  [\gnorm{y}{}{} - B]_++H_g\gnorm{y}{}{})^2]}{3(L-2(L_f+ BL_g ))} +\tfrac{\Ebb[(L_gD_x  [\gnorm{y}{}{} - B]_++H_g\gnorm{y}{}{})^2]}{L-2(L_f+ BL_g )} +  \tfrac{2K(H_f^2+ \sigma^2)}{3L}+ \tfrac{2(H_f^2+ \sigma^2)}{L}.
	\end{aligned}
\end{equation} 

Thus from \eqref{eq:first summation}, \eqref{eq:second summation}, and \eqref{eq:third summation}, the recursion in \eqref{eq:recur-nonsmooth-eta} can be written as follows 
\begin{equation}\label{eq:recur-nonsmooth-eta-4}
	\begin{aligned}
		&\Ebb(\Lcal(x_{K},s_{K}, y)- \Lcal(x,s,\bar{y}_{K})) \leq  \tfrac{26L}{(K-1)K}D_X^2 + \tfrac{1}{(K-1)\rho_{1}}\Ebb[\gnorm{y_1 - y}{}{2}]  + \tfrac{K(2(H_f^2+ \sigma^2) +\Ebb[(L_gD_x  [\gnorm{y}{}{} - B]_++H_g\gnorm{y}{}{})^2] )}{3(L-2(L_f+ BL_g ))} \\
		& \quad+ \tfrac{2(H_f^2+ \sigma^2) +\Ebb[(L_gD_x  [\gnorm{y}{}{} - B]_++H_g\gnorm{y}{}{})^2]}{L-2(L_f+ BL_g )} +\tfrac{\Ebb[(L_gD_x  [\gnorm{y}{}{} - B]_++H_g\gnorm{y}{}{})^2]}{K^2(K-1)\rho_1(M_g + M_\chi)^2}+ \tfrac{2(H_f^2 + \sigma^2)}{K^2(K-1)\rho_1(M_g + M_\chi)^2}\\
		&-\Ebb\big[\sum_{k=2}^{K-1}[\prod_{i=k+1}^{K-1}(1-\tau_{i})]\tau_k\inner{\delta_k}{\dot{x}_k-x}\big]  - \tfrac{2\Ebb[\inner{\delta_1}{\dot{x}_1-x}]}{K(K-1)}  .
	\end{aligned}
\end{equation}
We know that for any $(x_K,s_K,y)\in X\times\mathbb{R}^m_-\times\mathbb{R}^m_+$, $ \Lcal(x_K,y)\leq \Lcal(x_K,s_K,y)$ and for $s =0$ we have $\Lcal(x,\bar{y}_K) = \Lcal(x,0,\bar{y}_K)$. Therefore, we have the following
\begin{equation}\label{eq:conjugate-1}
	\Lcal(x_K,y) - \Lcal(x,\bar{y}_K) \leq \Lcal(x_K,s_K,y) - \Lcal(x,0,\bar{y}_K) \quad \forall (x,y)\in X\times \mathbb{R}^m_+.
\end{equation}
Now, let us choose $ y =0 $. we know $ \bar{y}_K\geq 0$ and $\psi(x^*)\leq 0$ thus $ \langle \bar{y}_K, \psi(x^*)\rangle \leq 0 $. Also note that $\Ebb[\inner{\delta_k}{\dot{x}_k-x^*}] = 0$. Therefore, regarding \eqref{eq:recur-nonsmooth-eta-4} and \eqref{eq:conjugate-1}, we have
\begin{equation*}\label{eq:optimality-gap}
	\Ebb[\psi_0(x_K)- \psi_0(x^*)]\leq\tfrac{26L}{(K-1)K}D_X^2+ \tfrac{1}{(K-1)\rho_{1}}\gnorm{y_1}{}{2} + \tfrac{2(K+3)(H_f^2+ \sigma^2)}{3(L-2(L_f+ BL_g ))}+ \tfrac{2(H_f^2 + \sigma^2)}{K^2(K-1)\rho_1(M_g + M_\chi)^2},
\end{equation*}
Applying the step size policy in \eqref{eq:non-ergo-stepsize-non-smooth-eta} we obtain 
\begin{equation*}
	\begin{aligned}
		\Ebb[\psi_0(x_K)- \psi_0(x^*)]\leq&\tfrac{13D_X\sqrt{120K(\mathcal{H}_*^2 + 2(H_f^2+ \sigma^2))}}{60(K-1)}+ \tfrac{1}{(K-1)\rho_{1}}\gnorm{y_1}{}{2} + \tfrac{52D_X^2(L_f+ BL_g +\rho_{1} K(M_g + M_\chi)^2)}{(K-1)K}\\
		& \quad+\tfrac{80D_X(K+3)(H_f^2+ \sigma^2)}{K(240D_X\rho_{1}(M_g+M_\chi)^2 + \sqrt{120K(\mathcal{H}_*^2 + 2(H_f^2+\sigma^2))})}+ \tfrac{2(H_f^2 + \sigma^2)}{K^2(K-1)\rho_1(M_g + M_\chi)^2}.
	\end{aligned}
\end{equation*}
Now, let us move to the feasibility gap. First, observe that 
\begin{equation*}\label{eq:first-frasib}
	\Lcal(x_K,y^*) - \Lcal(x^*,y^*)\geq 0,
\end{equation*}

implying that 
\begin{equation*}\label{eq:second-feasib}
	\psi_0(x_K) + \inner{y^*}{\psi(x_K)} - \psi_0(x^*)\geq 0 ,
\end{equation*}

which in view of the below relation 
\begin{equation*}\label{eq:feasib3}
	\inner{y^*}{\psi(x_K)}\leq \inner{y^*}{[\psi(x_K)]_+}\leq \gnorm{y^*}{}{}\gnorm{[\psi(x_K)]_+}{}{},
\end{equation*}
we have 
\begin{equation}\label{eq:haty}
	\psi_0(x_K) +\gnorm{y^*}{}{}\gnorm{[\psi(x_K)]_+}{}{} - \psi_0(x^*)\geq 0.
\end{equation}
Moreover, $\Lcal(x_K,\hat{y})-\Lcal(x^*,\bar{y}_K)\geq\Lcal(x_K,\hat{y})-\Lcal(x^*,y^*) = \psi_0(x_K) +(\gnorm{y^*}{}{}+1)\gnorm{[\psi(x_K)]_+}{}{} - \psi_0(x^*)$ where we used the definition of $\hat{y}$ as $\hat{y} = (\gnorm{y^*}{}{}+1)\tfrac{[\psi(\bar{x}_K)]_+}{\gnorm{[\psi(x_K)]_+}{}{}}$. Therefore,  considering \eqref{eq:conjugate-1}, \eqref{eq:haty} and $\Ebb[\inner{\delta_k}{\dot{x}_k-x^*}] = 0$, the feasibility gap can be written as 
\begin{equation*}
	\begin{aligned}
		\Ebb(\gnorm{[\psi(x_K)]_+}{}{})\leq  \Ebb(\Lcal(x_K,\hat{y})- &\Lcal(x^*,\bar{y}_K)) \leq  \tfrac{13D_X\sqrt{120K(\mathcal{H}_*^2 + 2(H_f^2+ \sigma^2))}}{6(K-1)} + \tfrac{52D_X^2(L_f+ BL_g + \rho_{1}K(M_g + M_\chi)^2)}{(K-1)K}\\
		& \quad + \tfrac{40D_X(K+3)(\mathcal{H}_*^2+2(H_f^2+ \sigma^2) )}{K(240D_X\rho_{1}(M_g+M_\chi)^2 + \sqrt{120K(\mathcal{H}_*^2 + 2(H_f^2+\sigma^2))})}+ \tfrac{1}{(K-1)\rho_{1}}\gnorm{y_1 - \hat{y}}{}{2} \\
		&\quad +\tfrac{\mathcal{H}_*^2+2(H_f^2 + \sigma^2)}{K^2(K-1)\rho_1(M_g + M_\chi)^2}.
	\end{aligned}
\end{equation*}  
\subsection{Convergence analysis for the strongly convex case}
Before stating the convergence results for the strongly convex case, let us mention the following lemma  
\begin{lemma}\label{lemma:Dinh's strong}
	(Lemma 15 of \cite{zhu2022new}) Given \( \mu_f \geq 0 \), \( L_k \geq \mu_f \), and \( \tau_{k} \in (0,1) \), let \( m_k := \tfrac{L_k}{L_{k-1}} \) and \( \{x_k\}\) be a given sequence in \( \mathbb{R}^n \). We define \( \hat{x}_k := x_k + \beta_k \left( x_k - x_{k-1} \right) \), where \( \beta_k := \tfrac{(1 - \tau_{k-1})\tau_{k-1}}{\tau_{k-1}^2+m_k\tau_{k}} \).
	
	Assume that the following two conditions hold
	\begin{subequations}\label{eq:lemma15-cond}
		\begin{equation}
			L_{k-1} (1-\tau_{k})\tau_{k-1}^2 + \mu_f(1-\tau_{k})\tau_{k} \geq (L_k-\mu_f)\tau_{k}^2,
		\end{equation}
		\begin{equation}
			L_{k-1} (\tau_{k-1}^2+ m_k\tau_{k})m_k\tau_{k}\geq (L_k-\mu_f)\tau_{k-1}^2.
		\end{equation}
	\end{subequations}
	Then, for any \( x \in \mathbb{R}^n \), we have
	\[
	(L_k-\mu_f)\tau_{k}^2\gnorm{\tfrac{1}{\tau_{k}}[\hat{x}_k-(1-\tau_{k})x_k]-x}{}{2} - \mu_f\tau_{k}(1-\tau_{k})\gnorm{x_k-x}{}{2}\leq 	(1-\tau_{k})L_{k-1} \tau_{k-1}^2\gnorm{\tfrac{1}{\tau_{k-1}}[x_k-(1-\tau_{k-1})x_{k-1}]-x}{}{2}. 
	\]
	
\end{lemma}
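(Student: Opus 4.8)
The plan is to regard the inequality as a comparison of three quadratic functions of the free point $x$ and to reduce it to a one–dimensional statement about a scalar quadratic. First I would expose the centers of the three squared norms. Since $\hat{x}_k = x_k + \beta_k(x_k - x_{k-1})$, a direct computation gives $\hat{x}_k - (1-\tau_k)x_k = \tau_k x_k + \beta_k(x_k-x_{k-1})$, so that
\[
\tfrac{1}{\tau_k}[\hat{x}_k-(1-\tau_k)x_k] = x_k + \tfrac{\beta_k}{\tau_k}(x_k-x_{k-1}), \qquad \tfrac{1}{\tau_{k-1}}[x_k-(1-\tau_{k-1})x_{k-1}] = x_k + \tfrac{1-\tau_{k-1}}{\tau_{k-1}}(x_k-x_{k-1}).
\]
Writing $a := x_k - x_{k-1}$, all three centers lie on the line $\{x_k + \lambda a\}$ at signed offsets $\lambda_p := \beta_k/\tau_k$, $0$, and $\lambda_q := (1-\tau_{k-1})/\tau_{k-1}$ respectively. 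Setting $A := (L_k-\mu_f)\tau_k^2$, $B := \mu_f\tau_k(1-\tau_k)$, $C := (1-\tau_k)L_{k-1}\tau_{k-1}^2$ and substituting $x = x_k + y$, the claim becomes $A\gnorm{y-\lambda_p a}{}{2} - B\gnorm{y}{}{2} \le C\gnorm{y-\lambda_q a}{}{2}$ for all $y$.

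\emph{Orthogonal split.} Decomposing $y = ta + y_\perp$ with $y_\perp \perp a$, the difference $\text{RHS}-\text{LHS}$ separates as $\gamma\gnorm{y_\perp}{}{2} + (\gamma t^2 - 2\ell t + \nu)\gnorm{a}{}{2}$, where $\gamma := C-A+B$, $\ell := C\lambda_q - A\lambda_p$, and $\nu := C\lambda_q^2 - A\lambda_p^2$. Hence the inequality holds for all $y$ iff $\gamma \ge 0$ and the scalar quadratic $\gamma t^2 - 2\ell t + \nu$ is nonnegative for all $t$. The first requirement, $C - A + B \ge 0$, is exactly the first inequality of \eqref{eq:lemma15-cond}.

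\emph{Discriminant step.} Given $\gamma \ge 0$, the scalar quadratic is nonnegative for all $t$ iff $\gamma\nu \ge \ell^2$ (the degenerate case $\gamma = 0$ forces $\ell = 0$, $\nu \ge 0$ and is handled separately). A short expansion yields the identity $\gamma\nu - \ell^2 = B\nu - AC(\lambda_q-\lambda_p)^2$, so it remains to prove $B\nu \ge AC(\lambda_q-\lambda_p)^2$. Introducing $N_k := m_k\tau_k^2 - \tau_{k-1}^2(1-\tau_k)$ and using $\tau_k(\tau_{k-1}^2 + m_k\tau_k) = N_k + \tau_{k-1}^2$ together with the definition of $\beta_k$, one finds $\lambda_q - \lambda_p = \tfrac{1-\tau_{k-1}}{\tau_{k-1}}\cdot\tfrac{N_k}{N_k+\tau_{k-1}^2}$, so the ``cost'' $AC(\lambda_q-\lambda_p)^2$ is proportional to $N_k^2$. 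Moreover, substituting $L_k = m_k L_{k-1}$ shows that the two inequalities of \eqref{eq:lemma15-cond} are equivalent to the two–sided bound $-\tfrac{\mu_f\tau_{k-1}^2}{L_k} \le N_k \le \tfrac{\mu_f\tau_k}{L_{k-1}}$. I would then finish by bounding $N_k^2$ on this interval; the convex chord through the two endpoints is tight at both ends, so it suffices to carry out the resulting estimate.

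\emph{Main obstacle.} The only real work is this last polynomial verification. Both bounds on $N_k$ are genuinely needed, since the cost scales like $N_k^2$: the upper bound (from the first inequality) controls the regime $N_k > 0$, while the lower bound (from the second inequality) controls $N_k < 0$. A useful sanity check guiding the estimates is that $B\nu \ge AC(\lambda_q-\lambda_p)^2$ holds with equality precisely when the first inequality of \eqref{eq:lemma15-cond} is tight, so any bound invoked must be exact at $N_k = \mu_f\tau_k/L_{k-1}$.
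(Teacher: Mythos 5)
Your reduction is correct as far as it goes; note for calibration that the paper never proves this lemma at all (it imports it verbatim as Lemma 15 of \cite{zhu2022new}), so your attempt has to stand on its own. I checked each step: the three centers are collinear, the orthogonal split is right, $\gamma\ge 0$ is exactly the first condition of \eqref{eq:lemma15-cond}, the identity $\gamma\nu-\ell^2 = B\nu - AC(\lambda_q-\lambda_p)^2$ is an exact algebraic identity, and the two conditions are indeed equivalent to $-\mu_f\tau_{k-1}^2/L_k \le N_k \le \mu_f\tau_k/L_{k-1}$. The genuine gap is the final step, and it is not just an omitted computation: the plan you sketch would fail. Interval membership of $N_k$ with the coefficients held fixed is \emph{not} enough information, because $N_k$ is coupled to those coefficients through $m_k\tau_k^2 = N_k + \tau_{k-1}^2(1-\tau_k)$. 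Concretely, if you replace every occurrence of $N_k$ in $B\nu - AC(\lambda_q-\lambda_p)^2$ by a free variable $N$ ranging over the interval while keeping $L_k$ (hence $A,B,C$) fixed, the inequality is \emph{false} at the left endpoint: there it equals a nonpositive quantity that vanishes only when the second condition is tight. For instance, with $L_{k-1}=1$, $L_k=\tfrac12$, $\tau_k=\tau_{k-1}=\tfrac12$, $\mu_f=\tfrac1{10}$ both conditions hold, the true value is $N_k=0$ (where the claim is fine), yet plugging $N=N_-=-\tfrac1{20}$ everywhere gives $B\nu - AC(\lambda_q-\lambda_p)^2 = -0.0015625<0$. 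Two symptoms of the same defect: after the chord bound the residual $N_k$-dependence enters through $(N_k+\tau_{k-1}^2)^2$ with a positive (convex) coefficient, so tightness at the endpoints proves nothing in between; and your ``sanity check'' is wrong, since equality also holds when the \emph{second} condition is tight, which is exactly the regime a chord argument over the interval cannot survive.

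The repair stays inside your framework but must use the coupling. Eliminate $L_k$ via $L_k\tau_k^2 = L_{k-1}\bigl(N_k+\tau_{k-1}^2(1-\tau_k)\bigr)$ and clear the (positive) denominators; a direct expansion then produces the exact factorization
\begin{equation*}
B\nu - AC(\lambda_q-\lambda_p)^2 \;=\; \frac{(1-\tau_{k-1})^2(1-\tau_k)}{(N_k+\tau_{k-1}^2)^2}\,
\bigl(\mu_f\tau_k - L_{k-1}N_k\bigr)\,
\bigl(L_{k-1}N_k^2 + L_{k-1}\tau_{k-1}^2(1-\tau_k)N_k + \mu_f\tau_k^2\tau_{k-1}^2\bigr),
\end{equation*}
and, by the same manipulations you used to derive the interval bounds, the first parenthesized factor is nonnegative if and only if the first condition of \eqref{eq:lemma15-cond} holds, while the second factor is nonnegative if and only if the second condition holds. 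Thus the discriminant quantity is a positive multiple of the product of the two assumption slacks, and no interval or chord estimate is needed at all. The degenerate case $\gamma=0$ also closes: the identity then forces $\ell=0$, and since tightness of the first condition means $N_k=\mu_f\tau_k/L_{k-1}\ge 0$, one gets $\lambda_p\le\lambda_q$ and $\nu = A\lambda_p(\lambda_q-\lambda_p)\ge 0$. With this replacement your argument becomes a complete, self-contained proof, which is more than the paper itself offers for this statement.
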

Now, we are ready to mention the proof of Theorem \ref{thm:non-ergo-strong-semi}. 	
\paragraph{Proof of Theorem \ref{thm:non-ergo-strong-semi}.}
First of all, the updates in \eqref{eq:non-ergo-stepsize-strong} leads to the following equations for all $k\geq 2$
\begin{equation*}\label{eq:relations-last-iter}
	\tau_k^2 = (1-\tau_k)\tau_{k-1}^2, \quad \tfrac{1}{k+1}\leq \tau_{k}\leq\tfrac{2}{k+1}.
\end{equation*}
Thus, $\rho_{k-1} = \tfrac{\rho_{1}}{\tau_{k-1}^2} = \tfrac{(1-\tau_k)\rho_1}{\tau_k^2} = (1-\tau_k)\rho_k$ which implies 
\begin{equation*}\label{eq:relations-last-iter-2}
	\tfrac{1}{\eta_k}= \tfrac{1-\tau_k}{\eta_{k-1}}, \quad \forall k\geq 2.
\end{equation*}
Also, note $\eta_k<2\rho_k $ and $L_{k}$ satisfies the conditions in Lemma \ref{lem:F-contraction} and Lemma \ref{lemma:importantlemma} implying all the conditions in these lemmas hold. In addition, since $0< \rho_1=\tfrac{\mu_f}{2(M_g + M_\chi)^2}$, and $\{\beta_{k}\}_{k\geq2}$ in \eqref{eq:non-ergo-stepsize-strong} is defined as Lemma \ref{lemma:Dinh's strong}, then all conditions in Lemma \ref{lemma:Dinh's strong} including \eqref{eq:lemma15-cond} hold for all $k\geq 2$.
Hence using Lemma \ref{lemma:Dinh's strong}, we have the following useful relation for any $x\in X$. 
\begin{equation}\label{eq:resultlemma15}
	\begin{aligned}
		&(L_k-\mu_f)\tau_k^2\gnorm{\tfrac{1}{\tau_k}[\wh{x}_k-(1-\tau_k)x_k] -x}{}{2}-\mu_f(1-\tau_k)\tau_k\gnorm{x_k-x}{}{2}\\
		\leq &(1-\tau_k)L_{k-1}\tau_{k-1}^2\gnorm{\tfrac{1}{\tau_{k-1}}[x_{k} - (1-\tau_{k-1})x_{k-1}] - x}{}{2}, \quad \forall k\geq 2,
	\end{aligned}
\end{equation}
Now, substituting \eqref{eq:resultlemma15} into Lemma \ref{lemma:importantlemma}, one can obtain the following result for all $k\geq 2$
\begin{equation}\label{eq:recur1}
	\begin{aligned}
		\Lcal(x_{k+1}, s_{k+1}, y) -& \Lcal(x,s,\wb{y}_{k+1})  
		\le(1-\tau_k)[\Lcal(x_k, s_k, y) - \Lcal(x,s,\wb{y}_k)]\\
		&+\tfrac{(1-\tau_k)L_{k-1}\tau_{k-1}^2}{2}\gnorm{\tfrac{1}{\tau_{k-1}}[x_{k} - (1-\tau_{k-1})x_{k-1}] - x}{}{2}\\
		&\quad - \tfrac{L_k\tau_k^2}{2}\gnorm{\tfrac{1}{\tau_k}[x_{k+1} - (1-\tau_k)x_k] - x}{}{2}   + \tfrac{1-\tau_k}{2\eta_{k-1}}\gnorm{y-\wt{y}_k}{}{2} -\tfrac{1}{2\eta_k} \gnorm{y-\wt{y}_{k+1}}{}{2}\\
		&\quad  + \tfrac{(L_gD_x[\gnorm{y}{}{}-B]_+ + H_g\gnorm{y}{}{})^2}{L_k-2(L_f + BL_g)}+ \tfrac{2(H_f^2+\gnorm{\delta_k}{}{2})}{L_k}-\tau_k\inner{\delta_k }{\dot{x}_k-x}.
	\end{aligned}
\end{equation}
By rearranging \eqref{eq:recur1} we have the following result 
\begin{equation}\label{eq:recur2}
	\begin{aligned}
		\Lcal(x_{k+1}, &s_{k+1}, y) - \Lcal(x,s,\wb{y}_{k+1})+\tfrac{L_k\tau_k^2}{2}\gnorm{\tfrac{1}{\tau_k}[x_{k+1} - (1-\tau_k)x_k] - x}{}{2}+ \tfrac{1}{2\eta_k} \gnorm{y-\wt{y}_{k+1}}{}{2}  \\
		&\le(1-\tau_k)[\Lcal(x_k, s_k, y) - \Lcal(x,s,\wb{y}_k)+\tfrac{L_{k-1}\tau_{k-1}^2}{2}\gnorm{\tfrac{1}{\tau_{k-1}}[x_{k} - (1-\tau_{k-1})x_{k-1}] - x}{}{2}\\
		&\quad + \tfrac{1}{2\eta_{k-1}}\gnorm{y-\wt{y}_k}{}{2} ] -\tau_{k}\inner{\delta_k }{\dot{x}_k-x} + \tfrac{(L_gD_x[\gnorm{y}{}{}-B]_+ + H_g\gnorm{y}{}{})^2}{L_k-2(L_f + BL_g)}+ \tfrac{2(H_f^2+\gnorm{\delta_k}{}{2})}{L_k} .
	\end{aligned}
\end{equation}
Let us consider $Q_k(x,y)$ as follows 
\[
Q_k(x,y)  = \Lcal(x_k, s_k, y) - \Lcal(x,s,\wb{y}_k)+\tfrac{L_{k-1}\tau_{k-1}^2}{2}\gnorm{\tfrac{1}{\tau_{k-1}}[x_{k} - (1-\tau_{k-1})x_{k-1}] - x}{}{2}+ \tfrac{1}{2\eta_{k-1}}\gnorm{y-\wt{y}_k}{}{2}.
\]
Hence \eqref{eq:recur2} implies the following 
\begin{equation}\label{eq:Q_recur}
	Q_{k+1}(x,y)\leq (1-\tau_k)Q_k(x,y)  +  \tau_{k}\inner{\delta_k }{x-\dot{x}_k} + \tfrac{(L_gD_x[\gnorm{y}{}{}-B]_++H_g\gnorm{y}{}{})^2}{L_k-2(L_f + BL_g)}+ \tfrac{2(H_f^2+\gnorm{\delta_k}{}{2})}{L_k}, \quad \forall k\geq 2.
\end{equation}
Moreover regarding  initialization in Algorithm \ref{alg:augConEx}, Lemma \ref{lemma:importantlemma} and the step-size policy in Theorem \ref{thm:non-ergo-strong-semi}, we have the following relations for $k=2$ 
\begin{equation}\label{eq:Q_1}
	Q_2(x,y)\leq \tfrac{L_1-\mu_f}{2}\gnorm{x_1-x}{}{2} + \tfrac{1}{2\eta_{1}}\gnorm{y_1-y}{}{2}+ \tfrac{(L_gD_x[\gnorm{y}{}{}-B]_+ + H_g\gnorm{y}{}{})^2}{L_1-2(L_f + BL_g)}+ \tfrac{2(H_f^2+\gnorm{\delta_1}{}{2})}{L_1}+\inner{\delta_1 }{x-\dot{x}_1}.
\end{equation}
Using the recursion in \eqref{eq:Q_recur} from $k=2$ to $K-1$ and considering $\tau_1 =1, \tilde{y}_1 = y_1$, and $ \hat{x}_1 = x_1$ we have 
\begin{equation}\label{eq:recur3}
	\begin{aligned}
		\Lcal(x_{K}, &s_{K}, y) - \Lcal(x,s,\wb{y}_{K}) 
		\le[\prod_{k=2}^{K-1} (1-\tau_{k})]Q_2(x,y)
		\\
		&\quad +\sum_{k=2}^{K-1}\big[ [\prod_{i=k+1}^{K-1} (1-\tau_{i})]\tau_{k} \inner{\delta_k }{x-\dot{x}_k} + \tfrac{(L_gD_x[\gnorm{y}{}{}-B]_++H_g\gnorm{y}{}{})^2}{L_k-2(L_f + BL_g)}+ \tfrac{2(H_f^2+\gnorm{\delta_1}{}{2})}{L_k}\big]\\
		&\leq \tfrac{4}{K^2}[(L_f+BL_g)\gnorm{x_1-x}{}{2}+ \tfrac{1}{2\eta_{1}}\gnorm{y_1-y}{}{2} + \tfrac{(L_gD_x[\gnorm{y}{}{}-B]_++H_g\gnorm{y}{}{})^2}{2\rho_{1}(M_g+M_\chi)^2}\\
		&\quad+\tfrac{H_f^2+\gnorm{\delta_1}{}{2}}{(L_f+BL_g+\rho_{1}(M_g + M_\chi^2))} +\inner{\delta_1}{x-\dot{x}_1}] +\inner{\delta_{K-1}}{x-\hat{x}_{K-1}} + \tfrac{(L_gD_x  [\gnorm{y}{}{} - B]_++H_g\gnorm{y}{}{})^2}{L_{K-1}-2(L_f+ BL_g )} + \tfrac{2(H_f^2+ \gnorm{\delta_{K-1}}{}{2})}{L_{K-1}}\\
		&\quad	 +\sum_{k=2}^{K-2}\big[ [\prod_{i=k+1}^{K-1} (1-\tau_{i})] \tau_{k}\inner{\delta_k }{x-\hat{x}_k} + \tfrac{(L_gD_x[\gnorm{y}{}{}-B]_++H_g\gnorm{y}{}{})^2}{L_k-2(L_f + BL_g)}+ \tfrac{2(H_f^2+\gnorm{\delta_k}{}{2})}{L_k}\big],
	\end{aligned}
\end{equation}
where we used \eqref{eq:Q_1}, $2\rho_{1}(M_g+M_{\chi})^2= \mu_f$  and $[\prod_{k=2}^{K-1} (1-\tau_{k})] = \tfrac{\tau_{K-1}^2}{\tau_1}\leq \tfrac{4}{K^2}$ to obtain the second inequality. 	
We know that for any $y\geq \textbf{0}$, $ \Lcal(x_K,y)\leq \Lcal(x_K,s_K,y)$ and for $s =0 $, we have $\Lcal(x,\bar{y}_K) = \Lcal(x,0,\bar{y}_K)$ leading to the following relation 
\begin{equation}\label{eq:relation3}
	\Lcal(x_K,y) - \Lcal(x,\bar{y}_K) \leq \Lcal(x_K,s_K,y) - \Lcal(x,0,\bar{y}_K).
\end{equation}
Using \eqref{eq:relation3}, with \eqref{eq:recur3} alongside the definition of $  \Lcal(x,y)$ we have 
\begin{equation}\label{eq:non-ergo-8}
	\begin{aligned}
		&\psi_0(x_K)+ \langle y , \psi(x_K)\rangle - \psi_0(x) - \langle \bar{y}_K, \psi(x)\rangle \leq  \tfrac{4}{K^2}[(L_f+BL_g)\gnorm{x_1-x}{}{2}\\
		&\quad+ \tfrac{1}{2\eta_{1}}\gnorm{y_1-y}{}{2}+\tfrac{(L_gD_x[\gnorm{y}{}{}-B]_++H_g\gnorm{y}{}{})^2 +2(H_f^2+\gnorm{\delta_1}{}{2})}{2\rho_{1}(M_g+M_\chi)^2}+\inner{\delta_1}{x-\dot{x}_1}]
		\\&\quad +\sum_{k=2}^{K-1}\big[ [\prod_{i=k+1}^{K-1} (1-\tau_{i})] \tau_{k}\inner{\delta_k }{x-\dot{x}_k} + \tfrac{(L_gD_x[\gnorm{y}{}{}-B]_++H_g\gnorm{y}{}{})^2}{L_k-2(L_f + BL_g)}+ \tfrac{2(H_f^2+\gnorm{\delta_k}{}{2})}{L_k}\big].
	\end{aligned}
\end{equation}
Now let us choose $ y =0 $. we know $ \bar{y}_K\geq 0$ and $\psi(x^*)\leq 0$, thus $ \langle \bar{y}_K, \psi(x^\ast)\rangle \leq 0 $. Rewriting \eqref{eq:non-ergo-8} for $(x,y)=(x^*,0)$ and taking the expectation from both sides with respect to $\Ebb[\inner{\delta_k}{\dot{x}_k-x^*}] = 0 $, one can conclude the following 
\begin{equation}\label{eq:feasib}
	\begin{aligned}
		\Ebb[\psi_0(x_K) - \psi(x^*)]\leq& \tfrac{4}{K^2}[4(L_f+BL_g)D_X^2
		+ \tfrac{1}{2\eta_{1}}\gnorm{y_1}{}{2} + \tfrac{(H_f^2+\sigma^2)}{\rho_{1}(M_g+M_\chi)^2} ]\\
		&\quad	+\sum_{k=2}^{K-2}\big[ [\prod_{i=k+1}^{K-1} (1-\tau_{i})]  \tfrac{2(H_f^2+\sigma^2)}{L_k}\big] + \tfrac{H_f^2+ \sigma^2}{\rho_{K-1}(M_g+M_\chi)^2}.
	\end{aligned}
\end{equation}
Now let us focus on the following summation 
\begin{equation}\label{eq:first-sum}
	\begin{aligned}
		\sum_{k=2}^{K-2} [\prod_{i=k+1}^{K-1} (1-\tau_{i})]\tfrac{2(H_f^2+\sigma^2)}{L_k}
		&\leq \sum_{k=2}^{K-2} \tfrac{\tau_{K-1}^2}{\tau_{k}^2}\tfrac{H_f^2+\sigma^2}{\rho_k (M_g+M_\chi)^2} \leq \tfrac{4(H_f^2+\sigma^2)}{K^2}\sum_{k=2}^{K-2} \tfrac{1}{\rho_1 (M_g+M_\chi)^2} \leq\tfrac{8(H_f^2+\sigma^2)}{\mu_fK},
	\end{aligned}
\end{equation}
where we used $1-\tau_k = \tfrac{\tau_k^2}{\tau_{k-1}^2}$, $\tfrac{1}{k+1}\leq \tau_k\leq\tfrac{2}{k+1}$, $\tau_k^2\rho_k = \rho_1$ and $2\rho_{1}(M_g+M_\chi)^2=\mu_f$ in the last two inequalities. Moreover, note that $\tfrac{H_f^2+ \sigma^2}{\rho_{K-1}(M_g+M_\chi)^2} \leq \tfrac{8(H_f^2+\sigma^2)}{\mu_fK^2}$.Therefore, using \eqref{eq:feasib} and \eqref{eq:first-sum}, we concludes \eqref{eq:opt-1-non-ergo}.
Now let us focus on the feasibility convergence rate. First, we rewrite \eqref{eq:non-ergo-8} for any $ x=x^*$ and $\hat{y}$, where $\hat{y} = (\gnorm{y}{}{}+1)\tfrac{[\psi(\bar{x}_K)]_+}{\gnorm{[\psi(\bar{x}_K)]_+}{}{}}$. We have 
\begin{equation}\label{eq:non-ergo-10}
	\begin{aligned}
		\psi&_0(x_K)+ \langle \hat{y} , \psi(x_K)\rangle - \psi_0(x^*) - \langle \bar{y}_K, \psi(x^*)\rangle \leq  \tfrac{4}{K^2}[(L_f+BL_g) \gnorm{x_1-x^*}{}{2}\\
		&+ \tfrac{1}{2\eta_{1}}\gnorm{y_1-\hat{y}}{}{2}+\tfrac{\mathcal{H}_*^2 +2(H_f^2+\gnorm{\delta_1}{}{2})}{2\rho_{1}(M_g+M_\chi)^2} - \inner{\delta_1}{\dot{x}_1-x^*}]+ \big[\inner{\delta_{K-1}}{x^*-\dot{x}_{K-1}}+\tfrac{\mathcal{H}_*^2}{L_{K-1}-2(L_f+ BL_g )}  + \tfrac{2(H_f^2+ \gnorm{\delta_k}{}{2})}{L_{K-1}}\big]\\
		&\quad
		+\sum_{k=1}^{K-2}\big[ [\prod_{i=k+1}^{K-1} (1-\tau_{i})] \inner{\delta_k }{x^*-\hat{x}_k} + \tfrac{\mathcal{H}_*^2}{L_k-2(L_f + BL_g)}+ \tfrac{2(H_f^2+\gnorm{\delta_k}{}{2})}{L_k}\big].\\
	\end{aligned}
\end{equation}
Using the similar argument in Theorem \ref{thm:convex-case}, one can conclude the following
\begin{equation}\label{eq:non-ergo-feas-10}
	\begin{aligned}
		\gnorm{[ \psi({x}_K)]_+}{}{}& \leq  \tfrac{4}{K^2}[(L_f+BL_g)\gnorm{x_1-x^*}{}{2}+ \tfrac{1}{2\eta_{1}}\gnorm{y_1-\hat{y}}{}{2} 
		+ \tfrac{\mathcal{H}_*^2 + 2(H_f^2+\|\delta_k\|^2)}{2\rho_{1}(M_g+M_\chi)^2}+ \inner{\delta_1}{x-\dot{x}_1}]\\
		&\quad+\inner{\delta_{K-1}}{x-\hat{x}_{K-1}}+\big[\tfrac{\mathcal{H}_*^2}{L_{K-1}-2(L_f+ BL_g )} + \tfrac{2(H_f^2+ \gnorm{\delta_{K-1}}{}{2})}{L_{K-1}}\big]\\
		&\quad+\sum_{k=1}^{K-2}\big[ [\prod_{i=k+1}^{K-1} (1-\tau_{i})] \inner{\delta_k }{x-\hat{x}_k} + \tfrac{\mathcal{H}_*^2}{L_k-2(L_f + BL_g)}+ \tfrac{2(H_f^2+\gnorm{\delta_k}{}{2})}{L_k}\big].
	\end{aligned}
\end{equation} 
Taking the expectation from both sides of \eqref{eq:non-ergo-feas-10} with respect to $\Ebb[\inner{\delta_k}{\dot{x}_k-x^*}] = 0 $, using similar relations we had in \eqref{eq:first-sum}, and $2\rho_{1}(M_g+M_\chi)^2= \mu_f$ one concludes \eqref{eq:feasib-1-non-ergo}.

Lastly, let us move to the last iterate convergence. One can use \eqref{eq:Q_recur} construct the following inequality for $(x,s,y) = (x^*,s^*,y^*)$. Note that we know $\Lcal(x_{K}, s_{K}, y^*) - \Lcal(x^*,s^*,\wb{y}_{K})\geq 0$. Therefore we can ignore the part of \eqref{eq:Q_recur} related to the gap function and say 
\begin{equation}\label{eq:recur4}
	\begin{aligned}
	\tfrac{L_{K-1}\tau_{K-1}^2}{2}&\gnorm{\tfrac{1}{\tau_{K-1}} [x_{K} - (1-\tau_{K-1})x_{K-1}] - x^*}{}{2}
		\le[\prod_{k=2}^{K-1} (1-\tau_{k})]Q_2(x^*,y^*)
		\\
		&\quad +\sum_{k=2}^{K-1}\big[ [\prod_{i=k+1}^{K-1} (1-\tau_{i})]\tau_{k} \inner{\delta_k }{x^*-\dot{x}_k} + \tfrac{(L_gD_x[\gnorm{y^*}{}{}-B]_++H_g\gnorm{y^*}{}{})^2}{L_k-2(L_f + BL_g)}+ \tfrac{2(H_f^2+\gnorm{\delta_k}{}{2})}{L_k}\big]\\
		&\leq \tfrac{4}{K^2}[(L_f+BL_g)\gnorm{x_1-x^*}{}{2}+ \tfrac{1}{2\eta_{1}}\gnorm{y_1-y^*}{}{2}+ \tfrac{(L_gD_x[\gnorm{y^*}{}{}-B]_++H_g\gnorm{y^*}{}{})^2}{2\rho_{1}(M_g+M_\chi)^2}\\
		&\quad+\tfrac{H_f^2+\gnorm{\delta_1}{}{2}}{(L_f+BL_g+\rho_{1}(M_g + M_\chi^2))} +\inner{\delta_1}{x^*-\dot{x}_1}] +\inner{\delta_{K-1}}{x^*-\hat{x}_{K-1}} + \tfrac{(L_gD_x  [\gnorm{y^*}{}{} - B]_++H_g\gnorm{y^*}{}{})^2}{L_{K-1}-2(L_f+ BL_g )}\\
		&\quad	 + \tfrac{2(H_f^2+ \|\delta_{K-1}\|^2)}{L_{K-1}} +\sum_{k=2}^{K-2}\big[ [\prod_{i=k+1}^{K-1} (1-\tau_{i})] \tau_{k}\inner{\delta_k }{x^*-\hat{x}_k} + \tfrac{(L_gD_x[\gnorm{y^*}{}{}-B]_++H_g\gnorm{y^*}{}{})^2}{L_k-2(L_f + BL_g)}+ \tfrac{2(H_f^2+\gnorm{\delta_k}{}{2})}{L_k}\big].
	\end{aligned}
\end{equation}
We obtain \eqref{eq:last-iter} similar to the process of deriving \eqref{eq:feasib-1-non-ergo} so we avoid repetition. 
$ \openbox $



\section{Numerical Experiments}\label{sec:numerical}
This section is dedicated to numerically validating the performance of the $ \augconex $ method. The experiments are coded in MATLAB R2024b, running on a  64-bit PC with 2.1 GHz Intel Core i7-12700 and 32Gb RAM. We implement the \augconex~method on a sparse quadratically constrained quadratic program (QCQP). In particular, we have
\begin{equation}\label{eq:sparse}
    \min_{\gnorm{x}{}{}\leq D_x}\{\psi_0: = \tfrac{1}{2}x^\top A_0 x + b_0^\top x + \lambda\gnorm{x}{1}{}\text{ s.t. } \tfrac{1}{2}x^\top A_i x + b_i^\top x - c_i\leq 0, \text{ } i\in [m]\},
\end{equation}
where $A_i\in \mathbb{R}^{n\times n}$, $i = 0,1,\dots,m$ are randomly generated positive semi-definite matrices, and $b_i \in \mathbb{R}^{n}, i = 0,1,\dots, m$ are random vectors. Moreover, $c_i, i\in [m]$ are random numbers drawn from a uniform distribution on the interval $[0,2]$. This choice of $c$ makes $\zero$ a strictly feasible solution, which also ensures the existence of a dual solution. Note that, in this setting, the sparse QCQP problem \eqref{eq:sparse} is a subclass of the composite nonsmooth convex function-constrained problem \eqref{eq:prima-problem}. We compare the performance of the last iterate of \augconex~method with the average iterate of the ConEx method \cite{boob2023stochastic} for solving \eqref{eq:sparse}. The ConEx method also applies to composite nonsmooth and stochastic problems and provides convergence guarantee on the average-iterate \cite[Theorem 1 and 2]{boob2023stochastic}. In this experiment, the sparsity level is determined by the penalty parameter $\lambda$. It is well-known that as we increase $\lambda$, we expect that the optimal solution of \eqref{eq:sparse} will get more sparse.

For these experiments, we set $n=100$ and $m=10$. Furthermore, $D_x =10$, $L_f = \gnorm{A_0}{}{}$, $L_{g_i} = \gnorm{A_i}{}{}$, $M_{g_i} = D_x\gnorm{A_i}{}{} + \gnorm{b_i}{}{}$, and $M_{\chi_i} = 0$ for $i\in [m]$. Also, note since we have quadratic constraints, $H_g = 0$. To verify the performance of both algorithms in the stochastic setting, we artificially make the gradients noisy as follows: Suppose $v \in \Rbb^n$ is the true gradient of $f$ and $\xi \sim N(\zero, I_n)$ be the random vector of $n$ independent and identically distributed (i.i.d) standard normal random variables. Then, we use the noisy gradient $\tilde{v} = v + \sigma \xi$ where $\sigma$ is the standard deviation of the noise. It is clear that $\tilde{v}$ is an unbiased estimator of $v$, satisfying the requirements of our setting (see  \eqref{eq:SO_F_objective}).  We use $\sigma = 10$ throughout the experiments reported below. 

We use both Aug-ConEx and ConEx methods to solve \eqref{eq:sparse} in two possible ways. First, we treat $\ell_1$-norm regularization $\lambda \gnorm{x}{1}{}$ as a separate function $\chi_0$, leading to having a proximal operator in Algorithm \ref{alg:implicit}. In the second variant,  we consider $\ell_1$-norm regularization as a part of $f$ resulting in the linearization of $\lambda \gnorm{x}{1}{}$ as a part of $f$ first-order approximation in \eqref{eq:conceptual-x-s-update}. In this variant, we will have a simple projection onto an $\ell_2$-ball with radius $D_x$ in Algorithm \ref{alg:implicit}. Note that for the first and second variants, we have $H_f=0$, $H_f = 2\lambda\sqrt{n}$ respectively. In each of those settings, we compare the performance of $\augconex$ and ConEx in terms of feasibility and optimality gaps for convex and strongly convex cases. 

 Figure \ref{fig:prox}, in particular, Figure \ref{fig:subfig_prox1} and Figure \ref{fig:subfig_prox2}, compare $\augconex$ and ConEx methods in optimality and feasibility gap metrics against iteration and run time for 10 i.i.d instances for both convex and strongly convex problems, respectively. In particular, we report the average optimality and feasibility gaps across the 10 random instances. Moreover, for this experiment, we run both methods in the first variant where we treat $\ell_1$-norm as a prox-friendly function over an $\ell_2$-ball. We did not see any works on the evaluation of such prox operators in the literature. Hence, we show that it can be computed explicitly in Proposition \ref{prop:sparse}.
\begin{figure}[h]
    \centering
    \begin{subfigure}{.45\textwidth}
        \centering
        \includegraphics[width=\linewidth]{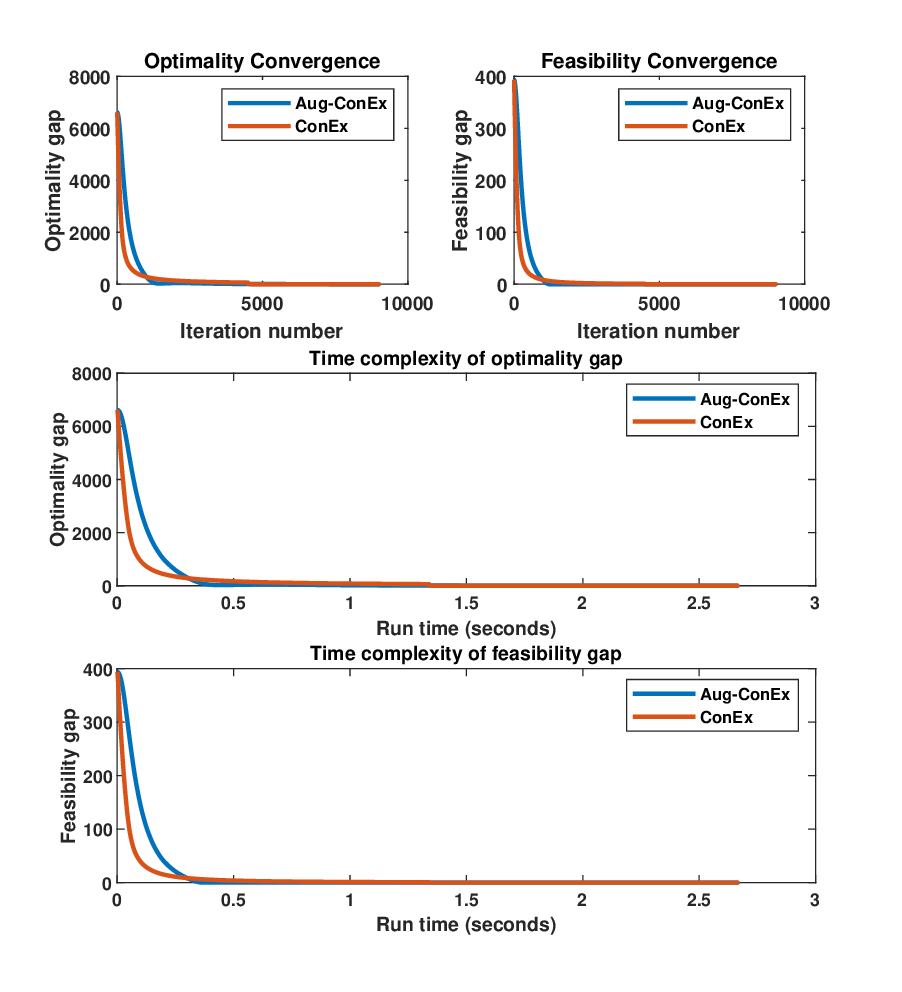}  
        \caption{Convex setting for 9000 iterations}
        \label{fig:subfig_prox1}
    \end{subfigure}
    \hfill
    \begin{subfigure}{0.45\textwidth}
        \centering
        \includegraphics[width=\linewidth]{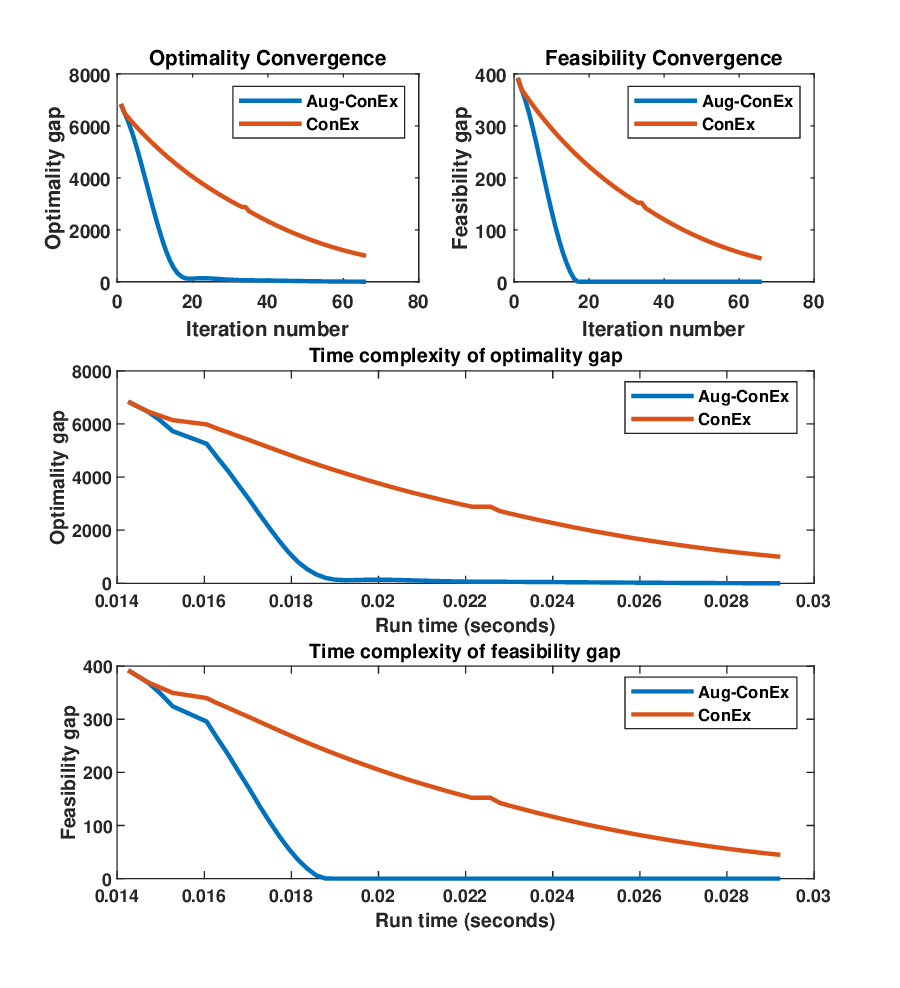}  
        \caption{Strongly convex setting for 66 iterations}
        \label{fig:subfig_prox2}
    \end{subfigure}
    
    \caption{Performance of $\augconex$ vs ConEx in terms of optimality and feasibility gaps on 10 i.i.d instances of sparse QCQP using the proximal operator in Algorithm \ref{alg:implicit}.}
    \label{fig:prox}
\end{figure}
As one can observe from Figure \ref{fig:subfig_prox1}, ConEx method converges slightly faster than the $\augconex$ method in the initial phase. However, the optimality and feasibility gap achieved by \augconex~is slightly better than the ConEx method. The situation changes substantially in the strongly convex setting. Indeed, in Figure \ref{fig:subfig_prox2}, we see that $\augconex$ has much faster convergence than the ConEx method in both optimality and feasibility metrics against iteration and run times.  Now, let us compare these two methods in the second variant of the experiments where we have a simple projection in computing the implicit step of $\augconex$ in Algorithm \ref{alg:implicit}. As we can see from Figure \ref{fig:proj}, the results are similar to the first variant in terms of convergence rate in convex and strongly convex settings. 
\begin{figure}[h]
    \centering
    \begin{subfigure}{.45\textwidth}
        \centering
        \includegraphics[width=\linewidth]{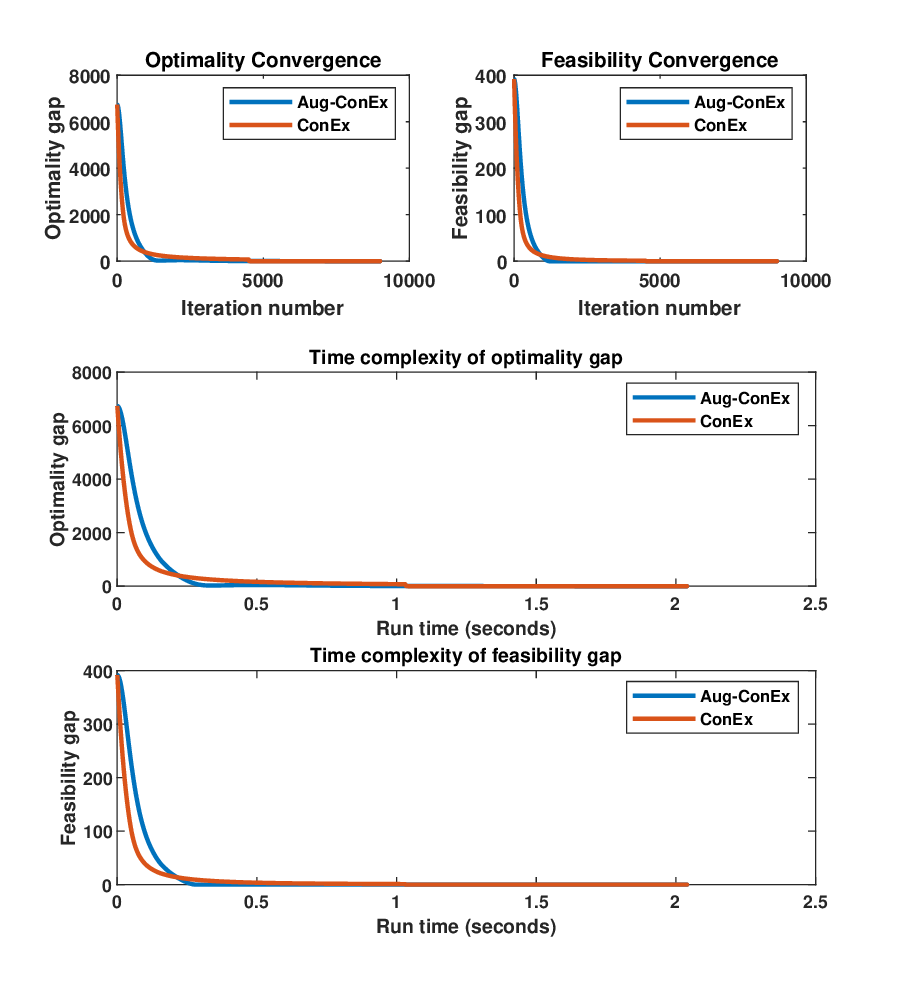}  
        \caption{Convex setting for 9000 iterations}
        \label{fig:subfig1}
    \end{subfigure}
    \hfill
    \begin{subfigure}{0.45\textwidth}
        \centering
        \includegraphics[width=\linewidth]{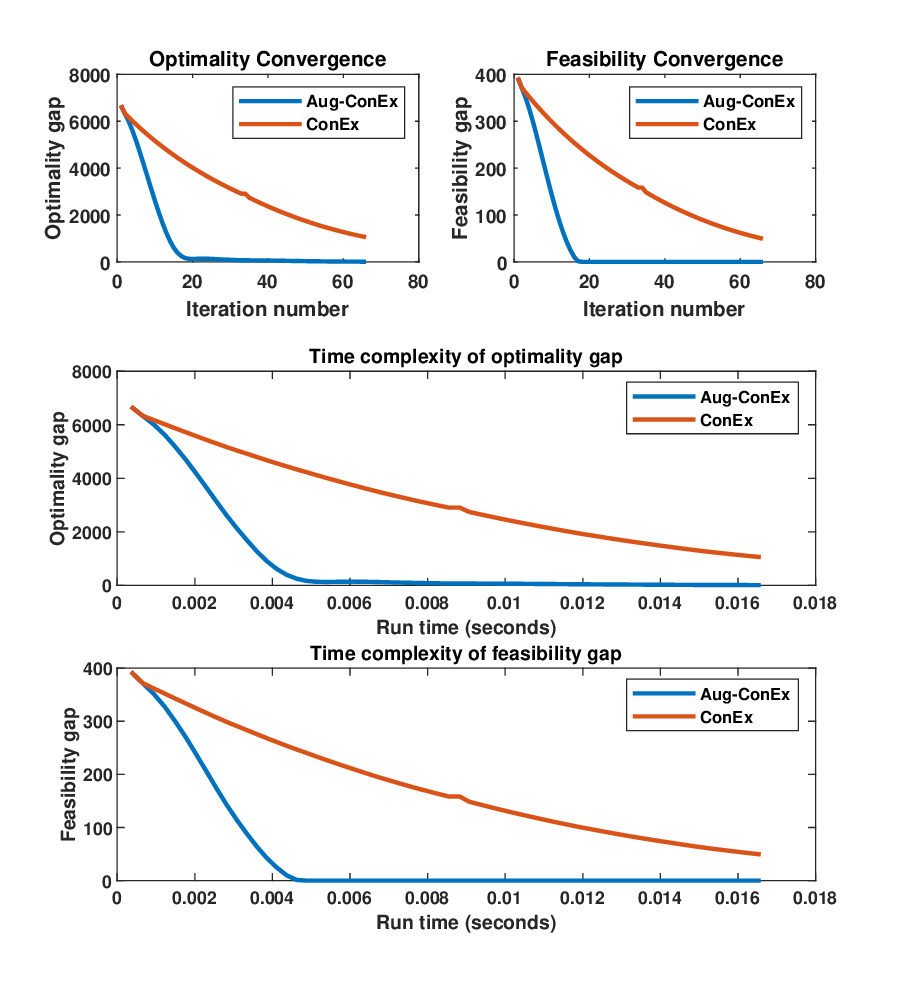}  
        \caption{Strongly convex setting for 66 iterations}
        \label{fig:subfig2}
    \end{subfigure}
    
    \caption{Performance of $\augconex$ vs ConEx in terms of optimality and feasibility gaps on 10 i.i.d instances of sparse QCQP using projection in Algorithm \ref{alg:implicit}.}
    \label{fig:proj}
\end{figure}
We should mention that although $\augconex$ is a two-loop algorithm, the corresponding operator for computing implicit step in \eqref{eq:conceptual-x-s-update} converges to its fixed point in a few numbers of iterations. Specifically, the inner loop (Algorithm \ref{alg:implicit}
) reaches a fixed point in no more than four iterations. Figure \ref{fig:proj-iter} gives a histogram of the number of proximal operations in the inner loop in both variants of experiments in the convex setting. 
\begin{figure}[H]
    \centering
    \begin{subfigure}{.45\textwidth}
        \centering
        \includegraphics[width=.9\linewidth]{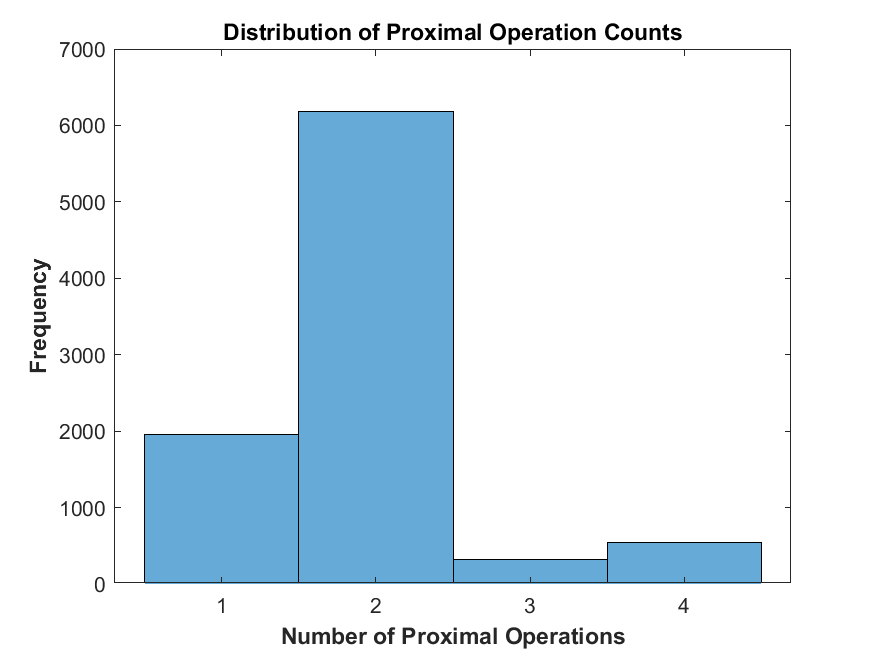}  
        \caption{First variant (proximal operator in Algorithm \ref{alg:implicit})}
        \label{fig:subfig1-hist}
    \end{subfigure}
    \hfill
    \begin{subfigure}{.45\textwidth}
        \centering
        \includegraphics[width=.9\linewidth]{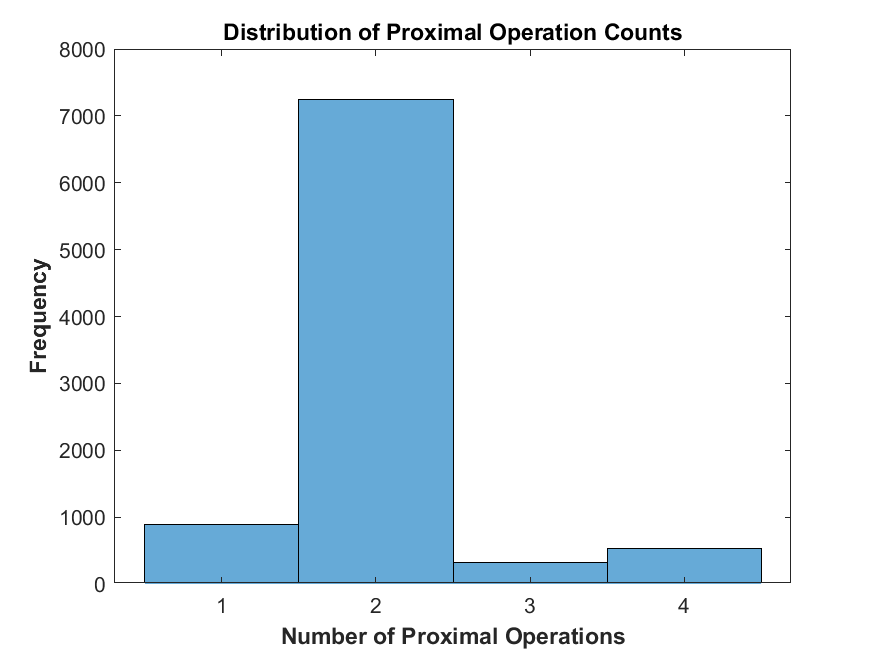}  
        \caption{Second variant (projection in Algorithm \ref{alg:implicit})}
        \label{fig:subfig2-hist}
    \end{subfigure}
    
    \caption{Number of iterations in Algorithm \ref{alg:implicit} with $K=9000$ to converge to the fixed points of \eqref{eq:conceptual-x-s-update} in Convex setting.}
    \label{fig:proj-iter}
\end{figure}

Another interesting measure is the sparsity level of solutions. We compare $\augconex$ and ConEx algorithms on different sparse QCQPs by varying the parameter $\lambda$. The sparsity level is determined by the number of zero elements of solutions produced by each method. From the practical view, we define sparsity level as the number of elements with absolute value less than $ 10^{-10}$. Table \ref{tab:prox} depicts the superiority of the $\augconex$ method since it can handle the sparsity conditions more effectively as the penalty parameter increases.
On the other hand, ConEx fails to control the sparsity given that it returns an averaging solution. It is worth mentioning that although the criterion for counting an element as zero is to have its absolute value less than $10^{-10}$, all the aspects satisfying this condition have exactly zero value. This means that $\augconex$ can return solutions much smaller than our initial criterion for sparsity level. In the second variant of experiments, both algorithms give poor solutions. This failure is somehow predictable since we use a linear approximation of the penalty function in the second variant instead of its exact form in the first variant. 
\begin{table}[H]
  \caption{Sparsity level of the solutions (i.e., number of elements with absolute value $< 10^{-10}$) produced by $\augconex$ and ConEx algorithms for sparse QCQP with $n=100$ in the first variant of the experiments. }
\begin{adjustbox}{width=1\textwidth}
\begin{tabular}{|c|c|c|c|c|c|c|}
\hline
\diagbox{Algorithm}{Penalty parameter} & $\lambda = 20$ & $\lambda = 22$ & $\lambda = 24$ & $\lambda = 26$ & $\lambda = 28$& $\lambda = 30$\\
\hline
& Convex | Strongly convex & Convex | Strongly convex  & Convex | Strongly convex  & Convex | Strongly convex  & Convex | Strongly convex & Convex | Strongly convex  \\
\hline
 $\augconex$& 1 | 21 & 29 | 38 & 75 | 88 & 97 | 97& 100 | 99 &100 | 100\\
\hline
ConEx & 0 | 0 & 0 | 0 & 0 | 0 & 0 | 0 & 0 | 0 &0 | 0\\
\hline
\end{tabular}
\end{adjustbox}
    \label{tab:prox}
\end{table}

Overall, numerical experiments on sparse QCQPs corroborate our findings in terms of convergence rate and show $\augconex$ addresses the issues we have in iterate-averaging methods such as ConEx in \citep{boob2023stochastic} concerning other measures such as sparsity level. 

\section{Conclusion}\label{sec:conclusion}

\appendix
\section{Analysis of Algorithm \ref{alg:x-s-update} }\label{sec:implicit}
This appendix provides the technical results used to obtain Lemma \ref{lem:F-contraction}. First, we need the following proposition. 
\begin{proposition}\label{prop:a-b-relation}
	For vectors $a, b$, we have $\gnorm{[a]_+ - [b]_+}{}{} \le \gnorm{a-b}{}{}$
\end{proposition}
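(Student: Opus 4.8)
The plan is to reduce the claim to a one-dimensional statement applied coordinatewise. Recall that $[\cdot]_+$ acts elementwise, so writing $a = (a^1, \dots, a^k)$ and $b = (b^1, \dots, b^k)$, it suffices to prove the scalar inequality $|[a^i]_+ - [b^i]_+| \le |a^i - b^i|$ for each coordinate $i$, and then sum the squares: $\gnorm{[a]_+ - [b]_+}{}{2} = \sum_{i} ([a^i]_+ - [b^i]_+)^2 \le \sum_i (a^i - b^i)^2 = \gnorm{a-b}{}{2}$, after which taking square roots gives the result.

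For the scalar inequality, the cleanest route I would take is to use the identity $[t]_+ = \tfrac{t + |t|}{2}$. Then $[s]_+ - [t]_+ = \tfrac{1}{2}(s-t) + \tfrac{1}{2}(|s| - |t|)$, and applying the triangle inequality together with the reverse triangle inequality $\big||s|-|t|\big| \le |s-t|$ yields $|[s]_+ - [t]_+| \le \tfrac{1}{2}|s-t| + \tfrac{1}{2}|s-t| = |s-t|$. Alternatively, a short four-case analysis on the signs of $s$ and $t$ (both nonnegative, both nonpositive, and the two mixed cases) establishes the same bound directly.

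There is no genuine obstacle here; the statement is precisely the fact that the map $t \mapsto \max\{t,0\}$ is $1$-Lipschitz, equivalently that the Euclidean projection onto the nonnegative orthant $\mathbb{R}^k_+$ — which is exactly the map $a \mapsto [a]_+$ — is non-expansive. If one prefers, the proposition may instead be cited directly as the standard non-expansiveness of projection onto a closed convex set, specialized to $\mathbb{R}^k_+$.
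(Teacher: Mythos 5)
Your proof is correct, and its skeleton---reduce to the scalar inequality $|[s]_+ - [t]_+| \le |s-t|$ and apply it coordinatewise, summing squares---is the same as the paper's. Where you genuinely diverge is in how the scalar inequality is established: the paper runs a three-case sign analysis (both arguments nonnegative, both negative, mixed signs), which is essentially the ``alternative'' you mention only in passing, whereas your primary argument uses the identity $[t]_+ = \tfrac{t+|t|}{2}$ together with the reverse triangle inequality $\bigl||s|-|t|\bigr| \le |s-t|$, eliminating the case split altogether. Your closing observation is also a genuinely different (and more conceptual) route: the map $a \mapsto [a]_+$ is exactly the Euclidean projection onto $\mathbb{R}^k_+$, so the proposition is an instance of non-expansiveness of projection onto a closed convex set. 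The trade-offs are what you would expect: the paper's case analysis is fully self-contained and uses nothing beyond the definition of $[\cdot]_+$; your identity-based argument is shorter and avoids bookkeeping; and the projection viewpoint is the most general, explaining \emph{why} the inequality holds and allowing the lemma to be cited from standard convex analysis rather than proved. All three arguments are complete, so there is no gap to repair.
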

\begin{proof}
	It suffices to prove the following inequality for any two scalars $s_1, s_2$, we have  \[|[s_1]_+-[s_2]_+| \le |s_1-s_2|.\] Indeed, if the above relation holds, applying this elementwise to vectors $a, b$ yields the desired result.
	We divide it into three cases:\\
	{\bf Case 1:} $s_1 \ge 0, s_2 \ge 0$. Then, $|[s_1]_+-[s_2]_+|  = |s_1-s_2|$.\\
	{\bf Case 2:} $s_1 < 0, s_2 < 0$. Then, $|[s_1]_+-[s_2]_+| = 0 \le |s_1-s_2|$.\\
	{\bf Case 3:} Either $s_1 < 0$ or $s_2 < 0$. Without loss of generality, we assume $s_2 < 0$. Then, $|[s_1]_+-[s_2]_+| = s_1 \le |s_1-s_2|$.\\
	In all three cases, we showed the required relation. Hence, we conclude the proof.
\end{proof}
\subsection{Proof of Lemma \ref{lem:F-contraction}}
	Note that 
\[
s_{k+1}=[U_k + g'(\wh{x}_k)^\top x_{k+1}+\chi (x_{k+1})]_-.
\]
Consequently, from the definition of $y_{k+1}$ in \eqref{eq:Denotes}, one can say 
\[
y_{k+1} = [U_k + g'(\wh{x}_k)^\top x_{k+1}+\chi (x_{k+1})]_+.
\]
Replacing this value into $x_{k+1}$ update in Algorithm \ref{alg:augConEx} and by equivalency between \eqref{eq:prox-operaotr1} and \eqref{eq:prox-operaotr2}, we have 
\begin{equation*}
	\begin{aligned}
		x_{k+1}=& \proximal_{\Gamma(x;x_{k+1})/L_k}\paran[\Big]{\wh{x}_k - \tfrac{1}{L_k}\braces[\big]{\mathfrak{F}(\wh{x}_k,\xi_k) + \rho_k g'(\wh{x}_k)[U_k + g'(\wh{x}_k)^\top x_{k+1}+\chi (x_{k+1})]_+}}\\
		=& F(x_{k+1}).
	\end{aligned}
\end{equation*}
From the above relation, it is clear that $x_{k+1}$ is a fixed point of the operator $F(\bx)$. 
Next, let us define $\mathcal{T}(\bx)$ such that 
\[
\mathcal{T}(\bx) = \wh{x}_k - \tfrac{1}{L_k}\braces[\big]{\mathfrak{F}(\wh{x}_k,\xi_k) + \rho_k g'(\wh{x}_k)[U_k + g'(\wh{x}_k)^\top \bx+\chi (\bx)]_+},
\]
Also suppose $\mathcal{T}_1= \mathcal{T}(\bx_1),\mathcal{T}_2= \mathcal{T}(\bx_2)$ and let $u_1 = F(\bx_1), u_2 = F(\bx_2)$. Then, one can construct the following relations
\begin{subequations}
	\begin{equation*}
		\partial\tfrac{\chi_0(u_1)}{L_k }+ \partial\tfrac{\chi(u_1)}{L_k}^\top c(\bx_1) + u_1-\mathcal{T}_1 \ni \mathbf{0},
	\end{equation*}
	\begin{equation*}
		\partial\tfrac{\chi_0(u_2)}{L_k }+ \partial\tfrac{\chi(u_2)}{L_k}^\top c(\bx_2) + u_2-\mathcal{T}_2 \ni \mathbf{0}.
	\end{equation*}
\end{subequations}
Consequently, 
\begin{subequations}
	\begin{equation*}
		\partial_{\tfrac{\chi_0+\chi^\top c(\bx_1)}{L_k}} (u_1) \in \mathcal{T}_1-u_1,
	\end{equation*}
	\begin{equation*}
		\partial_{\tfrac{\chi_0+\chi^\top c(\bx_1)}{L_k}} (u_2) \in \mathcal{T}_2-u_2 - \tfrac{\chi'(u_2)}{L_k}^\top(c(\bx_2)-c(\bx_1)),
	\end{equation*}
\end{subequations}
where $\tfrac{\chi'(u_2)}{L_k}$ is a subgradient in $\partial \tfrac{\chi(u_2)}{L_k}$.  Using the fact that $\Gamma(\cdot;\bx_1)/L_k$ is a convex function, then $\partial_{\tfrac{\chi_0+\chi^\top c(\bx_1)}{L_k}}$ is a monotone operator, we have 
\begin{equation*}
	\inner{\mathcal{T}_1-u_1-\mathcal{T}_2+u_2 +  \tfrac{\chi'(u_2)}{L_k}^\top(c(\bx_2)-c(\bx_1))}{u_1-u_2}\geq 0.
\end{equation*}
Hence one can conclude the following inequality
\[
\inner{\mathcal{T}_1-\mathcal{T}_2}{u_1-u_2} + \inner{\tfrac{\chi'(u_2)}{L_k}^\top(c(\bx_2)-c(\bx_1))}{u_1-u_2}\geq \gnorm{u_1-u_2}{}{2}.
\]
By Cauchy-Schwarz inequality, assumptions in \eqref{eq:M_g_1} and definition of $c(\bx)$ in \eqref{eq:prox-operaotr2}, one can write the following 
\begin{equation}\label{eq:proj_1}
	\gnorm{\mathcal{T}_1-\mathcal{T}_2}{}{} + \tfrac{\rho_k M_\chi(M_g+M_\chi)}{L_k}\gnorm{\bx_1-\bx_2}{}{}\geq \gnorm{u_1-u_2}{}{}.
\end{equation}
Moreover, from the definition of $\mathcal{T}(\bx)$, one can have the following relations for $\gnorm{\mathcal{T}_1-\mathcal{T}_2}{}{}$ 
\begin{equation}\label{eq:proj2}
	\begin{aligned}
		\gnorm{\mathcal{T}_1-\mathcal{T}_2}{}{}& = \big\| \bracket[\big]{\wh{x}_k - \tfrac{1}{L_k}\braces[\big]{\mathfrak{F}(\wh{x}_k,\xi_k) + \rho_k g'(\wh{x}_k)[U_k + g'(\wh{x}_k)^\top \bx_1+\chi (\bx_1)]_+}} \\
		&\qquad - \bracket[\big]{\wh{x}_k - \tfrac{1}{L_k}\braces[\big]{\mathfrak{F}(\wh{x}_k,\xi_k) + \rho_k g'(\wh{x}_k)[U_k + g'(\wh{x}_k)^\top \bx_2+\chi (\bx_2)]_+}} \big\|\\
		&= \tfrac{\rho_k}{L_k} \gnorm{g'(\wh{x}_k)\braces{[U_k + g'(\wh{x}_k)^T\bx_2 + \chi(\bx_2)]_+ - [U_k + g'(\wh{x}_k)^T\bx_1+\chi(\bx_1)]_+} }{}{}\\
		&\le \tfrac{\rho_kM_g}{L_k} \gnorm{[U_k + g'(\wh{x}_k)^T\bx_2 + \chi(\bx_2)]_+ - [U_k + g'(\wh{x}_k)^T\bx_1+\chi(\bx_1)]_+}{}{}\\
		&\le \tfrac{\rho_kM_g}{L_k} \gnorm{g'(\wh{x}_k)^T(\bx_2-\bx_1) + \chi(\bx_2)-\chi(\bx_1)}{}{}\\
		&\le \tfrac{\rho_kM_g (M_g + M_\chi)}{L_k} \gnorm{\bx_1-\bx_2}{}{},
	\end{aligned}
\end{equation}
where  the first and third inequalities follow by Assumptions in \eqref{eq:M_g_1}, and second inequality follows by Proposition \ref{prop:a-b-relation}. Therefore, using \eqref{eq:proj_1} and \eqref{eq:proj2}, we have
\[
\gnorm{u_1-u_2}{}{}\leq \tfrac{\rho_k(M_g+M_\chi)^2}{L_k}\gnorm{\bx_1-\bx_2}{}{}.
\]
Taking $L_k\geq 2\rho_k(M_g + M_{\chi})^2$ becomes $F$ a contraction, we note that $x_{k+1}$ is unique fixed point.
Hence, we conclude the proof. $ \openbox $
\subsection{Proof of Theorem \ref{thm:linear-convergence}}
	Note that 
\begin{align*}
	\gnorm{w_{t+1} - x_{k+1}}{}{} &= \gnorm{F(w_t) - x_{k+1}}{}{}\\
	&= \gnorm{F(w_t) - F(x_{k+1})}{}{}\\
	&\le \tfrac{1}{2} \gnorm{w_t - x_{k+1}}{}{},
\end{align*}
where first equality follows by definition of $w_{t+1}$, second equality uses the fact that $x_{k+1}$ is a fixed point of $F$, and final inequality uses contraction property from Lemma \ref{lem:F-contraction}. Exapnding the above recursion, we obtain
\[ \gnorm{w_{T} - x_{k+1}}{}{} \le 2^{-T}\gnorm{w_0 - x_{k+1}}{}{} =  2^{-T}\gnorm{\wh{x}_k - x_{k+1}}{}{},\]
where the last relation uses $w_0 = \wh{x}_k$ as stated in Algorithm \ref{alg:x-s-update}. It is also clear each iteration of Algorithm \ref{alg:x-s-update} requires computing $F(w_t)$ only. Hence, we conclude the proof. $ \openbox $ 
\section{Analytical solution to the proximal operator of an $\ell_1$-norm function under an $\ell_2$-ball constraint}\label{sec:kkt_l1l2}
We provide an explicit solution for Algorithm \ref{alg:implicit} of $\augconex$ method in the first variant of experimented in Section \ref{sec:numerical} where we consider $\ell_1$-norm as a prox-friendly function and project it onto a $\ell_2$-ball with radius $D_x$.
\begin{proposition}\label{prop:sparse}
    Assume the following problem 
    \begin{equation}\label{eq:cons-sparse}
        \min_{\gnorm{x}{}{}\leq D_x}\{\lambda\gnorm{x}{1}{} + \tfrac{1}{2}\gnorm{x-\ddot{x}}{}{2}\},
    \end{equation}
   where $\lambda\geq 0$ and $x,\ddot{x}\in \mathbb{R}^n$. Then we have the corresponding solution to the above problem. 
    \begin{equation}\label{eq:sparse-sol}
        x^{*,i} = x_0^{u,i}\min(1,\tfrac{D_x}{\gnorm{x_0^u}{}{}})\quad \forall i\in [n],
    \end{equation}
    where $ x_0^{u}$ is the solution to the corresponding unconstrained problem in \eqref{eq:cons-sparse} without any set projection assumption and has the following expression
    \begin{equation}\label{eq:x_uncon}
        x_0^{u,i} =\sign (\ddot{x}^i)\max(|\ddot{x}^i|-\lambda,0)\quad \forall i\in [n].
    \end{equation}
    \begin{proof}
        First, considering the equivalent constraint $\gnorm{x}{}{2}\leq D_x^2$ instead of $\gnorm{x}{}{}\leq D_x$, let us write down the KKT conditions corresponding to the Lagrangian relaxation of problem \eqref{eq:cons-sparse}, namely $\mathcal{L}(x,\dot{\mu})$. Note that since Slater's condition holds, we are sure of the existence of the optimal solution. 
         \begin{equation}\label{eq:KKT}
  \text{KKT conditions} =
    \begin{cases}
      \lambda \sign{x^i} + (x^i-\ddot{x}^i) + 2\dot{\mu}x^i = 0 & \text{ Stationarity, } \forall i\in [n] \\
      \dot{\mu} (\gnorm{x}{}{2}-D_x^2) = 0& \text{ Complementary slackness} \\
     \gnorm{x}{}{2}\leq D_x^2  & \text{ Primal Feasibility }\\
     \dot{\mu}\geq 0 & \text{ Dual Feasibility },\\
    \end{cases}       
\end{equation}
        Solving the Stationarity condition of \eqref{eq:KKT} gives us the following solution with respect to $\dot{\mu}$ for all $i\in [n]$
        \begin{equation} \label{eq:uncon}
         x^{u,i}(\dot{\mu}) =
    \begin{cases}
      \tfrac{\ddot{x}^i - \lambda}{2\mu+1} & \text{if } \ddot{x}^i >\lambda \\
      0& \text{if } |\ddot{x}^i|\leq \lambda\\
     \tfrac{\ddot{x}^i + \lambda}{2\mu+1}  & \text{if } \ddot{x}^i <-\lambda ,
    \end{cases}       
\end{equation}
        Next, let us break \eqref{eq:uncon} into two cases. First consider $\mu = 0$. Then, $x^u$ resolves to $x^{u,i}  =x_0^{u,i}$ for all $i\in [n]$. If $x^u$ satisfies primal feasibility then it is the optimal solution of \eqref{eq:cons-sparse}. Otherwise, we have the following solution form for \eqref{eq:cons-sparse}. 
        \begin{equation}\label{eq:x_con-mu-0}
        x^{*,i} = \tfrac{\sign (\ddot{x}^i)\max(|\ddot{x}^i|-\lambda,0)}{\sqrt{\tsum_{i=1}^n \sign (\ddot{x}^i)\max(|\ddot{x}^i|-\lambda,0)}}D_x, \quad \forall i\in [n].
        \end{equation}
        Now, lconsider $\dot{\mu}>0$. From Complementary slackness of \eqref{eq:KKT}, one can say $\gnorm{x^*}{}{} = D_x$. Letting $\dot{\mu} = \tfrac{\sqrt{\tsum_{i=1}^n \sign (\ddot{x}^i)\max(|\ddot{x}^i|-\lambda,0)}}{2D_x} - \tfrac{1}{2} $, we obtain optimal solution $x^*$ as expressed in \eqref{eq:x_con-mu-0}. Note that we know $\sqrt{\tsum_{i=1}^n \sign (\ddot{x}^i)\max(|\ddot{x}^i|-\lambda,0)}>D_x$ since $x^{u}(\dot{\mu})$ is a infeasible solution for $\dot{\mu}>0$. 
    \end{proof}
\end{proposition}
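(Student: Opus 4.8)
The plan is to treat \eqref{eq:cons-sparse} as a convex program and exploit the radial structure induced by the $\ell_2$-ball constraint. First I would observe that the objective $\lambda\gnorm{x}{1}{} + \tfrac12\gnorm{x-\ddot{x}}{}{2}$ is strongly convex, so the minimizer $x^*$ is unique, and since $\zero$ is a Slater point for the (equivalent) constraint $\gnorm{x}{}{2}\le D_x^2$, the KKT conditions are both necessary and sufficient for optimality. Introducing a multiplier $\dot{\mu}\ge 0$ for the squared-norm constraint, the stationarity condition reads
\[
0 \in \lambda\,\partial\abs{x^{*,i}} + (x^{*,i}-\ddot{x}^i) + 2\dot{\mu}\,x^{*,i}, \qquad \forall i\in[n].
\]

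\emph{Key observation.} Once $\dot{\mu}$ is fixed, this inclusion is separable across coordinates, and solving each scalar inclusion gives exactly the soft-thresholding map shrunk by the factor $1/(1+2\dot{\mu})$; that is, $x^{*,i} = \tfrac{1}{1+2\dot{\mu}}\,\sign(\ddot{x}^i)\max(\abs{\ddot{x}^i}-\lambda,0) = \tfrac{1}{1+2\dot{\mu}}\,x_0^{u,i}$. This is the crux of the argument: the multiplier enters only as a uniform positive scalar, so the optimal $x^*$ is always a nonnegative multiple of the unconstrained soft-thresholding solution $x_0^u$, i.e., it points in the same direction and can differ only in length. Hence the constrained minimizer is obtained by \emph{radially projecting} $x_0^u$ onto the ball.

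I would then close the argument by complementary slackness, splitting into two cases. If $\gnorm{x_0^u}{}{}\le D_x$, the choice $\dot{\mu}=0$ is dual-feasible and the resulting $x^*=x_0^u$ is primal-feasible, hence optimal; here $\min(1,D_x/\gnorm{x_0^u}{}{})=1$ and \eqref{eq:sparse-sol} holds. If instead $\gnorm{x_0^u}{}{}>D_x$, then $\dot{\mu}=0$ would yield the infeasible point $x_0^u$, so $\dot{\mu}>0$ is forced, and complementary slackness gives $\gnorm{x^*}{}{}=D_x$. Substituting $x^*=x_0^u/(1+2\dot{\mu})$ into this equality yields $1+2\dot{\mu}=\gnorm{x_0^u}{}{}/D_x>1$, a valid positive multiplier, whence $x^*=(D_x/\gnorm{x_0^u}{}{})\,x_0^u$, again matching \eqref{eq:sparse-sol}. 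Combining the two cases produces the unified formula.

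The main obstacle I anticipate is the careful scalar analysis of the subdifferential inclusion at $x^{*,i}=0$: one must verify that the soft-thresholding expression is genuinely the unique solution of $0\in\lambda\,\partial\abs{t}+(1+2\dot{\mu})t-\ddot{x}^i$ in all three regimes ($\ddot{x}^i>\lambda$, $\abs{\ddot{x}^i}\le\lambda$, $\ddot{x}^i<-\lambda$), including checking that the zero branch is consistent with $\partial\abs{0}=[-1,1]$. Everything else—uniqueness via strong convexity, sufficiency of KKT via convexity plus Slater, and the one-dimensional determination of $\dot{\mu}$—is routine.
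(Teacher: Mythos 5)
Your proposal is correct and follows essentially the same route as the paper's proof: KKT conditions with a multiplier $\dot{\mu}$ on the squared-ball constraint, coordinate-wise stationarity yielding soft-thresholding scaled by $1/(1+2\dot{\mu})$, and a complementary-slackness split between $\dot{\mu}=0$ and $\dot{\mu}>0$. If anything, your write-up is slightly tighter: using $\partial\abs{x^{*,i}}$ instead of $\sign(x^{*,i})$ makes the stationarity condition rigorous at $x^{*,i}=0$, and your identity $1+2\dot{\mu}=\gnorm{x_0^u}{}{}/D_x$ gives the multiplier cleanly, whereas the paper's displayed choice of $\dot{\mu}$ (with $\sqrt{\tsum_{i=1}^n \sign(\ddot{x}^i)\max(\abs{\ddot{x}^i}-\lambda,0)}$ where $\gnorm{x_0^u}{}{}$ should appear) contains a typo.
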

\bibliographystyle{abbrvnat}
\bibliography{submit_arxiv}

\end{document}